\theoremstyle{plain}
\newtheorem{theorem}{Theorem}[section]
\newtheorem{lemma}{Lemma}[section]
\newtheorem{prop}{Proposition}[section]
\theoremstyle{definition}
\newtheorem{definition}{Definition}[section]
\newtheorem{remark}{Remark}[section]
\newcommand{\PP}{\mathbb{P}}
\newcommand{\CC}{\mathbb{C}}
\newcommand{\QQ}{\mathbb{Q}}
\newcommand{\calO}{\mathcal{O}}
\newcommand{\calN}{\mathcal{N}}
\newcommand{\calB}{\mathcal{B}}
\newcommand{\calR}{\mathcal{R}}
\newcommand{\calM}{\mathcal{M}}
\newcommand{\mm}{\mathbb{Z}/2\mathbb{Z}}
\newcommand{\PGL}{\mathbb{P}\mathrm{GL}}
\newcommand{\SL}{\mathrm{SL}}
\newcommand{\SU}{\mathrm{SU}}
\newcommand{\GL}{\mathrm{GL}}
\newcommand{\diag}{\mathrm{diag}}
\newcommand{\Bl}{\mathrm{Bl}}
\newcommand{\rk}{\mathrm{rk}}
\newcommand{\Stab}{\mathrm{Stab}}
\newcommand{\Aut}{\mathrm{Aut}}
\title[Cohomology of the moduli space of non-hyperelliptic genus four curves]{Cohomology of the moduli space of non-hyperelliptic genus four curves}
\author[M. Fortuna]{Mauro Fortuna}
\address{Institut f\"{u}r Algebraische Geometrie, Leibniz Universit\"{a}t Hannover, Welfengarten 1, 30167 Hannover, Germany.}
\email{fortuna@math.uni-hannover.de}
\begin{document}
	
	\begin{abstract}
		We compute the intersection Betti numbers of the GIT model of the moduli space of Brill-Noether-Petri general curves of genus 4. This space was shown to be the final non-trivial log canonical model for the moduli space of stable genus four curves, under the Hassett-Keel program. The strategy of the cohomological computation relies on a general method developed by Kirwan to calculate the cohomology of GIT quotients of projective varieties, based on the equivariantly perfect stratification of the unstable points studied by Hesselink and others and a partial resolution of singularities.
	\end{abstract}
	\maketitle
	
	\section{Introduction}
Moduli spaces of curves and their geometrically meaningful compactifications are a central topic in algebraic geometry. In particular, one wants to understand the topology of these spaces. From that perspective, the purpose of this paper is to compute the intersection Betti numbers of the moduli space of non-hyperelliptic Brill-Noether-Petri-general curves of genus 4. The canonical model of such curves is a complete intersection of a smooth quadric and a cubic surface in projective space. This moduli space hence carries a natural compactification:
\[ M:= \PP H^0(\PP^1\times\PP^1, \calO_{\PP^1\times\PP^1}(3, 3))  /\!\!/ \text{Aut}(\PP^1\times\PP^1), \]
as GIT quotient for the space of curves of bidegree $(3,3)$ on $\PP^1\times \PP^1$ under automorphism.

The variety $M$ is a projective birational compactification for the moduli space of genus $4$ curves $M_4$, which is the coarse moduli space associated to the moduli stack $\calM_4$. The study of the birational models for $M_g$ is the subject of the Hassett-Keel program (see \cite{Has05}), which aims at giving a modular interpretation of the canonical model of the Deligne-Mumford compactification $\overline{M_g}$. The genus $4$ case has attracted a lot of attention as non-trivial instance of the aforementioned program. Specifically, Fedorchuk \cite{Fed12} proved that $M$ is the final non-trivial log canonical model for $ \overline{M_4} $, namely
\[ M\cong \overline{M_4}(\alpha):= \mathrm{Proj}\bigoplus_{n\geq 0} H^0(n(K_{\overline{M_4}}+\alpha \delta)), \quad \alpha \in \left( \frac{8}{17}, \frac{29}{60} \right]\cap \QQ,  \]
where $ \delta \subset \overline{M_4} $ is the boundary divisor. In \cite{CMJL12} and \cite{CMJL14}, Casalaina-Martin, Jensen and Laza described the last steps of the Hassett-Keel program for log minimal models of $ \overline{M_4} $, arising as VGIT quotients of the parameter space of $ (2,3) $ complete intersections. On the other hand, Hassett, Hyeon and Lee (see \cite{HH09}, \cite{HH13} and \cite{HL14}) proved that the program starts with a divisorial contraction, followed by a flip and a small contraction and gave a modular interpretation of the resulting spaces. In conclusion, most of the Hassett-Keel program for genus $4$ is currently known. From our perspective, the salient point is the two ends of program, namely the Deligne-Mumford compactification $\overline{M_g}$ and the GIT quotient for $(3,3)$ curves.

We are interested in examining these spaces from a cohomological point of view. The study of the cohomology of moduli spaces of curves is a subject of great interest in algebraic geometry (see e.g. \cite{Mum83} and \cite{FP15}). For genus 4, one has a complete understanding of the rational cohomology of the Deligne-Mumford compactification $ \overline{M_4} $ due to Bergstr\"{o}m-Tommasi in \cite{BT07}, and that of $ M_4 $ by Tommasi in \cite{Tom05}. The purpose of this paper is to compute the cohomology at the other end of the Hassett-Keel program, namely the Betti numbers of $M$. We want to point out that the cohomology of $M$ and $\overline{M_4}$ are in principle related by the wall crossing along the Hassett-Keel program: we plan to explore this relation in more detail in a future project.

The strategy to compute the intersection Betti numbers of $ M $ relies on a general procedure developed by Kirwan to calculate the cohomology of GIT quotients (see \cite{Kir84}, \cite{Kir85}, \cite{Kir86}). The crucial step of that method consists of the construction of a partial desingularization $ \widetilde{M} \rightarrow M$, known as \textit{Kirwan blow-up}, having only finite quotient singularities, obtained by successively blowing-up the loci parametrising strictly polystable points in the parameter space. Then one is to compute the Hilbert-Poincar\'{e} polynomial of $\widetilde{M}$ and descend back to the GIT quotient $M$ using the \textit{Decomposition Theorem}. 

Examples of application of Kirwan's method are the topological descriptions of the moduli space of points on the projective line (\cite[\S 8]{MFK94}), of $ K3 $ surfaces of degree 2 (\cite{Kir88}) and of hypersurfaces in $ \PP^n $ (\cite{Kir89}), with explicit complete computations only in the case of plane curves up to degree $ 6 $, cubic and quartic surfaces. More recently, the procedure has been applied to compactifications of the moduli space of cubic threefolds (\cite{CMGHL}).

Our result is summarised by the following:

\begin{theorem}\label{thm:main}
	The intersection Betti numbers of $ M $ and the Betti numbers of the Kirwan blow-up $ \widetilde{M} $ are as follows:
	\begin{center}
		\begin{tabular}{r|cccccccccc}
	i&0&2&4&6&8&10&12&14&16&18\\ \hline \rule{0pt}{2.5ex}
	$\dim IH^i(M, \QQ)$&1 &1 &2 &2 &3 &3 &2 &2 &1 &1\\ \rule{0pt}{2.5ex}
	$\dim H^i(\widetilde{M}, \QQ)$& 1 & 4 & 7 & 11 & 14 & 14 & 11 & 7 & 4 &1  
	\end{tabular}
	\end{center}
	while all the odd degree (intersection) Betti numbers vanish.
\end{theorem}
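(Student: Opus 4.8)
The plan is to follow Kirwan's programme step by step, so the computation splits into two largely independent halves: first computing the rational cohomology of the Kirwan blow-up $\widetilde{M}$, and then extracting $IH^*(M,\QQ)$ from it by means of the Decomposition Theorem applied to the partial desingularisation $\widetilde{M}\to M$.

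\textbf{Equivariant cohomology of the semistable locus.} Let $X=\PP H^0(\PP^1\times\PP^1,\calO(3,3))$ and $G=\Aut(\PP^1\times\PP^1)$, with maximal compact $K$. First I would set up the linearised $G$-action, fix a maximal torus, and write down the HKKN (Hesselink--Kempf--Kirwan--Ness) stratification $X=X^{ss}\sqcup\bigsqcup_\beta S_\beta$ indexed by the (finitely many) rational one-parameter subgroups $\beta$ destabilising points of $X$; this amounts to a concrete instability analysis of bidegree $(3,3)$ forms, classifying the $\beta$-unstable strata by their weight polytopes. Because this stratification is $K$-equivariantly perfect, one gets
\[
P_t^G(X) \;=\; P_t^G(X^{ss}) \;+\; \sum_{\beta\neq 0} t^{2d(\beta)}\,P_t^{G}(S_\beta),
\]
with $d(\beta)$ the codimension of $S_\beta$, and each $P_t^G(S_\beta)$ reduces by retraction to the equivariant cohomology of a space acted on by $\mathrm{Stab}(\beta)$. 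Since $P_t^G(X)=P_t(BG)\cdot P_t(X)=\frac{1}{(1-t^4)}\cdot\frac{1-t^{2\dim X+2}}{1-t^2}$ (using $H^*(B\mathrm{SO}(3,\CC)^2)$ and $P_t(\PP^{15})$), this determines $P_t^G(X^{ss})$ once all the strata are enumerated.

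\textbf{From $X^{ss}$ to $\widetilde{M}$.} The semistable locus has strictly-semistable points, so $H_G^*(X^{ss})$ is not yet the cohomology of a space with finite quotient singularities. Following Kirwan, I would perform the blow-ups along the (proper transforms of the) loci of points with positive-dimensional stabiliser --- here the triple conic, the $D$-curves and the $A$-curves announced in the introduction --- to obtain $\widetilde X^{ss}$ with $\widetilde M=\widetilde X^{ss}/\!\!/G$ having only finite quotient singularities, hence $P_t(\widetilde M)=P_t^G(\widetilde X^{ss})$. Each blow-up changes the equivariant Poincar\'e series by an explicit correction term governed by the normal bundle to the blown-up locus and the cohomology of the stabiliser group acting on it (a projective-bundle / equivariant-Euler-class computation, exactly as in \cite{Kir86}, \cite{Kir89}); iterating over the three loci yields $P_t(\widetilde M)$, which I expect to match the second row of the table.

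\textbf{Descent to $IH^*(M)$.} Finally I would compare $\widetilde M$ with $M$. The map $\pi\colon\widetilde M\to M$ is a composition of blow-downs; since $M$ has at worst finite quotient (hence $\QQ$-homology-manifold, rationally smooth) singularities away from the blown-up loci, $IH^*(M,\QQ)=H^*(M,\QQ)$ there, and the Decomposition Theorem expresses $IH^*(M,\QQ)$ as a direct summand of $H^*(\widetilde M,\QQ)$ with the complementary summands supported on the exceptional loci and computable from the local structure (the cohomology of the fibres of $\pi$, i.e. of weighted projective spaces / quotients attached to the $D$- and $A$-curve loci and the triple conic). Subtracting these contributions from $P_t(\widetilde M)$ gives the first row of the table, and the vanishing of odd Betti numbers follows because every space in sight (projective bundles over smooth bases, quotients by groups with only even cohomology) has only even cohomology. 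The main obstacle I anticipate is the instability analysis and the bookkeeping of the unstable strata and of the (several, nested) Kirwan blow-up centres: correctly identifying the stabilisers, the normal-bundle weights and the resulting correction terms for curves with prescribed $A$- and $D$-type singularities is where essentially all the work --- and all the opportunities for error --- lie; by contrast the final Decomposition-Theorem bookkeeping is comparatively formal once the local fibre structure of $\pi$ is pinned down.
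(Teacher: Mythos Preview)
Your outline is essentially the paper's own proof: HKKN stratification to get $P_t^G(X^{ss})$, Kirwan blow-up along the orbits of the triple conic, the $D$-curves and the $A$-curves with the associated main/extra correction terms to obtain $P_t(\widetilde M)$, and then Kirwan's blow-down formula (Decomposition Theorem) to recover $IP_t(M)$. One small slip to flag: your formula $P_t(BG)=\tfrac{1}{1-t^4}$ is off --- after taking $\mathbb{Z}/2$-invariants for the swap of the two $\SL(2)$ (equivalently $\mathrm{SO}(3)$) factors one gets $P_t(BG)=\tfrac{1}{(1-t^4)(1-t^8)}$, so $P_t^G(X)=\dfrac{1+t^2+\cdots+t^{30}}{(1-t^4)(1-t^8)}$; everything downstream depends on getting this right.
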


The structure of the paper reflects the steps of Kirwan's machinery. In Section \S \ref{sec:background} we recall the construction of $M$ as GIT quotient $X/\!\!/G$ together with the geometrical description of the semistable and stable loci. In Section \S \ref{sec:strati}, we calculate the equivariant Hilbert-Poincar\'{e} polynomial of the semistable locus $X^{ss}$ in the parameter space of $(3,3)$ curves (see Proposition \ref{prop:seriesss}). This is done by computing the \textit{Hesselink-Kempf-Kirwan-Ness (HKKN) stratification} of the unstable locus, naturally associated to the linear action of $ G $ on the parameter space $ X $, followed by an excision type argument. In Section \S \ref{sec:blow}, we explicitly construct the partial desingularization $ \widetilde{M} \rightarrow M $, by blowing-up three $ G $-invariant loci in the GIT boundary of $ M $, corresponding to strictly polystable curves (see Definition \ref{def:Mtilde}). These subspaces are given by triple conics in $ \PP^1 \times \PP^1 $, curves with two $ D_4 $ or two $ D_8 $ singularities, called \textit{D-curves}, and curves with two singularities of type $ A_5 $, called \textit{A-curves}. Section \S \ref{sec:cohomology} is devoted to the computation of the rational Betti numbers of the Kirwan blow-up $ \widetilde{M} $ (see Theorem \ref{thm:cohoblow}). Here the correction terms arising from the modification process $\widetilde{M} \rightarrow M$ are divided into a main and an extra contribution: the former takes into account the geometry of the centres of the blow-ups and the latter the action of $G$ on the exceptional divisors. In the end, the intersection Betti numbers of $ M $ are computed in Section \S \ref{sec:intersection}, as an application of the \textit{Decomposition Theorem} (cf. \cite{BBD82}) to the blow-down operations at the level of parameter spaces (see Theorem \ref{thm:intM}). We conclude with a geometric interpretation of some Betti numbers, via a description of the classes of curves in the GIT boundary which generate some cohomology groups.

\subsection*{Notation and conventions} We work over the field of complex numbers and all the cohomology and homology theories are taken with \textit{rational} coefficients. The intersection cohomology will be always considered with respect to the middle perversity (see \cite{KW06} for an excellent introduction). For any topological group $ G $, we will denote by $ G^0 $ the connected component of the identity in $ G $ and by $ \pi_0(G):=G/G^0 $ the finite group of connected components of $ G $. The universal classifying bundle of $ G $ will be denoted by $ EG\rightarrow BG $. If $ G $ acts on a topological space $ Y $, its equivariant cohomology (see \cite{AB83}) will be defined to be $ H^*_G(Y):=H^*(Y\times_G EG) $. The Hilbert-Poincar\'{e} series is denoted by 
\[ P_t(Y):=\sum_{i\geq 0}t^i \dim H^i(Y), \]
and analogously for the intersection and equivariant cohomological theories. If $ F $ is a finite group acting on a vector space $ A $, then $ A^F $ will indicate the subspace of elements in $ A $ fixed by $ F $.

\subsection*{Acknowledgements} I wish to thank my advisor Klaus Hulek who proposed me this problem, for many helpful discussions and suggestions. I am also grateful to Yano Casalaina-Martin, Radu Laza and Orsola Tommasi for useful conversations and correspondence and to all the authors of \cite{CMGHL} for kindly sharing a preprint version of it with me. Finally, I would like to thank the anonymous referee whose comments improved the presentation of this paper.

\section{Background on GIT for $(3,3)$ curves in $\PP^1\times \PP^1$}\label{sec:background}

A smooth non-hyperelliptic curve of genus 4 is realised by the canonical embedding as a complete intersection of a quadric and a cubic surface in the projective space $ \PP^3 $. If the quadric is smooth, the curve is said to be \textit{Petri-general} and thus defines a point in the complete linear system
 \begin{equation}
 X:=\PP H^0(\PP^1\times\PP^1, \calO_{\PP^1\times\PP^1}(3, 3))=\PP(\text{Sym}^3(\CC^2)^{\vee}\otimes\text{Sym}^3(\CC^2)^{\vee})\cong\PP^{15}
 \end{equation}
  of curves of bidegree $(3, 3)$ on $\PP^1\times\PP^1$. Since every such curve admits a unique pair of $ g^1_3 $ systems, it follows that these curves are abstractly isomorphic as algebraic curves if and only if they lie in the same $ \Aut(\PP^1 \times \PP^1) $-orbit.
  
  We consider the reductive group $ G:=(\SL(2, \CC)\times \SL(2, \CC))\rtimes \mm $, which is only isogenus to $ \Aut(\PP^1 \times \PP^1)=\PP\mathrm{O(4, \CC)} $, but has the advantage to define a linearisation of the hyperplane bundle of $ X $. We will work with this linearisation throughout all the results. The action of $ G $ on $ X $ is induced by the natural action of $ \SL(2, \CC)\times \SL(2, \CC) $ on $ \PP^1\times\PP^1 $ via change of coordinates and the $ \mm $-extension interchanges the rulings of $ \PP^1\times\PP^1 $. Geometric Invariant Theory \cite{MFK94} provides a good categorical projective quotient 
  \begin{equation}
  M:=X /\!\!/_{\calO_X(1)} G,
  \end{equation}
whose cohomology we aim to compute. In particular, intersection cohomology satisfies Poincar\'{e} duality, allowing us to compute the Betti numbers up to dimension $ 9=\dim M $. However, we prefer to carry out the computations in all dimensions for the sake of completeness, and to report also the results \textit{mod $ t^{10} $} for the sake of readability.

Firstly, we need a description of the semistability condition for non-hyperelliptic Petri-general curves of genus 4. This is provided by the following
\begin{theorem} \cite[2.2]{Fed12}
	A curve $ C $ is unstable (i.e. non-semistable) for the action of $ (\SL(2, \CC)\times \SL(2, \CC))\rtimes \mm $ on $ \PP H^0(\PP^1\times\PP^1, \calO_{\PP^1\times\PP^1}(3, 3)) $ if and only if one of the following holds:
	\begin{enumerate}[(i)]
		\item $ C $ contains a double ruling;
		\item $ C  $ contains a ruling and the residual curve $ C' $ intersects this ruling in a unique point that is also a singular point of $ C' $.
	\end{enumerate}
\end{theorem}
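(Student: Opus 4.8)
\medskip

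The plan is to establish the equivalence directly from the Hilbert--Mumford numerical criterion, using the weight square of Figure~\ref{fig:1} as the only combinatorial input. A curve $C=V(F)$ fails to be $G$-semistable precisely when some one-parameter subgroup destabilises it; every such subgroup is conjugate into the maximal torus $T$, and after applying the Weyl group $W(G)=D_8$ — which already contains the ruling-interchanging involution, so that the relevant chamber is the grey region of Figure~\ref{fig:1} rather than a whole quadrant — we may assume its primitive generator $\beta$ lies in the closed positive Weyl chamber $\bar{\mathfrak t}_+$. Writing $\mathrm{supp}\,F$ for the set of indices $(i,j)$ whose monomial $x_0^ix_1^{3-i}y_0^jy_1^{3-j}$ occurs in $F$, the criterion then reads, via the weight convention \eqref{eq:weight}: $C$ is non-semistable iff there exist $g\in G$ and $0\neq\beta\in\bar{\mathfrak t}_+$ such that $\alpha_{ij}\cdot\beta>0$ for every $(i,j)\in\mathrm{supp}(gF)$, i.e.\ iff, after a change of coordinates, $\mathrm{supp}\,F$ lies in an open half-plane through the origin cut out by some $\beta\in\bar{\mathfrak t}_+$.

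The first task is to list, up to $W(G)$, the maximal supports of this kind. The walls of such half-planes are the lines through the origin orthogonal to weights, and inside $\bar{\mathfrak t}_+$ there is exactly one interior such wall, the ray through $(3,-1)$ (orthogonal to the weight $(1,3)$), which splits $\bar{\mathfrak t}_+$ into two open regions. Using \eqref{eq:weight}, a generic $\beta$ in the region bordering the horizontal axis gives the support condition $\{\,i\le 1\,\}$, while a generic $\beta$ in the other region — for instance $\beta=(2,-1)$ — gives
\[
S_B=\{(0,0),(0,1),(0,2),(0,3),(1,1),(1,2),(1,3),(2,3)\}.
\]
Hence, up to the action of $G$, a curve is non-semistable iff $\mathrm{supp}\,F\subseteq\{i\le 1\}$ or $\mathrm{supp}\,F\subseteq S_B$; as a consistency check, each of the nonzero strata of Table~\ref{tab:1} lies in the $G$-sweep of one of these two loci.

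It remains to translate the two conditions and to reverse the implications. The condition $\mathrm{supp}\,F\subseteq\{i\le 1\}$ means $x_1^2\mid F$, i.e.\ $C$ contains the ruling $\{x_1=0\}$ with multiplicity $\ge 2$: this is case (i), and conversely a double ruling can be moved to $\{x_1=0\}$ and is then destabilised by $\beta=(1,0)$. If $\mathrm{supp}\,F\subseteq S_B$ then $x_1\mid F$, say $F=x_1F'$ with $F'$ of bidegree $(2,3)$; restricting to $R=\{x_1=0\}$ annihilates every monomial of $F'$ except the one coming from $(2,3)$, so $F'|_R=c\,y_0^3$. If $c=0$ then $x_1^2\mid F$ and we are back in case (i); if $c\neq 0$, the residual curve $C'=V(F')$ meets $R$ only at $P=([1:0],[0:1])$, and since the monomials $x_0^2y_1^3$, $x_0^2y_0y_1^2$, $x_0x_1y_1^3$ do not occur in $S_B$, the expansion of $F'$ at $P$ has vanishing constant and linear parts, so $P$ is a singular point of $C'$: this is case (ii). Conversely, if $C$ contains a ruling (moved to $\{x_1=0\}$) whose residual $C'$ meets it in a single point (moved to $P$, forcing $F'|_R$ to be a perfect cube $c\,y_0^3$, whence the coefficients of $x_0^2y_1^3$, $x_0^2y_0y_1^2$, $x_0^2y_0^2y_1$ vanish) which is singular on $C'$ (forcing the coefficient of $x_0x_1y_1^3$ to vanish as well), one reads off $\mathrm{supp}\,F\subseteq S_B$, so $C$ is destabilised by $\beta=(2,-1)$. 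Combining the two directions gives the stated equivalence.

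I expect the delicate point to be the completeness of the enumeration of maximal destabilising supports together with the certainty that neither of them produces a configuration outside (i)--(ii); in particular one must rule out, a priori, a $\beta$ that would force $C$ to contain a ruling meeting its residual curve in a \emph{smooth} point, or in two distinct points. The explicit weight diagram of Figure~\ref{fig:1} and the reduction by the full Weyl group $D_8$ (rather than only $\mm\times\mm$) are exactly what keep the casework down to the two loci $\{i\le 1\}$ and $S_B$.

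\medskip
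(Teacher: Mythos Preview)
The paper does not give its own proof of this statement: it is quoted verbatim from Fedorchuk's paper \cite[2.2]{Fed12} and used as input for the subsequent analysis of the strictly polystable locus. So there is no ``paper's proof'' to compare against.

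That said, your argument is a correct and self-contained proof. The reduction to a one-parameter subgroup in $\bar{\mathfrak t}_+$ via $W(G)=D_8$ is legitimate (the extra $\mathbb Z/2$ normalises $T$, so it may be absorbed into the $G$-translate of $F$), and the identification of the unique interior wall perpendicular to the weight $(1,3)$ is right: among the sixteen weights, only $\pm(1,3)$ have their orthogonal line meeting the open chamber $\{a>-b>0\}$. This cleanly yields the two maximal destabilising supports $\{i\le 1\}$ and $S_B$. Your translation of $S_B$ into condition (ii) is also accurate: writing $F=x_1F'$ with $F'$ of bidegree $(2,3)$, restriction to $\{x_1=0\}$ kills every monomial except the one coming from $(2,3)$, and in the affine chart $(u,v)=(x_1/x_0,\,y_0/y_1)$ at $P=([1{:}0],[0{:}1])$ the missing indices $(2,0),(2,1),(1,0)$ correspond exactly to the constant and linear terms of $F'$, forcing $P$ to be singular on $C'$. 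The converse bookkeeping (the vanishing of the coefficients at $(2,0),(2,1),(2,2)$ from the ``unique intersection point'' hypothesis, and at $(1,0)$ from singularity of $C'$) matches the complement of $S_B$ on the nose.

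One cosmetic remark: in your closing sentence you describe the monomials $x_0^2y_1^3$, $x_0^2y_0y_1^2$, $x_0x_1y_1^3$ as ``not in $S_B$''; strictly speaking $S_B$ indexes monomials of $F$, not of $F'$, so it would be cleaner to say that the corresponding indices $(2,0),(2,1),(1,0)$ (for $F'$), equivalently the monomials $x_0^2x_1y_1^3$, $x_0^2x_1y_0y_1^2$, $x_0x_1^2y_1^3$ of $F$, lie outside $S_B$. This does not affect correctness.
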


The GIT boundary $ M \smallsetminus M^{s} $ consisting of strictly polystable points is described by the following

\begin{theorem}\label{thm:polystable} \cite[\S 2.2]{Fed12} \cite[3.7]{CMJL14} 
	The strictly polystable curves for the action of $ (\SL(2, \CC)\times \SL(2, \CC))\rtimes \mm $ on $ \PP H^0(\PP^1\times\PP^1, \calO_{\PP^1\times\PP^1}(3, 3)) $ fall into four categories:
	\begin{enumerate}[(i)]
		\item Triple conics;
		\item Unions of a smooth double conic and a conic that is nonsingular along the double conic. These form a one-dimensional family;
		\item Unions of three conics meeting in two $ D_4 $ singularities. These form a two-dimensional family; 
		\item Unions of two lines of the same ruling, meeting the residual curve in two $ A_5 $ singularities.
	\end{enumerate}
\end{theorem}

\section{Equivariant stratification and Hilbert-Poincar\'{e} series}
\label{sec:strati}
In this Section, we discuss the first step of Kirwan's method to compute the cohomology of GIT quotients. It consists of an equivariant stratification of the parameter space measuring the instability of every point under the group action (cf. Theorem \ref{thm:strata}). This stratification turns out to be perfect, in the sense that the Betti numbers of all strata sum up to the cohomology of the whole parameter space (cf. Theorem \ref{thm:equi}). We then apply these results to our case of Brill-Noether-Petri-general curves of genus 4, and we obtain in Proposition \ref{prop:seriesss} the equivariant Betti numbers of the semistable locus.

\subsection{The HKKN stratification}\label{subsec:strata}
From the results in \cite{Kir84}, the first step in Kirwan's procedure is to consider the \textit{Hesselink-Kempf-Kirwan-Ness (HKKN) stratification} of the parameter space, which, from a symplectic viewpoint, coincides to the Morse stratification for the norm-square of an associated moment map. 

In general, let $ X\subset \PP^n $ be a complex projective manifold, acted on by a complex reductive group $ G $, inducing a linearisation on the very ample line bundle $ L=\calO_{\PP^n}(1)|_{X} $. We pick a maximal compact subgroup $ K \subset G $, whose complexification gives $ G $, and a maximal torus $ T\subset G $, such that $ T\cap K $ is a maximal compact torus of $ K $. Before describing the stratification, we need also to fix an inner product together with the associated norm $ \| .\| $ on the dual Lie algebra $ \mathfrak{t}^{\vee}:=\mathrm{Lie}(T\cap K)^{\vee} $, e.g. the Killing form, invariant under the adjoint action of $ K $.
\begin{theorem}\label{thm:strata}\cite{Kir84}
	In the above setting, there exists a natural stratification of $ X$
	\begin{equation}
	X=\bigsqcup_{\beta \in \calB}S_\beta
	\end{equation}
	by $ G $-invariant locally closed subvarieties $ S_{\beta} $, indexed by a finite partially ordered set $ \calB\subset \mathrm{Lie}(T\cap K) $ such that the minimal stratum $ S_0=X^{ss} $ is the semistable locus of the action and the closure of $ S_{\beta} $ is contained in $ \bigcup_{\gamma \geq \beta} S_{\gamma}$, where $ \gamma \geq \beta $ if and only if $ \gamma = \beta $ or $ \|\gamma \| > \|\beta \| $. 
\end{theorem}

We briefly sketch the construction of the strata appearing in the previous Theorem \ref{thm:strata} (see \cite{Kir84} for a detailed description). Let $ \{ \alpha_0,..., \alpha_n \}\subset \mathfrak{t}^{\vee} $ be the weights of the representation (a.k.a. the linearisation) of $ G $ on $ \CC^{n+1} $ and identify $ \mathfrak{t}^{\vee} $ with $ \mathfrak{t} $ via the invariant inner product. After choosing a positive Weyl chamber $ \mathfrak{t}_{+} $, an element $ \beta\in \bar{\mathfrak{t}}_{+} $ belongs to the indexing set $ \calB $ of the stratification if and only if $ \beta $ is the closest point to the origin of the convex hull of some non-empty subset of $ \{ \alpha_0,..., \alpha_n \} $. We define $ Z_{\beta} $ to be the linear section of $ X $
\[ Z_{\beta}:=\{(x_0:...:x_n)\in X: x_i=0 \ \mathrm{if} \ \alpha_i.\beta\neq \|\beta \|^2 \}. \]
The stratum indexed by $ \beta $ is then
\[ S_{\beta}:=G\cdot \bar{Y}_{\beta}\smallsetminus \bigcup_{\|\gamma\|>\|\beta\|} G\cdot \bar{Y}_{\gamma}, \]
where
\[ \bar{Y}_{\beta}:=\{ (x_0:...:x_n)\in X: x_i=0 \ \mathrm{if} \ \alpha_i.\beta< \|\beta \|^2 \}. \]

The heart of Kirwan's results in \cite{Kir84} is the proof that the equivariant Betti numbers of the strata sum up to the cohomology of the whole space.

\begin{theorem}\label{thm:equi} \cite[8.12]{Kir84}
	The stratification $ \{ S_\beta \}_{\beta \in \calB} $, constructed in Theorem \ref{thm:strata}, is $ G $-equivariantly perfect, namely it holds
	\begin{equation}
	P_{t}^{G}(X^{ss})=P_{t}^{G}(X)-\sum_{0\neq\beta \in \calB }t^{2 \mathrm{codim}(S_{\beta})}P_{t}^{G}(S_\beta).
	\end{equation}
\end{theorem}
\begin{remark}\label{rmk:unstable}
	If we denote by $ \Stab \beta \subset G $ the stabiliser of $ \beta \in \mathfrak{t} $ under the adjoint action of $ G $, the equivariant Hilbert-Poincar\'{e} series of each stratum is 
	\[ P_t^G(S_{\beta})=P_t^{\Stab \beta}(Z_{\beta}^{ss}), \]
	where $ Z_{\beta}^{ss} $ is the set of semistable points of $ Z_{\beta} $ with respect to an appropriate linearisation of the action of $ \Stab \beta $ (cf. \cite[8.11]{Kir84}).
\end{remark}
\subsection{Stratification for $ (3, 3) $ curves in $ \PP^1 \times \PP^1 $}
We now come back to our case described in the Section \S \ref{sec:background}. We apply Kirwan's results of the previous subsection to prove the following: 

\begin{prop}\label{prop:seriesss} The $ G$-equivariant Hilbert-Poincar\'{e} series of the semistable locus is 
	\begin{align*}
	P_{t}^{G}(X^{ss})&=\dfrac{1+t^2+t^4+t^6+2t^8+2t^{10}+t^{12}-t^{14}-t^{16}-t^{18}-t^{20}-t^{22}}{1-t^4}\\
	&\equiv P_{t}^{G}(X)\equiv  1+t^2+2t^4+2t^6+4t^8 \ \mathrm{mod} \ t^{10}. 
	\end{align*}
\end{prop}

We need to start computing the equivariant Hilbert-Poincar\'{e} series $ P_{t}^{G}(X) $. Since $ X $ is compact, its equivariant cohomology ring is the invariant part under the action of $ \pi_0(G)=\mm $ of $ H^*_{G^0}(X) $, which splits into the tensor product $ H^*(BG^0)\otimes H^*(X) $ (see \cite[8.12]{Kir84}). Then
\begin{align*}
H^*_G(X)&=H^*_{G^0}(X)^{\mm}\\
&=(H^*(B(\SL(2, \CC)\times \SL(2, \CC)))\otimes H^*(\PP^{15}))^{\mm}\\
&=(\QQ[c_1, c_2]\otimes \QQ[h]/(h^{16}))^{\mm}.
\end{align*}
In fact $ H^*(B\SL(2, \CC))\cong\QQ[c] $, where $ c $ has degree 4, and $ H^*(\PP^n)=\QQ[h]/(h^{n+1}) $, with deg($ h $)=2. The extension $ \mm $ acts by interchanging $ c_1 $ and $ c_2 $, while it fixes the hyperplane class $ h\in H^2(\PP^{15}) $. Therefore the ring of invariants is generated by $ c_1+c_2$, $ c_1c_2 $ and $ h $:
\[ H^*_G(X)=\QQ[c_1+c_2, c_1c_2]\otimes \QQ[h]/(h^{16}). \]
Since deg($ c_1+c_2 $)=4 and deg($ c_1c_2 $)=8, we have:
\begin{align}\label{eq:PGX}
	 P_t^G(X)&=\frac{1+t^2+...+t^{30}}{(1-t^4)(1-t^8)}\\
	 &\equiv 1+t^2+2t^4+2t^6+4t^8 \  \text{mod} \ t^{11}. \nonumber
\end{align}

According to Theorem \ref{thm:equi}, we need to subtract the contributions coming from the unstable strata. In our case, the indexing set $\calB $ of the stratification can be visualised by means of the Figure \ref{fig:1}, called \textit{Hilbert diagram}.

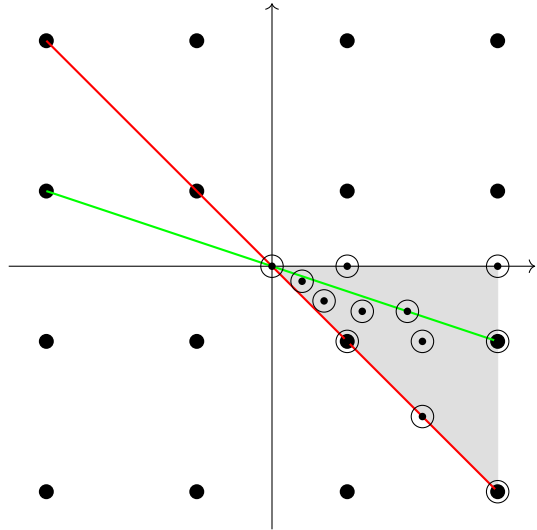
\begin{figure}[htbp] 
	\begin{tikzpicture}
	\draw [lightgray!50, fill=lightgray!50] (0,0) -- (3,0) -- (3,-3) -- (0,0);
	\draw [->] (-3.5,0) -- (3.5,0);
	\draw [->] (0, -3.5) -- (0, 3.5);
	
	\draw (1,1) node[circle,fill, inner sep=2pt]{};
	\draw (1,3) node[circle,fill, inner sep=2pt]{};
	\draw (3,1) node[circle,fill, inner sep=2pt]{};
	\draw (3,3) node[circle,fill, inner sep=2pt]{};
	
	\draw (-1,1) node[circle,fill, inner sep=2pt]{};
	\draw (-1,3) node[circle,fill, inner sep=2pt]{};
	\draw (-3,1) node[circle,fill, inner sep=2pt]{};
	\draw (-3,3) node[circle,fill, inner sep=2pt]{};
	
	\draw (1,-1) node[circle,fill, inner sep=2pt]{};
	\draw (1,-3) node[circle,fill, inner sep=2pt]{};
	\draw (3,-1) node[circle,fill, inner sep=2pt]{};
	\draw (3,-3) node[circle,fill, inner sep=2pt]{};
	
	\draw (-1,-1) node[circle,fill, inner sep=2pt]{};
	\draw (-1,-3) node[circle,fill, inner sep=2pt]{};
	\draw (-3,-1) node[circle,fill, inner sep=2pt]{};
	\draw (-3,-3) node[circle,fill, inner sep=2pt]{};
	
	\draw [black, thick] (-3, 3) -- (3, -3);
	\draw [black, thick] (-3, 1) -- (3, -1);
	
	\draw (0,0)  node[circle,fill,inner sep=1pt]{};
	\draw (0,0)  node[circle,draw, inner sep=3pt]{};
	\draw (3,-3)  node[circle,fill,inner sep=1pt]{};
	\draw (3,-3)  node[circle,draw, inner sep=3pt]{};
	\draw (2, -2)  node[circle,fill,inner sep=1pt]{};
	\draw (2, -2)  node[circle,draw, inner sep=3pt]{};
	\draw (1, -1)  node[circle,fill,inner sep=1pt]{};
	\draw (1, -1)  node[circle,draw, inner sep=3pt]{};
	\draw (2, -1)  node[circle,fill,inner sep=1pt]{};
	\draw (2, -1)  node[circle,draw, inner sep=3pt]{};
	\draw (6/5,-3/5)  node[circle,fill,inner sep=1pt]{};
	\draw (6/5,-3/5)  node[circle,draw, inner sep=3pt]{};
	\draw (2/5,-1/5)  node[circle,fill,inner sep=1pt]{};
	\draw (2/5,-1/5)  node[circle,draw, inner sep=3pt]{};
	\draw (3,-1)  node[circle,fill,inner sep=1pt]{};
	\draw (3,-1) node[circle,draw, inner sep=3pt]{};
	\draw (9/5,-3/5) node[circle,fill,inner sep=1pt]{};
	\draw (9/5,-3/5) node[circle,draw, inner sep=3pt]{};
	\draw (9/13,-6/13) node[circle,fill,inner sep=1pt]{};
	\draw (9/13,-6/13)  node[circle,draw, inner sep=3pt]{};
	\draw (1,0)  node[circle,fill,inner sep=1pt]{};
	\draw (1,0)  node[circle,draw, inner sep=3pt]{};
	\draw (3,0)  node[circle,fill,inner sep=1pt]{};
	\draw (3,0)  node[circle,draw, inner sep=3pt]{};
	\end{tikzpicture}
	\caption{\textit{Hilbert diagram}. The circled dots describe the indexing set $ \calB $. The two lines pass through the weights of strictly semistable points (see Proposition \ref{prop:toristrictly}).}\label{fig:1}
\end{figure}

There are 16 black nodes in this square, and each of these nodes represents a monomial $ x_0^ix_1^{3-i}y_0^jy_1^{3-i} $ in $ H^0(\PP^1\times\PP^1, \calO_{\PP^1\times\PP^1}(3, 3)) $, for $ 0\leq i, j \leq 3 $. This square is simply the diagram of weights $ \alpha_I=\alpha_{(i,j)} $ of the representation of $ G $ on $H^0(\PP^1\times\PP^1, \calO_{\PP^1\times\PP^1}(3, 3))$ with respect to the standard maximal torus $ T:=(\diag(a, a^{-1}), \diag(b, b^{-1}), 1) $ in $ G $. Each of the nodes denotes a weight of this representation, namely
\begin{equation}\label{eq:weight}
x_0^ix_1^{3-i}y_0^jy_1^{3-j} \leftrightarrow (3-2i, 3-2j), \ \mathrm{for} \ i,j=0, ..., 3.
\end{equation}
There is a non-degenerate inner product (the Killing form) defined in the Cartan subalgebra $ \mathfrak{t}:=\text{Lie}(T\cap (\SU(2, \CC)\times \SU(2, \CC))) $ in $ \text{Lie}(\SU(2, \CC)\times \SU(2, \CC))\otimes \CC \cong \text{Lie}(G) $. Using this inner product, we can identify the Lie algebra $ \mathfrak{t} $ with its dual $ \mathfrak{t}^{\vee} $, and the above square can be thought of as lying in $ \mathfrak{t} $. The axes of the Hilbert diagram thus coincide with the Lie algebras of the two factors of the maximal compact torus.

The Weyl group $ W(G):=N(T)/T\cong (\mm \times \mm)\ltimes \mm $ coincides with the dihedral group $ D_8 $ of all symmetries of the square. It operates on the \textit{Hilbert diagram} as follows: the first two involutions are reflections along the axes, while the third one is along the principal diagonal. It is easy to see that the grey region is the portion of the square which lies inside a fixed positive Weyl chamber $ \mathfrak{t}_+ $.

By definition, the indexing set $ \calB $ consists of vectors $ \beta $ such that $ \beta $ lies in the closure $ \bar{\mathfrak{t}}_+ $ of the positive Weyl chamber and is also the closest point to the origin of a convex hull spanned by a non-empty set of weights of the representation of $ G $ on $ H^0(\PP^1\times\PP^1, \calO_{\PP^1\times\PP^1}(3, 3)) $. In this situation, we may assume that such a convex hull is either a single weight or it is cut out by a line segment joining two weights, which will be denoted by $ \langle \beta \rangle $ (see Figure \ref{fig:1}).

The codimension $ d(\beta)$ of each stratum $ S_{\beta}\subset X $ is equal to (see \cite[3.1]{Kir89})
\begin{equation}\label{codim}
 d(\beta)= n(\beta)-\dim G/P_\beta, 
\end{equation} 
where $ n(\beta) $ is the number of weights $ \alpha_I $ such that $ \beta\cdot \alpha_I < ||\beta||^2 $, i.e. the number of weights lying in the half-plane containing the origin and defined by $ \beta $. Moreover, let $ P_\beta \subseteq G$ be the subgroup of elements in $ G $ which preserve $ \bar{Y}_{\beta} $, then $ P_{\beta} $ is a parabolic subgroup, whose Levi component is the stabiliser $ \Stab\beta $ of $ \beta\in \mathfrak{t} $ under the adjoint action of $ G $.

All the contributions coming from the unstable strata are summarised in Table \ref{tab:1} and were computed looking at the Figure \ref{fig:1}. 

\begin{table}[h]
	\centering
 \begin{tabular}{c|c|c|c|c}
	weights in $ \langle \beta \rangle $& $ n(\beta) $&$\Stab \beta $& $2d(\beta)$&$P_t^{G}(S_{\beta})$\\
	\hline \rule{0pt}{2.5ex}
	$ (3, -3) $&15&$\langle T, \iota \rangle$&26&$ (1-t^2)^{-1}(1-t^4)^{-1} $\\[1ex]
	$ (3, -1), (1, -3) $&13&$\langle T, \iota \rangle$&22&$ (1-t^2)^{-1} $\\[1ex]
	$ (3, 1), (1, -1), (-1, -3) $&10&$\langle T, \iota \rangle$&16&$ (1+t^2-t^6)(1-t^2)^{-1}(1-t^4)^{-1} $\\[1ex]
	$ (1, -3), (3, 1) $&12 & $T$ &20&$(1-t^2)^{-1}$\\[1ex]
	$ (3, 3), (1, -1) $&10 & $T$ &16&$(1-t^2)^{-1}$\\[1ex]
	$ (1, 1), (-1, -3) $&8 & $T$ &12&$(1-t^2)^{-1}$\\[1ex]
	$ (3, -1) $&14 & $T$ &24&$(1-t^2)^{-2}$\\[1ex]
	$ (1, -3), (3, 3) $&11 & $T$ &18&$(1-t^2)^{-1}$\\[1ex]
	$ (-1, -3), (3, 3) $&9 & $T$ &14&$(1-t^2)^{-1}$\\[1ex]
	$ (3, -3), (3, -1), (3, 1), (3, 3) $& 12 & $\CC^*\times \SL(2, \CC)$ &22&$(1-t^2)^{-1}$\\[1ex]
	$ (1, -3), (1, -1), (1, 1), (1, 3) $& 8 &$\CC^*\times \SL(2, \CC)$& 14&$(1-t^2)^{-1}$
\end{tabular}
\caption{Cohomology of the unstable strata.} \label{tab:1}
\end{table}
The element
 $$ \iota:=\left (\begin{pmatrix} 
0 & 1 \\
-1 & 0 
\end{pmatrix}, \begin{pmatrix} 
0 & -1 \\
1 & 0 
\end{pmatrix}, -1\right ) $$
 is a generator of  $\langle T, \iota \rangle \cong (\CC^*)^2\rtimes \mathbb{Z}_2 $, with automorphism $ (a, b)\leftrightarrow (b^{-1}, a^{-1}) $, which is a double cover of the maximal torus $ T $. For every $ \beta \in \calB $, the first column of Table \ref{tab:1} shows the weights contained in the segment $ \langle \beta \rangle $ orthogonal to the vector $ \beta\in \mathfrak{t} $ (see Figure \ref{fig:1}): then via the correspondence (\ref{eq:weight}) one can obtain an explicit geometrical interpretation of the curve contained in each unstable stratum. The terms appearing in the second, third and fourth columns are determined easily from the \textit{Hilbert diagram}. The computations in the last column follow from applying Theorem \ref{thm:equi} to the action of $ \Stab \beta $ on $ Z_{\beta} $, in order to compute the equivariant cohomology of each unstable stratum $ P_t^{\Stab \beta}(Z_{\beta}^{ss})=P_t^G(S_{\beta}) $ (see Remark \ref{rmk:unstable}). 
 
 We shall discuss some of these cases below.
\begin{lemma}
	There are exactly six unstable strata indexed by $\beta$, as listed in Table \ref{tab:1}, such that $Z_{\beta}\cong \PP^1$, and their equivariant Hilbert-Poincar\'{e} series is $P_t^G(S_{\beta})=(1-t^2)^{-1}$. 
\end{lemma}

\begin{proof}
	Looking at Figure \ref{fig:1}, there are 6 unstable strata indexed by $\beta \in \calB$ such that the segment $\langle \beta \rangle$ orthogonal to the vector $\beta$ contains two weights that generate the line $Z_{\beta}\subset X$. As summarised in Table \ref{tab:1}, in five of these cases the stabiliser $\Stab \beta$ is isomorphic to the maximal torus $T$ and hence by Remark \ref{rmk:unstable}
	\[ P_t^G(S_{\beta})=\frac{1+t^2}{(1-t^2)^2}-\frac{2t^2}{(1-t^2)^2}=\frac{1}{1-t^2}. \]
	In the remaining case, corresponding to the second row of Table \ref{tab:1}, the stabiliser is $\Stab \beta \cong \langle T, \iota \rangle$ and the cohomology of the corresponding stratum is 
	\[ P_t^G(S_{\beta})=\frac{1+t^2}{(1-t^2)(1-t^4)}-\frac{t^2}{(1-t^2)^2}=\frac{1}{1-t^2}. \]
\end{proof}

\begin{lemma}
	There is exactly one unstable stratum indexed by $\beta$, as listed in Table \ref{tab:1}, such that $Z_{\beta}\cong \PP^2$, and its equivariant Hilbert-Poincar\'{e} series is $P_t^G(S_{\beta})=(1+t^2-t^6)(1-t^2)^{-1}(1-t^4)^{-1}$. 
\end{lemma}

\begin{proof}
	The case under consideration corresponds to the third row of Table \ref{tab:1}, where the segment orthogonal to $ \beta $ contains three weights spanning $ Z_{\beta}\cong \PP^2 $. Hence and by Theorem \ref{thm:equi} the equivariant cohomological series of the correspondent stratum is
	\[ P_t^G(S_{\beta})=\frac{1+t^2+t^4}{(1-t^2)(1-t^4)}-\frac{t^4}{(1-t^2)^2}=\frac{1+t^2-t^6}{(1-t^2)(1-t^4)}. \]
\end{proof}

\begin{lemma}
	There are exactly two unstable strata indexed by $\beta$, as listed in Table \ref{tab:1}, such that $Z_{\beta}\cong \PP^3$, and their equivariant Hilbert-Poincar\'{e} series is $P^G(S_{\beta})=(1-t^2)^{-1}$.
\end{lemma}

\begin{proof}
	The cases under consideration correspond to the last two rows of Table \ref{tab:1}, where the segment orthogonal to $ \beta $ contains four weights spanning a $ \PP^3 $. The linear subspace $Z_{\beta}$ is acted on by the group $ \Stab \beta=\CC^* \times \SL(2, \CC) $. The first factor is central and acts trivially on $ Z_{\beta} $, while the action of the second factor can be identified with the action on the space $ \mathrm{Sym}^3 \PP^1 \cong \PP^3$ of binary cubic forms by change of coordinates. This leads to
	\[ P^G(S_{\beta})=P_t(B \CC^*) P_t^{SL(2, \CC)}((\mathrm{Sym}^3 \PP^1)^{ss})=P_t(B \CC^*)P(M_{0,3})=\frac{1}{1-t^2}. \]
\end{proof}

The remaining two cases, that is when $Z_{\beta}\cong \PP^0$, are easier and left to the reader.

We are finally ready to prove Proposition \ref{prop:seriesss}:
\begin{proof}[Proof of Proposition \ref{prop:seriesss}]
According to Theorem \ref{thm:equi}, we need to subtract all the contributions of the unstable strata, appearing in Table \ref{tab:1}, to the $ G $-equivariant cohomology of $ X $ computed in (\ref{eq:PGX}).
\end{proof}

\section{Kirwan blow-up}
\label{sec:blow}

In this Section we recall the general construction of the Kirwan blow-up of a GIT quotient which provides an orbifold resolution of singularities. It is achieved by stratifying the GIT boundary $X/\!\!/G \setminus X^s/\!\!/G$ in terms of the connected components $R$ of the stabilisers of the associated polystable orbits. Then, one proceeds by blowing-up these strata according to the dimension of the corresponding $R$. In our situation, the Kirwan blow-up $\widetilde{M} \rightarrow M$ is obtained by blowing-up three loci of strictly polystable points, geometrically described in Theorem \ref{thm:polystable} (see also Proposition \ref{prop:blow-up}).

\subsection{General setting}
\label{subsec:blow}
In general the equivariant cohomology $ H^*_G(X^{ss}) $ of the semistable locus does not coincide with the cohomology $ H^*(X/\!\!/G) $ of the GIT quotient, unless in the case when all semistable points are actually stable. This is not the case for us. The solution is given by constructing a \textit{partial resolution of singularities} $ \widetilde{X}/\!\!/G \rightarrow X/\!\!/G $, known as \textit{Kirwan blow-up} \cite{Kir85}, for which the group $ G $ acts with finite isotropy groups on the semistable points $ \widetilde{X}^{ss} $. We briefly describe how it is constructed.

We consider again the setting, as in Section \S \ref{subsec:strata}, of a smooth projective manifold $ X \subset \PP^n $ acted on by a  reductive group $ G $. We also assume throughout the paper that the stable locus $ X^{s}\neq \varnothing $ is non-empty. In order to produce the Kirwan blow-up, we need to study the GIT boundary $ X/\!\!/G \smallsetminus X^s/\!\!/G $ and stratify it in terms of the isotropy groups of the associated semistable points. More precisely, let $ \calR $ be a set of representatives for the conjugacy classes of connected components of stabilisers of strictly polystable points, i.e. semistable points with closed orbits, but infinite stabilisers. Let $ r $ be the maximal dimension of the groups in $ \calR $, and let $ \calR(r)\subseteq \calR $ be the set of representatives for conjugacy classes of subgroups of dimension $ r $. For every $ R\in \calR(r) $, consider the fixed locus
\begin{equation}
	Z_{R}^{ss}:=\{ x \in X^{ss} : R \ \text{fixes} \ x \} \subset X^{ss}.
\end{equation}

Kirwan showed \cite[\S 5]{Kir85} that the subset 
\[ \bigcup_{R\in \calR(r)}G\cdot Z_{R}^{ss}\subset X^{ss} \]
is a disjoint union of smooth $ G $-invariant closed subvarieties in $ X^{ss} $. Now let $ \pi_1:X_1\rightarrow X^{ss} $ be the blow-up of $ X^{ss} $ along $\bigcup_{R\in \calR(r)} G\cdot Z_{R}^{ss} $ and $ E\subset X_1 $ be the exceptional divisor.

Since the centre of the blow-up is invariant under $ G $, there is an induced action of $ G $ on $ X_1 $, linearised by a suitable ample line bundle. If $ L=\calO_{\PP^n}(1)|_{X} $ is the very ample line bundle on $ X $ linearised by $ G $, then there exists $ d\gg 0 $ such that $ L_1:=\pi_1^* L^{\otimes d} \otimes \calO_{X_1}(-E) $ is very ample and admits a $ G $-linearisation (see \cite[3.11]{Kir85}). After making this choice, the set $ \calR_1 $ of representatives for the conjugacy classes of connected components of isotropy groups of strictly polystable points in $ X_1 $ will be strictly contained in $ \calR $ (see \cite[6.1]{Kir85}). Moreover, the maximum among the dimensions of the reductive subgroups in $ \calR_1 $ is strictly less than $ r $. Now we restrict to the new semistable locus $ X_1^{ss}\subset X_1 $, so that we are ready to perform the same process as above again. 

After at most $ r $ steps, we obtain a finite sequence of \textit{modifications}:
\begin{equation}
	\widetilde{X}^{ss}:= X^{ss}_r \rightarrow ... \rightarrow X^{ss}_1 \rightarrow X^{ss},
\end{equation}
by iteratively restricting to the semistable locus and blowing-up smooth invariant centres (cf. \cite[6.3]{Kir85}).

Therefore, in the last step, $ \widetilde{X}^{ss} $ is equipped with a $ G $-linearised ample line bundle such that $ G $ acts with finite stabilisers. In conclusion, we have the diagram
\begin{equation}
\begin{tikzcd}
\widetilde{X}^{ss} \arrow{r} \arrow{d} & X^{ss} \arrow{d} \\
\widetilde{X}/\!\!/G \arrow{r}& X/\!\!/G,
\end{tikzcd}
\end{equation}
where the \textit{Kirwan blow-up} $ \widetilde{X}/\!\!/G $, having at most finite quotient singularities, gives a \textit{partial desingularization} of $ X/\!\!/G $, which in general has worse singularities.

\subsection{Kirwan blow-up for $ (3, 3) $ curves in $ \PP^1 \times \PP^1 $}
Coming back to our case, we need to find the indexing set $ \calR $ of the Kirwan blow-up and the corresponding spaces $ Z_{R}^{ss} $, for all $ R\in \calR $. Namely, one must compute the connected components of the identity in the stabilisers among all the four families of polystable curves listed in Theorem \ref{thm:polystable}. Compared to \cite[\S 2.2]{Fed12}, we provide a more explicit, but equivalent, way to find the indexing set $ \calR $, which has also the advantage to compute $ Z_{R} $ and $ Z_{R}^{ss} $.

We must find which non-trivial connected reductive subgroups $ R \subset G $ fix at least one semistable point. Firstly, since $ R $ is connected, $ R $ must be contained in $ G^0=\SL(2, \CC)\times \SL(2, \CC) $. Secondly, since we are interested only in the conjugacy class of $ R $, we may assume that its intersection $T_R:=R\cap T $ with the maximal torus is a maximal torus of $ R $ and $ R\cap (\SU(2, \CC)\times \SU(2, \CC)) $ is a maximal compact subgroup. Since $ 0\in \mathfrak{t} $ is not a weight, it follows that $ T\cong (\CC^*)^2 $ fixes no semistable points. Therefore $ T_ R $ is a subtorus of rank one. 

The fixed point set $ Z_R^{ss} $ in $X^{ss}  $ consists of all semistable points whose representatives in $ H^0(\PP^1\times\PP^1, \calO_{\PP^1\times\PP^1}(3, 3))\cong\CC^{16} $ are fixed by the linear action of $ R $. Thus $ \mathrm{Fix}(T_R, \CC^{16}) $ is spanned by those weight vectors which lie on a line through the centre of the \textit{Hilbert diagram} and orthogonal to the Lie subalgebra $ \text{Lie}(T_R\cap (\SU(2, \CC)\times \SU(2, \CC))) \subset \mathfrak{t} $. Up to the action of a suitable element of the Weyl group $ W(G) $, we can assume that the line passes through the chosen closed positive Weyl chamber $ \bar{\mathfrak{t}}_+ $. We have only two possibilities, see Figure \ref{fig:1}.

Therefore we proved the following
\begin{prop} \label{prop:toristrictly}
	If $ R\in \calR $ is a subgroup in the indexing set of Kirwan's partial resolution, let $ T_R $ denote the maximal torus of $ R $ and let $ Z_R^{ss} $ denote the fixed-point set of $ R $ in $ X^{ss} $. Then, up to conjugation, there are two possibilities for $ T_R $ and $ Z_R^{ss} $:
	\begin{enumerate}[(i)]
		\item $ T_R=T_1:=\{ (\diag(t, t^{-1}), \diag(t, t^{-1}), 1): t\in \CC^* \} $ and $ Z_R^{ss} $ is contained in the projective space $ \PP\{ ax_0^3y_1^3+bx_0^2x_1y_0y_1^2+cx_0x_1^2y_0^2y_1+dx_1^3y_0^3 \}\cong\PP^3 $ spanned by the polynomials $x_0^3y_1^3$, $x_0^2x_1y_0y_1^2$, $x_0x_1^2y_0^2y_1$ and $x_1^3y_0^3 $.
		\item $ T_R=T_2:=\{ (\diag(t, t^{-1}), \diag(t^3, t^{-3}), 1): t\in \CC^* \} $ and $ Z_R^{ss} $ is contained in the projective space $ \PP\{ ax_0^3 y_0y_1^2+bx_1^3y_0^2y_1 \}\cong \PP^1 $ spanned by the polynomials $ x_0^3 y_0y_1^2$ and $x_1^3y_0^2y_1$. 
	\end{enumerate}
\end{prop}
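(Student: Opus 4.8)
The plan is to reduce the statement to a short combinatorial analysis of the \textit{Hilbert diagram}, on the basis of the reductions already carried out above. We know that $R\subseteq G^0=\SL(2,\CC)\times\SL(2,\CC)$ and that, after replacing $R$ by a conjugate, $T_R:=R\cap T$ is a one-dimensional subtorus of the standard maximal torus, so that $\mathrm{Lie}(T_R\cap(\SU(2,\CC)\times\SU(2,\CC)))$ is a line in the two-dimensional real space $\mathfrak{t}$. Since a point fixed by $R$ is automatically fixed by $T_R$, we get
\[ Z_R^{ss}\subseteq \PP(\mathrm{Fix}(R,\CC^{16}))\cap X^{ss}\subseteq \PP(\mathrm{Fix}(T_R,\CC^{16})), \]
and, as recalled just before the statement, $\mathrm{Fix}(T_R,\CC^{16})$ is precisely the span of those weight vectors whose weights lie on the line $\ell_R\subset\mathfrak{t}$ through the origin orthogonal to $\mathrm{Lie}(T_R)$. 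Note that only this \emph{inclusion} is needed, so we are not obliged to determine the semistable locus inside these subspaces here. Hence it suffices to classify, up to the Weyl group $W(G)\cong D_8$, the lines through the origin containing at least one of the sixteen weights $(3-2i,3-2j)$, $0\le i,j\le 3$, and for each such line to read off the orthogonal one-parameter subgroup and the monomials it fixes.

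Next I would run the enumeration. Every weight lies in $\{-3,-1,1,3\}^2$, so it is on no coordinate axis, and a line through the origin and a weight $(a,b)$ has slope $b/a\in\{\pm1,\pm3,\pm\tfrac13\}$: six lines in all. Moreover the antipode $(-a,-b)$ is again a weight (the one attached to $x_0^{3-i}x_1^{i}y_0^{3-j}y_1^{j}$), so each such line carries a pair of opposite weights, and, as one expects on a strictly semistable locus, the origin lies in the convex hull of the weights it contains. The dihedral group $D_8$ of symmetries of the square partitions these six lines into two orbits, the two diagonals $\{y=\pm x\}$ and the four shallow lines $\{y=\pm 3x,\ y=\pm\tfrac13 x\}$; picking in each orbit the representative meeting the chosen closed positive Weyl chamber (the grey triangle) yields the red line $\ell_1=\{y=-x\}$, through $(3,-3)$, and the green line $\ell_2=\{y=-\tfrac13 x\}$, through $(3,-1)$. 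This is exactly the dichotomy asserted.

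Finally I would record the data of each case. For $\ell_1$ the orthogonal direction in $\mathfrak{t}$ is $(1,1)$, so $T_R=T_1=\{(\diag(t,t^{-1}),\diag(t,t^{-1}),1)\}$; a monomial $x_0^ix_1^{3-i}y_0^jy_1^{3-j}$ is $T_1$-fixed iff $(3-2i)+(3-2j)=0$, i.e. $i+j=3$, which singles out $x_0^3y_1^3,\ x_0^2x_1y_0y_1^2,\ x_0x_1^2y_0^2y_1,\ x_1^3y_0^3$, hence the $\PP^3$ of part (i). For $\ell_2$ the orthogonal direction is $(1,3)$, so $T_R=T_2=\{(\diag(t,t^{-1}),\diag(t^3,t^{-3}),1)\}$; fixedness now reads $(3-2i)+3(3-2j)=0$, i.e. $i+3j=6$, leaving only $x_0^3y_0y_1^2$ and $x_1^3y_0^2y_1$, hence the $\PP^1$ of part (ii); and in both cases $Z_R^{ss}$ is contained in the projective subspace just found. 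The only point requiring care is the bookkeeping: checking that the list of six lines and of the two $D_8$-orbits is exhaustive, and keeping the Killing-form identification $\mathfrak{t}\cong\mathfrak{t}^{\vee}$ straight so that ``the line orthogonal to $\mathrm{Lie}(T_R)$'' is matched with the correct subtorus $T_1$, respectively $T_2$. I do not expect any genuine obstacle beyond this.
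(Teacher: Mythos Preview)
Your proposal is correct and follows essentially the same argument as the paper: reduce to a rank-one subtorus $T_R\subset T$, observe that $\mathrm{Fix}(T_R,\CC^{16})$ is spanned by the weight vectors lying on the line through the origin orthogonal to $\mathrm{Lie}(T_R)$, and then classify such lines up to the Weyl group $W(G)\cong D_8$. The only difference is one of presentation: the paper simply asserts ``we have only two possibilities, see Figure~\ref{fig:1}'' and leaves the enumeration to the picture, whereas you carry out explicitly the slope count ($b/a\in\{\pm1,\pm3,\pm\tfrac13\}$) and the decomposition into two $D_8$-orbits before reading off $T_1$, $T_2$ and the corresponding monomials.
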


We start analysing the second case. We can easily see from the characterisation of semistable points (Theorem \ref{thm:polystable}) that all the semistable curves are given by $ y_0y_1(ax_0^3y_1+bx_1^3y_0) $ with $ a\neq 0 $ and $ b\neq 0 $.
Geometrically these curves contains two lines of the same ruling and the residual curve intersects them in 2 points, giving 2 singularities of type $ A_5 $. We will call these curves as \textit{A-curves}. Their singular points are $ ((0:1), (0:1)) $ and $ ((1:0), (1:0)) $ in $ \PP^1\times \PP^1 $; see the Figure \ref{fig:2A5}.
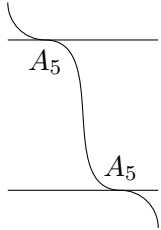
\begin{figure}[H]
\centering
\begin{tikzpicture}
\draw (-1, 1) -- (1, 1);
\draw (1, -1) -- (-1, -1);
\draw (-1, 1.5) to[out=270, in=180] (-0.5, 1);
\draw (-0.5, 1) to[out=0, in=180] (0.5, -1);
\draw (0.5, -1) to[out=0, in=90] (1, -1.5);
\node [below] at (-0.5,1) {$ A_5 $};
\node [above] at (0.5,-1) {$ A_5 $};
\end{tikzpicture}
\caption{Curve with $ 2A_5 $ singularities.}	\label{fig:2A5}
\end{figure}

By rescaling the variables $ x_0 $ and $ x_1 $, all the semistable \textit{A-curves} are equivalent to the curve $ C_{2A_5} $ defined by
\[ C_{2A_5}:=\{F_{C_{2A_5}}:=y_0y_1(x_0^3y_1+x_1^3y_0)=0 \}. \] 
Through this geometric description, it is now easy to show that in this case actually $ R=T_R $. We recall that $ R $ is the connected component of the identity in the stabiliser of the \textit{A-curves}: up to conjugation, we can think just of $ C_{2A_5} $. Yet every element of $ R $, stabilising the point corresponding to $ C_{2A_5} $ in $ X $, will induce an automorphism of $ C_{2A_5} $, which a fortiori must preserve the singular locus. Therefore every element of $ R $ must fix $ ((0:1), (0:1)) $ and $ ((1:0), (1:0)) $ or interchange them. Hence 
\[ R\subseteq T \sqcup \left \{ \left (\begin{pmatrix} 
0 & \alpha \\
-\alpha^{-1} & 0 
\end{pmatrix}, \begin{pmatrix} 
0 & \beta \\
-\beta^{-1} & 0 
\end{pmatrix}, 1\right ) : \alpha, \beta \in \CC^* \right \} \subseteq G. \]
From the connectedness of $ R $, it follows $ R\subseteq T $, hence $ R=T\cap R=T_R=T_2 $.

Now we analyse the first case. We can easily see via the Hilbert-Mumford numerical criterion (\cite[2.1]{MFK94}) that all the semistable curves are given by
\[ ax_0^3y_1^3+bx_0^2x_1y_0y_1^2+cx_0x_1^2y_0^2y_1+dx_1^3y_0^3=0, \]
where $ (a,b) $ are not simultaneously zero and $ (c, d) $ are not simultaneously zero, i.e. $ Z_{T_1}^{ss}=\PP^3\smallsetminus \{ a=b=0, \ c=d=0 \} $.  Moreover we can write every curve
\[ ax_0^3y_1^3+bx_0^2x_1y_0y_1^2+cx_0x_1^2y_0^2y_1+dx_1^3y_0^3=L_1L_2L_3; \]
\[ L_i=\alpha_ix_0y_1+\beta_ix_1y_0, \: (\alpha_i:\beta_i)\in\PP^1, \ i=1, 2, 3. \]
as the union of three conics in the class $ (1, 1) $, all meeting at points $ ((0:1), (0:1)) $ and $ ((1:0), (1:0)) $ in $ \PP^1\times \PP^1 $. We find three cases depending on how many $ L_i $'s coincide.
\begin{enumerate}[(i)]
	\item Assume that all the $ L_i $ coincide, namely the curve is a triple conic, which turns out to be equivalent to $ 3C $, defined by
	\[3C:= \{F_{3C}:=(x_0y_1-x_1y_0)^3=0\}. \]
	This curve is nothing but a triple line $ \PP^1\subset \PP^1\times \PP^1 $ diagonally embedded. Thus its stabiliser in $ \PGL(2, \CC)\times \PGL(2, \CC) $ is $ \PGL(2, \CC) $ diagonally embedded, too. We get a non-splitting central extension of groups:
	\begin{equation}\label{extension}
	1\rightarrow \mu_2 \times \mu_2 \rightarrow H \rightarrow \PGL(2, \CC) \rightarrow 1,
	\end{equation}
	where $ H:=\{ (A, \pm A): A\in \SL(2, \CC) \} $ is the stabiliser of $ 3C $ in $ G^0 $, that is to say the preimage of $ \PGL(2, \CC) $ under the natural homomorphism $ G^0=\SL(2, \CC) \times \SL(2, \CC)\rightarrow \PGL(2, \CC)\times \PGL(2, \CC) $. Here $ \mu_2 \times \mu_2 $ must be thought as the subgroup $ \{ (\pm I, \pm I), (\pm I, \mp I) \}\subset H $. Therefore we find the indexing subgroup $ R=\SL(2, \CC) $ diagonally embedded in $ G^0 $ and the associated spaces $ Z_{R}=Z_{R}^{ss}=\{ 3C \} $ are one point.
	\item Assume that two $ L_i $ coincide and the third one does not. The semistable curves of this type are unions of a smooth double conic and a conic that is nonsingular along the double conic. They intersect at the points $ ((0:1), (0:1)) $ and $ ((1:0), (1:0)) $, which consist of singularities of type $ D_8 $; see Figure \ref{fig:2D8}.
	
	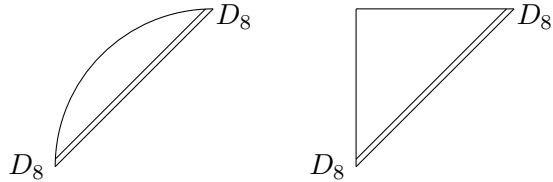
\begin{figure}[htbp]
		\centering
		\begin{tikzpicture}
		\draw (-5, -1) -- (-3, 1);
		\draw (-5, -1.1) -- (-2.9, 1);
		\draw (-5, -1.1) to[out=90, in=180] (-2.9, 1);
		\node [left] at (-5, -1.1) {$ D_8 $};
		\node [right] at (-3, 0.9) {$ D_8 $};
		
		\draw (-1, -1) -- (1, 1);
		\draw (-1, -1.1) -- (1.1, 1);
		\draw (-1, -1.1) -- (-1, 1);
		\draw (-1, 1) -- (1.1, 1);
		\node [left] at (-1, -1.1) {$ D_8 $};
		\node [right] at (1, 0.9) {$ D_8 $};
		\end{tikzpicture}
		\caption{Curves with $ 2D_8 $ singularities.} \label{fig:2D8}
	\end{figure}
	Now we can argue like in the case of $ C_{2A_5} $, noticing that every element of $ R $ must preserve the $ D_8 $ singular points. Therefore $ R\subseteq T $, so that $ R=T_1 $.
	
	\item Assume all the $ L_i $ are distinct from each other. The semistable curves of this kind are unions of three conics meeting in two $ D_4 $ singularities. These singular points are again $ ((0:1), (0:1)) $ and $ ((1:0), (1:0)) $; see Figure \ref{fig:2D4}.

	\begin{figure}[h]
		\centering
		\begin{tikzpicture}
		\draw (1, 1) -- (-1, -1);
		\draw (1, 1) to[out=180, in=90] (-1, -1);
		\draw (1, 1) to[out=-90, in=0] (-1, -1);
		\node [left] at (-1, -1) {$ D_4 $};
		\node [right] at (1, 1) {$ D_4 $};
		\end{tikzpicture}
		\caption{Curve with $ 2D_4 $ singularities.} \label{fig:2D4}
	\end{figure}
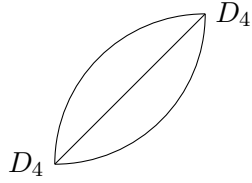
	Arguing once more as before, we find that $ R=T_1 $.
\end{enumerate}

	In conclusion, we proved the following:
	\begin{prop}\label{prop:blow-up}
		The indexing set $ \calR $ of the Kirwan blow-up, such as the fixed loci $ Z_R^{ss} $, for $ (3,3) $ curves in $ \PP^1 \times \PP^1 $, can be described as follows: 
		\begin{enumerate}[(i)]
			\item $ R_{C}:= \SL(2, \CC)$, diagonally embedded in $ G^0 $, and in this case $ Z_{R_{C}}=Z_{R_{C}}^{ss}=\{ 3C \} $ is the triple conic.
			\item $ R_D:=\{ (\diag(t, t^{-1}), \diag(t, t^{-1}), 1): t\in \CC^* \}\cong \CC^*$ and in this case $ Z_{R_D}^{ss}=\PP\{ ax_0^3y_1^3+bx_0^2x_1y_0y_1^2+cx_0x_1^2y_0^2y_1+dx_1^3y_0^3 \}\smallsetminus \{ a=b=0, c=d=0 \} $ is the set of D-curves.
			\item $ R_{A}:=\{ (\diag(t, t^{-1}), \diag(t^3, t^{-3}), 1): t\in \CC^* \}\cong \CC^* $ and in this case $ Z_{R_A}^{ss}=\PP\{ ax_0^3 y_0y_1^2+bx_1^3y_0^2y_1 \} \smallsetminus \{ a=0, b=0 \} $ is the set of \textit{A-curves}. 
		\end{enumerate}
	Moreover, the following holds:
	\[ R_D \subset R_{C}, \: R_{A}\cap R_{C}=\{ (\pm I, \pm I, 1)  \},\]
	\[  G\cdot Z_{R_{C}}^{ss}\subset G\cdot Z_{R_D}^{ss}, \: G\cdot Z_{R_{A}}^{ss} \cap G\cdot Z_{R_D}^{ss}=\emptyset. \]
	\end{prop}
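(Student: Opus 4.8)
The plan is to read the proposition off from the case-by-case discussion above, adding only a few elementary checks. For the description of $\calR$ and of the loci $Z_R^{ss}$: Proposition \ref{prop:toristrictly} reduces us, up to conjugacy, to a connected reductive $R\subseteq G^0$ with maximal torus $T_R\in\{T_1,T_2\}$. If $T_R=T_2$, the argument already given for $C_{2A_5}$ — any element of $R$ permutes the two $A_5$ points $((0:1),(0:1))$ and $((1:0),(1:0))$, hence $R\subseteq T$ — forces $R=T_2=R_A$. If $T_R=T_1$, then $\mathrm{Fix}(T_1,\CC^{16})$ is the four-dimensional span of $x_0^3y_1^3$, $x_0^2x_1y_0y_1^2$, $x_0x_1^2y_0^2y_1$, $x_1^3y_0^3$; factoring such a curve as $L_1L_2L_3$ with $L_i=\alpha_ix_0y_1+\beta_ix_1y_0$ and separating the three coincidence patterns of the $L_i$ shows that $R$ is the diagonal $\SL(2,\CC)$ precisely for the triple conic (the diagonal $\SL(2,\CC)$-invariants in $\mathrm{Sym}^3\CC^2\otimes\mathrm{Sym}^3\CC^2$ being one-dimensional by Clebsch--Gordan, whence $Z_{R_C}=Z_{R_C}^{ss}=\{3C\}$) and $R=T_1=R_D$ for the curves with two $D_8$ or two $D_4$ singularities, since again the two prescribed singular points rule out any larger stabiliser. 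The semistable loci inside $Z_{T_1}\cong\PP^3$ and $Z_{T_2}\cong\PP^1$ were determined above from Theorem \ref{polystable} and the numerical criterion.

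It then remains to check the four relations. The inclusion $R_D\subset R_{C}$ is immediate, since $T_1$ is the standard maximal torus of the diagonal copy of $\SL(2,\CC)$. For the intersection, an element $(\diag(t,t^{-1}),\diag(t^3,t^{-3}),1)$ of $R_A$ lies in $R_{C}$ iff its two $\SL(2,\CC)$-components agree, i.e. $t=t^3$, so $t=\pm1$, leaving exactly $(\pm I,\pm I,1)$. For $G\cdot Z_{R_{C}}^{ss}\subset G\cdot Z_{R_D}^{ss}$ it suffices to notice that the triple conic belongs to the $T_1$-family: $3C=(x_0y_1-x_1y_0)^3$ expands to the point $(a:b:c:d)=(1:-3:3:-1)$ of $Z_{T_1}\cong\PP^3$, which avoids both $\{a=b=0\}$ and $\{c=d=0\}$ and so lies in $Z_{R_D}^{ss}$; equivalently $3C$ is semistable and is fixed by $R_D\subset R_{C}$, so $G\cdot\{3C\}\subset G\cdot Z_{R_D}^{ss}$.

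The one point deserving a genuine argument is the disjointness $G\cdot Z_{R_A}^{ss}\cap G\cdot Z_{R_D}^{ss}=\emptyset$, for which I would invoke a $G$-invariant of the curve. Every member of $Z_{R_D}^{ss}$ is a product $L_1L_2L_3$ of bilinear forms, hence supported on curves of bidegree $(1,1)$, $(1,0)$ or $(0,1)$; every A-curve, on the other hand, has a smooth rational component of bidegree $(3,1)$, namely the residual cubic $x_0^3y_1+x_1^3y_0=0$. Since $g\cdot C$ has the same multiset of component bidegrees as $C$ when $g\in G^0$, and its ruling-swap $(a,b)\mapsto(b,a)$ when $g\notin G^0$, and since neither $(3,1)$ nor $(1,3)$ lies in $\{(1,1),(1,0),(0,1)\}$, no A-curve is $G$-equivalent to a curve in $Z_{R_D}^{ss}$. (One can argue instead with singularity types: A-curves are reduced with two $A_5$ singularities, while a curve in $Z_{R_D}^{ss}$ is either non-reduced or has only ordinary multiple points — nodes or ordinary triple points, i.e. $A_1$ or $D_4$ — never an $A_5$; both properties are $G$-invariant.) The main obstacle is thus only conceptual, namely isolating such an invariant; everything else is bookkeeping resting on Proposition \ref{prop:toristrictly} and the preceding case analysis.
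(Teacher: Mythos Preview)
Your proposal is correct and follows essentially the paper's own approach: the proposition is a summary of the case analysis preceding it (reduction via Proposition~\ref{prop:toristrictly} to $T_R\in\{T_1,T_2\}$, then the singular-points argument forcing $R\subseteq T$ for the $A$- and $D$-cases, and the diagonal-$\PGL(2,\CC)$ argument for the triple conic), and the paper treats the ``Moreover'' relations as immediate consequences.

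Where you go a bit beyond the paper is in spelling out those final relations. The Clebsch--Gordan remark for $Z_{R_C}=\{3C\}$ is a clean alternative to the paper's direct stabiliser computation. Your disjointness argument for $G\cdot Z_{R_A}^{ss}\cap G\cdot Z_{R_D}^{ss}=\emptyset$ via the bidegree multiset of irreducible components (or equivalently via singularity types) is exactly the right kind of $G$-invariant; the paper does not make this explicit, so your addition is genuinely useful rather than redundant. One cosmetic point: in your $R_A\cap R_C$ computation you should perhaps note that the intersection consists of the two elements $(I,I,1)$ and $(-I,-I,1)$, since $R_C$ is the \emph{diagonal} copy and so the two signs are linked; the paper's notation $\{(\pm I,\pm I,1)\}$ is slightly ambiguous on this.
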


We recall that the Kirwan's partial desingularization process consists of successively blowing-up $ X^{ss} $ along the (strict transforms of the) loci $ G\cdot Z_R^{ss} $ in order of dim$ R $, to obtain the space $ \widetilde{X}^{ss} $, and then taking the induced GIT quotient $ \widetilde{X}/\!\!/G $ with respect to a suitable linearisation. 

In our situation, we get the diagram
$$
\begin{tikzcd}
\widetilde{X}^{ss}=(\Bl_{G\cdot Z_{R_{A}}^{ss}}X_2^{ss})^{ss} \arrow{r} \arrow{r} \arrow{d}& X_2^{ss}=(\Bl_{G\cdot Z_{R_D,1}^{ss}}X_1^{ss})^{ss} \arrow{r} &X_1^{ss}=(\Bl_{G\cdot Z_{R_{C}}^{ss}}X^{ss})^{ss} \arrow{r} & X^{ss} \arrow{d} \\
\widetilde{M} \arrow{rrr}&&& M.
\end{tikzcd}
$$

The space $ \widetilde{X}^{ss} $ is obtained by firstly blowing up the orbit of the triple conic $ G\cdot Z_{R_{C}}^{ss} $, followed by the blow-up of $ G\cdot Z_{R_D,1}^{ss} $, namely the strict transform of the locus of \textit{D-curves} $ G\cdot Z_{R_D}^{ss} $ under the first bow-up. In the end we need to blow-up the orbit $ G\cdot Z_{R_{A}}^{ss} $ of $ C_{2A_5} $. We also observe that the third blow-up commutes with the other two, because the orbit of \textit{A-curves} is disjoint from the locus of \textit{D-curves}. Thus we find:

\begin{definition}\label{def:Mtilde}
The \textit{Kirwan blow-up} $ \widetilde{M}:=\widetilde{X}/\!\!/G \rightarrow M $ is defined as the GIT quotient of the blown-up variety $ \widetilde{X}^{ss} $ constructed above.
\end{definition}

Intrinsically at the level of moduli spaces, $ \widetilde{M} $ is obtained by first blowing up the point $ G\cdot Z_{R_{C}}^{ss}/\!\!/G $ corresponding to triple conics, then the strict transform $ \Bl_{G\cdot Z_{R_{C}}^{ss}/\!\!/G}(G\cdot Z_{R_D}^{ss} /\!\!/G) $ of the surface corresponding to the \textit{D-curves} and eventually blowing up the point $ G\cdot Z_{R_{A}}/\!\!/G $ of \textit{A-curves}. Nevertheless, for computational reasons, we will prefer the first description.

\section{Cohomology of the Kirwan blow-up}

This Section is devoted to the proof of

\begin{theorem}\label{thm:cohoblow}
	The Hilbert-Poincar\'{e} polynomial of the Kirwan blow-up $ \widetilde{M} $ is
	$$ P_t(\widetilde{M})=1+4t^2+7t^4+11t^6+14t^8+14t^{10}+11t^{12}+7t^{14}+4t^{16}+t^{18}.$$
\end{theorem}

In the first part of the Section, we recall the general theory to compute the Betti numbers of the Kirwan blow-up $\widetilde{X}/\!\!/G \rightarrow X/\!\!/G$ of a GIT quotient. Since $\widetilde{X}/\!\!/G$ has only finite quotient singularities, its rational cohomology coincides with the equivariant cohomology of the semistable locus $\widetilde{X}^{ss}$, which in turn can be computed from the equivariant cohomology of $X^{ss}$ corrected by an error term (see Theorem \ref{thm:cohkirblow}). This error term is divided into a main and extra contribution: the former takes into account the geometry of the centres of the blow-ups and the latter the action of $G$ on the exceptional divisors.

In the second and third part of this Section, we complete the computation of the Betti numbers of $\widetilde{M}$ by calculating the main and extra terms appearing in Theorem \ref{thm:cohkirblow} for our case. This concludes the proof of Theorem \ref{thm:cohoblow}.  
 
\label{sec:cohomology}
\subsection{General setting} \label{subsec:settingblowup} The effect of the desingularization on the equivariant Poincar\'{e} series is explained in \cite{Kir85}.  We consider again the setting, as in Section \S \ref{subsec:blow}, of a nonsingular projective variety $ X $ together with a linear action of a reductive group $ G $. Assume that $ R $ is a connected reductive subgroup with the property that the fixed point set $ Z_R^{ss}\subset X^{ss} $ is non-empty, but that $ Z_{R'}^{ss}=\varnothing $ for all higher subgroups $ R'\subset G $ of higher dimension than $ R $.

Let $ \pi:\hat{X}\rightarrow X^{ss}$ be the blow-up of $ X^{ss} $ along $ G\cdot Z_R^{ss} $. Then the equivariant cohomology of $ \hat{X} $ is related to that of the exceptional divisor $ E $ by
\begin{equation}\label{cohomologyblowup}
H^*_G(\hat{X})=H^*_G(X^{ss})\oplus H^*_G(E)/H^*_G(G\cdot Z_R^{ss})
\end{equation}
(see \cite[\S 4.6]{GH78}, \cite[7.2]{Kir85}). If $ \calN^R $ denotes the normal bundle to $ G\cdot Z_R^{ss} $ in $ X^{ss} $, then the equivariant cohomology of the exceptional divisor $ E=\PP \calN^R $ can be computed via a degenerating spectral sequence, namely
\[ H^*_G(E)=H^*_G(G\cdot Z_R^{ss})(1+...+t^{2(\rk \calN^R-1)}). \]
Kirwan proved (\cite[5.10]{Kir85}) that $ G\cdot Z_{R}^{ss} $ is algebraically isomorphic to $ G\times_{N(R)} Z_{R}^{ss} $, where $ N(R)\subset G $ is the normaliser of $ R$, hence we can compute
\begin{equation}\label{formula:rank}
\rk \calN^R=\dim X -\dim G\cdot Z_{R}^{ss}= \dim X-(\dim G+\dim Z_{R}^{ss} -\dim N(R))
\end{equation}
and
\[ H^*_G(G\cdot Z_R^{ss})=H^*_{N(R)}(Z_R^{ss}). \] 
Therefore from (\ref{cohomologyblowup}), it follows that
\[ P_t^G(\hat{X})=P_t^G(X^{ss})+P_t^{N(R)}(Z_R^{ss})(t^2+...+t^{2(\rk \calN^R-1)}).\]
If we consider the \textit{HKKN stratification} $ \{ S_{\beta} \}_{\beta \in \calB} $ associated to the induced action of $ G $ on $ \hat{X} $ (see Theorem \ref{thm:strata}), we can apply Theorem \ref{thm:equi} to deduce the equivariant Hilbert-Poincar\'{e} series of the semistable locus:
\begin{equation}
P_{t}^{G}(\hat{X}^{ss})=P_{t}^{G}(\hat{X})-\sum_{0\neq\beta \in \hat{\calB} }t^{2 \mathrm{codim}(\hat{S}_{\beta})}P_{t}^{G}(\hat{S}_\beta).
\end{equation}
To use this formula, we have to determine the indexing set $ \hat{\calB} $. For this, we choose a point $ x\in Z_R^{ss} $ and consider the normal vector space $ \calN_x^R $ to $ G\cdot Z_R^{ss} $ in $ X^{ss} $ at this point. Since the action of $ R $ on $ X^{ss} $ keeps this point $ x $ fixed, there is a natural induced representation $ \rho: R \rightarrow \GL(\calN_x^R) $ of $ R $ on this vector space. Let $ \calB(\rho) $ denote the indexing set of the stratification of the $ R $-action on the projective slice $ \PP \calN_x^R $. For each $ \beta' \in \calB(\rho) $, we have the subspaces $ Z_{\beta', \rho}$, $ Z_{\beta', \rho}^{ss}$ and $ S_{\beta', \rho} $ defined as in Section \S \ref{subsec:strata} but with respect to the action of $ R $ on $ \PP \calN_x^R $. 

In \cite[\S 7]{Kir85}, it is proven that $ \hat{\calB} $ can be identified with a subset of $ \calB(\rho) $. Given $ \beta \in \hat{\calB} $, the Weyl group orbit $ W(G) $ of $ \beta $ decomposes into a finite number of $ W(R) $ orbits. There is a unique $ \beta'\in \calB(\rho) $ in each $ W(R) $ orbit contained in the $ W(G) $ orbit of $ \beta $. We thus denote by $ w(\beta', R, G) $ the number of $ \beta' \in \calB(\rho) $ lying in the Weyl group orbit $ W(G)\cdot \beta $. 

For each $ \beta' \in \hat{\calB} \subset \calB(\rho) $, there is an $ (N(R)\cap \Stab \beta') $-equivariant fibration
\[ \pi:Z_{\beta', R}^{ss}:=Z_{\beta'}^{ss}\cap \pi^{-1}(Z_R^{ss})\rightarrow Z_R^{ss} \]
 with all fibres isomorphic to $ Z_{\beta', \rho}^{ss} $. As for each stratum $ \hat{S}_{\beta'} $, its codimension in $ \hat{X} $ is the same as the codimension of $ \hat{S}_{\beta', \rho} $ in $ \PP \calN_x^R $, denoted by $ d(\PP \calN^R, \beta') $ and its Hilbert-Poincar\'{e} series $ P^G_t(\hat{S}_{\beta'}) $ is the same as $ P_t^{N(R)\cap \Stab \beta}(Z_{\beta', R}^{ss}) $.

A repeated application of this argument leads to a formula to compute inductively the equivariant cohomology $ H^*_G(\widetilde{X}^{ss}) $ of the semistable locus $ \widetilde{X}^{ss} $, whose GIT quotient gives the \textit{Kirwan blow-up}. Since $ G $ acts on $ \widetilde{X}^{ss} $ with finite stabilisers, its equivariant Hilbert-Poincar\'{e} polynomial coincides with that of the partial desingularization $ \widetilde{X}/\!\!/G $. We summarise all the previous considerations under the following
\begin{theorem}\cite[7.4]{Kir85} \label{thm:cohkirblow}In the above setting,
	the cohomology of the Kirwan blow-up is given by:
	\[ P_t(\widetilde{X}/\!\!/G)=P_{t}^G(\widetilde{X}^{ss})=P_{t}^G(X^{ss})+\sum_{R\in\calR}A_{R}(t), \]
	where the error term $ A_{R}(t) $ can be divided into main and extra terms, as follows:
\begin{align}
A_R(t)=&P_t^{N}(Z_R^{ss})(t^2+...+t^{2(\rk \calN^R-1)}) \tag{Main term}\\
&-\sum_{0\neq \beta'\in\calB(\rho)}\frac{1}{w(\beta', R, G)}t^{2d(\PP\calN^R, \beta')}P_t^{N\cap \Stab \beta'}(Z_{\beta', R}^{ss}). \tag{Extra term}
\end{align}
\end{theorem}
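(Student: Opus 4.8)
The statement is Kirwan's theorem, so the plan is not to reprove it from first principles but to assemble the ingredients collected in Section \ref{subsec:settingblowup} into the asserted recursion and then to iterate. First I would treat a single blow-up. Let $ R $ be a connected reductive subgroup of maximal dimension among those fixing a semistable point, and let $ \pi\colon\hat X\to X^{ss} $ be the blow-up of $ X^{ss} $ along the smooth centre $ G\cdot Z_R^{ss} $, with exceptional divisor $ E=\PP\calN^R $. Combining the blow-up formula (\ref{cohomologyblowup}) with the projective-bundle identity $ H^*_G(E)=H^*_G(G\cdot Z_R^{ss})(1+t^2+\dots+t^{2(\rk\calN^R-1)}) $, Kirwan's isomorphism $ G\cdot Z_R^{ss}\cong G\times_{N(R)}Z_R^{ss} $ (so that $ H^*_G(G\cdot Z_R^{ss})=H^*_{N(R)}(Z_R^{ss}) $), and the rank computation (\ref{formula:rank}), one obtains
\[ P_t^G(\hat X)=P_t^G(X^{ss})+P_t^{N(R)}(Z_R^{ss})\bigl(t^2+\dots+t^{2(\rk\calN^R-1)}\bigr), \]
which is exactly the source of the \emph{main term}.

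Next I would put $ \hat X $ --- still a smooth projective $ G $-variety, now linearised by $ \pi^*L^{\otimes d}\otimes\calO(-E) $ for $ d\gg 0 $ --- through the machinery of Section \ref{subsec:strata}: Theorem \ref{thm:strata} yields an HKKN stratification $ \{\hat S_\beta\}_{\beta\in\hat\calB} $ which, by Theorem \ref{thm:equi}, is $ G $-equivariantly perfect, so
\[ P_t^G(\hat X^{ss})=P_t^G(\hat X)-\sum_{0\neq\beta\in\hat\calB}t^{2\,\mathrm{codim}(\hat S_\beta)}\,P_t^G(\hat S_\beta). \]
The core of the argument is to describe the strata newly created by the modification: they all lie over $ G\cdot Z_R^{ss} $, and localising at $ x\in Z_R^{ss} $ and passing to the slice representation $ \rho\colon R\to\GL(\calN_x^R) $ one identifies $ \hat\calB $ with a subset of $ \calB(\rho) $, shows that each $ \hat S_{\beta'} $ fibres $ (N(R)\cap\Stab\beta') $-equivariantly over $ Z_R^{ss} $ with fibre $ Z_{\beta',\rho}^{ss} $ (whence $ P_t^G(\hat S_{\beta'})=P_t^{N(R)\cap\Stab\beta'}(Z_{\beta',R}^{ss}) $), and that its codimension equals $ d(\PP\calN^R,\beta') $. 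Finally a single $ W(G) $-orbit of $ \beta $ decomposes into $ w(\beta',R,G) $ distinct $ W(R) $-orbits, each meeting $ \calB(\rho) $ in a single point with the same series and codimension; hence summing over all of $ \calB(\rho) $ overcounts each stratum by the factor $ w(\beta',R,G) $, and dividing by it produces the \emph{extra term}. Together this gives $ P_t^G(\hat X^{ss})=P_t^G(X^{ss})+A_R(t) $.

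To conclude I would iterate: by the facts recalled in Section \ref{subsec:blow}, on $ \hat X^{ss} $ the set of representatives of stabilisers of strictly polystable points is strictly smaller and of strictly smaller maximal dimension, so after at most $ r $ steps one reaches $ \widetilde X^{ss}=X_r^{ss} $, on which $ G $ acts with finite stabilisers. Running the one-blow-up analysis at each stage --- performing the blow-ups in order of decreasing $ \dim R $, so the successive error contributions do not interfere (cf.\ Proposition \ref{prop:blow-up}) --- and summing yields $ P_t^G(\widetilde X^{ss})=P_t^G(X^{ss})+\sum_{R\in\calR}A_R(t) $. Since $ G $ acts with finite isotropy on $ \widetilde X^{ss} $ and rational cohomology is insensitive to finite quotients, $ H^*_G(\widetilde X^{ss};\QQ)\cong H^*(\widetilde X^{ss}/G;\QQ)=H^*(\widetilde X/\!\!/G;\QQ) $, which gives the first equality. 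I expect the main obstacle to be the local analysis of the second paragraph: controlling precisely how the HKKN stratification of the blown-up space is modelled on the slice representation, and in particular getting the Weyl-group normalisation $ w(\beta',R,G) $ right --- this is where essentially all of the technical content of \cite[\S 7]{Kir85} is concentrated, and in a self-contained account those statements would have to be quoted rather than reproved.
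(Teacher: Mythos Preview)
Your proposal is correct and follows precisely the paper's own approach: the theorem is quoted from \cite[7.4]{Kir85}, and Section \ref{subsec:settingblowup} assembles exactly the ingredients you list --- the blow-up formula (\ref{cohomologyblowup}), the projective-bundle identity for $E$, the identification $G\cdot Z_R^{ss}\cong G\times_{N(R)}Z_R^{ss}$, the HKKN stratification of $\hat X$ modelled on the slice representation $\rho$, the Weyl-group count $w(\beta',R,G)$, and the iteration down to finite stabilisers. The only minor imprecision is that at each stage one may have to blow up a disjoint union $\bigcup_{R\in\calR(r)}G\cdot Z_R^{ss}$ rather than a single orbit, but this changes nothing in the argument.
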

 \begin{remark} \label{rmk:cohextra}(cf. \cite[7.2]{Kir85} and \cite[4.1 (4)]{Kir88}) If $ Z_{\beta', \rho}^{ss}=Z_{\beta', \rho} $, the spectral sequence of rational equivariant cohomology associated to the fibration $ \pi:Z_{\beta',R}^{ss}\rightarrow Z_R^{ss} $ degenerates and hence
 	\[ P_t^{N\cap \Stab \beta'}(Z_{\beta', R}^{ss})=P_t^{N\cap \Stab \beta'}(Z_{R}^{ss})\cdot P_t(Z_{\beta', \rho}). \]
 \end{remark}

Due to the role they play in the aforementioned results, we compute the normalisers of the reductive subgroups in $ \calR $.
\begin{prop} \label{prop:normal}The normalisers of the reductive subgroups in $ \calR=\{ R_{C}, R_{D}, R_{A} \} $ are given as follows
	\begin{enumerate}[(i)]
		\item $ N(R_{C})=H\rtimes \mm \subset G $, where $ H=\{ ((A, \pm A), 1): A\in \SL(2, \CC) \}$ fits into the central extension (\ref{extension}):
		\[  1\rightarrow \mu_2 \times \mu_2 \rightarrow H \rightarrow \PGL(2, \CC) \rightarrow 1, \]
		and the semidirect product structure descends from that of $ G $.
		\item $ N(R_D)=S\rtimes \mm \subset G $, where $ S $ is the subgroup of some generalised permutation matrices, namely
		
		\[ S= T \sqcup \left \{ \left (\begin{pmatrix} 
		0 & \alpha \\
		-\alpha^{-1} & 0 
		\end{pmatrix}, \begin{pmatrix} 
		0 & \beta \\
		-\beta^{-1} & 0 
		\end{pmatrix}, 1\right ) : \alpha, \beta \in \CC^* \right \} \subseteq G, \]
		and the semidirect product structure descends from that of $ G $.
		\item $ N(R_{A})=S $, as above.
	\end{enumerate}
\end{prop}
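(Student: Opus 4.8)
The plan is to reduce everything to the normal subgroup $G^0=\SL(2,\CC)\times\SL(2,\CC)$, which has index two in $G$. For a subgroup $R\subseteq G^0$ one has $N(R)\cap G^0=N_{G^0}(R)$ together with an injection $N(R)/N_{G^0}(R)\hookrightarrow\pi_0(G)=\mm$; hence $N(R)$ equals $N_{G^0}(R)$ unless some element of $G\smallsetminus G^0$ normalises $R$, and in that case $N(R)$ contains $N_{G^0}(R)$ with index two. First I would record the two conjugation rules in $G$, using triples $(A_1,A_2,\pm1)$ as in the paper: for $(B,C,1)\in G^0$,
\[ (B,C,1)(A_1,A_2,1)(B,C,1)^{-1}=(BA_1B^{-1},\,CA_2C^{-1},\,1), \]
while for an element lying over the nontrivial class of $\pi_0(G)$,
\[ (B,C,-1)(A_1,A_2,1)(B,C,-1)^{-1}=(BA_2B^{-1},\,CA_1C^{-1},\,1), \]
so that conjugation by something outside $G^0$ combines the ruling-swap with coordinatewise conjugation. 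It is also convenient to note that $\tau:=(I,I,-1)$ is an involution of $G$ whose conjugation action on $G^0$ is exactly the ambient $\mm$-action.

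Next I would treat the three cases separately. For $R_{C}$, the diagonal copy of $\SL(2,\CC)$, an element $(B,C,1)$ normalises it precisely when $BAB^{-1}=CAC^{-1}$ for all $A\in\SL(2,\CC)$, i.e.\ when $C^{-1}B$ is central in $\SL(2,\CC)$, i.e.\ $B=\pm C$; thus $N_{G^0}(R_{C})=H$. Since $\tau$ fixes $R_{C}$ pointwise, is an involution, and normalises $H$ via the $\mm$-action, we obtain $N(R_{C})=H\rtimes\mm$, the semidirect structure being the one induced from $G$; the description of $H$ as the central extension (\ref{extension}) is already in place. For $R_{D}=T_1$ and $R_{A}=T_2$, the key observation is that each of these one-parameter subgroups projects \emph{onto} the diagonal maximal torus $\Delta:=\{\diag(a,a^{-1})\}\subset\SL(2,\CC)$ in both $\SL(2,\CC)$-factors — for $T_2$ this uses $\{\diag(t^3,t^{-3}):t\in\CC^*\}=\Delta$. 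Therefore any $(B,C,1)$ normalising $R_D$ or $R_A$ must satisfy $B,C\in N_{\SL(2,\CC)}(\Delta)=\Delta\sqcup\{\text{antidiagonal matrices}\}$. Running through the four sign patterns and requiring that conjugation carry the subgroup back to itself, one finds that $R_D$ (resp.\ $R_A$) is preserved exactly when $B,C$ are simultaneously diagonal or simultaneously antidiagonal; the first alternative yields $T$ and the second the coset of generalised permutation matrices, so $N_{G^0}(R_D)=N_{G^0}(R_A)=S$. Since $\tau$ fixes $R_D$ pointwise, $N(R_D)=S\rtimes\mm$. For $R_A$, however, no element of $G\smallsetminus G^0$ normalises it: conjugating $(\diag(t,t^{-1}),\diag(t^3,t^{-3}),1)$ by $(B,C,-1)$ produces a pair whose two components have eigenvalues $\{t^{\pm3}\}$ and $\{t^{\pm1}\}$ respectively, which can never equal $(\diag(s,s^{-1}),\diag(s^3,s^{-3}),1)$ for any $s\in\CC^*$. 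Hence $N(R_A)=S$, with no extra $\mm$-factor.

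I do not expect a genuine obstacle; the computation is elementary once the semidirect-product bookkeeping is pinned down. The only point requiring care is the contrast between cases (i)--(ii) and case (iii): it is the asymmetry of the exponents $(1,3)$ defining $T_2$ — as opposed to the $\mm$-symmetry of $T_1$ and of the diagonal $\SL(2,\CC)$ — that makes $N(R_A)$ lack the extension by $\mm$ enjoyed by $N(R_{C})$ and $N(R_D)$. In the two extension cases one should also verify that $\tau$ genuinely realises $N(R)$ as the internal semidirect product $N_{G^0}(R)\rtimes\mm$ with the action inherited from $G$, which is what the last sentence of the statement asserts.
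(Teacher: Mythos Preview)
Your argument is correct and complete. For parts (ii) and (iii) the paper simply declares the computation ``straightforward from the definition of normaliser''; your explicit verification via the projection onto the diagonal torus $\Delta$ in each factor and the four sign-pattern cases is exactly the computation being alluded to, and your eigenvalue argument ruling out any $\mm$-extension for $R_A$ is the right one.

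For part (i) you take a genuinely different and cleaner route than the paper. The paper first observes $H\subseteq N_{G^0}(R_C)$ since $R_C$ has index two in $H$, and then for the reverse inclusion argues indirectly: given $n\in N_{G^0}(R_C)$, it finds $g\in R_C$ with $gn\in T\cap N_{G^0}(R_C)$ (using that all maximal tori of $R_C$ are conjugate), and then checks by a matrix computation that $T\cap N_{G^0}(R_C)\subset H$. Your approach bypasses this reduction entirely: the condition that $(B,C,1)$ normalise $R_C$ is literally $BAB^{-1}=CAC^{-1}$ for all $A\in\SL(2,\CC)$, i.e.\ $C^{-1}B\in Z(\SL(2,\CC))=\{\pm I\}$, giving $N_{G^0}(R_C)=H$ in one line. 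This is more direct; the paper's torus-reduction method is a general technique (reducing normaliser questions to the maximal torus) but is overkill here.
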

\begin{proof}
	The proof of (2) and (3) is straightforward from the definition of normaliser.
	
	In the case (1), we prove that the normaliser $ N' $ of $ R_{C} $ in $ G^0=\SL(2, \CC)\times \SL(2, \CC) $ is $ H $, then the statement will follow from the symmetry of $ R_{C} $. Since $ R_{C} $ has index two in $ H $, it is normal in $ H $, hence $ H\subseteq N' $. For the converse, we need:
	
	\textit{Claim: For every $ n\in N' $, there exists a $ g\in R_{C} $ with $ gn\in T\cap N' $, where $ T $ is the maximal torus.}
	
	Indeed, any element $ n\in N' $ must conjugate the standard maximal torus $ R_D \subset R_{C} $. Since all the maximal tori in $ R_{C} $ are conjugate under the action of $ R_{C} $, it follows that there must exists a $ g'\in R_{C} $ such that $ \tilde{n}=g'n $ fixes the maximal torus $ R_D $, that is to say $ \tilde{n} $ belongs to the normaliser $ S $ of $ R_D $ in $ G^0 $. If $ \tilde{n}:=g'n\in T $, just take $ g=g' $. Otherwise $ \tilde{n}\in \sigma T$, where
	\[  \sigma:= \left ( \begin{pmatrix} 
	0 & 1 \\ 
	-1 & 0 
	\end{pmatrix}, \begin{pmatrix} 
	0 & 1 \\
	-1 & 0 
	\end{pmatrix}, 1 \right ) \in R_{C}. \]
	In this case, take $ g=\sigma^{-1}g' $ and the claim is proven.  
	
	By a straightforward matrix computation, we have that $ T\cap N'\subset H $. Now we can prove that $ N'\subseteq H $. Indeed, for every element $ n\in N' $, there is a $ g\in R_{C} $ with $ gn\in T\cap N'\subset H $, by the \textit{Claim}. Therefore $ n\in g^{-1}H=H $.  
\end{proof}
\subsection{Main error terms} This subsection is devoted to computing the main error terms for all the three stages of the partial desingularization.
\subsubsection{Triple conic}
As we have seen, the first step in the \textit{Kirwan blow-up} process is to blow-up the locus corresponding to triple conics.
\begin{prop}\label{prop:main3C}
	For the group $ R_{C}\cong \SL(2, \CC) $, the main term of $ A_{R_C}(t) $ is given by
	\begin{align*}
	P_t^{N(R_{C})}(Z_{R_{C}}^{ss})(t^2+...+t^{2(\rk \calN^{R_{C}}-1)})&=\frac{t^2+...+t^{22}}{1-t^4}\\
	&\equiv t^2+t^4+2t^6+2t^8 \ \mod t^{10}.\\
	\end{align*} 
\end{prop}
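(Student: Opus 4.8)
The plan is to evaluate the two factors of the main term separately, using the description of $N(R_C)$ from Proposition \ref{prop:normal}, the fact (Proposition \ref{prop:blow-up}) that $Z_{R_C}^{ss}=\{3C\}$ is a single point, and the rank formula (\ref{formula:rank}).

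\emph{Step 1: the equivariant cohomology factor.} Since $Z_{R_C}^{ss}$ is a point, $P_t^{N(R_C)}(Z_{R_C}^{ss})=P_t(BN(R_C))$. By Proposition \ref{prop:normal}, $N(R_C)=H\rtimes\mm$ with $H$ sitting in the central extension (\ref{extension}) by the finite group $\mu_2\times\mu_2$. Over $\QQ$, finite kernels are invisible, so the projection $H\to\PGL(2,\CC)$ induces $H^*(BH;\QQ)\cong H^*(B\PGL(2,\CC);\QQ)$, and since $\PGL(2,\CC)\cong\SO(3,\CC)$ we get $H^*(BH;\QQ)=\QQ[p_1]$ with $\deg p_1=4$. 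It remains to check the $\mm$-action is trivial: the generator of $\mm$ acts on $G^0$ by interchanging the two $\SL(2,\CC)$-factors, which carries $H$ to itself (it sends $(A,\pm A)$ to $(\pm A, A)$) and descends to the identity on the quotient $\PGL(2,\CC)$, because the class of $(\pm A, A)$ is again $\bar A$. Hence, using that rational equivariant cohomology of a group is the $\pi_0$-invariants of that of its identity component, $H^*_{N(R_C)}(\mathrm{pt};\QQ)=H^*(BH;\QQ)^{\mm}=\QQ[p_1]$, so that $P_t^{N(R_C)}(Z_{R_C}^{ss})=\frac{1}{1-t^4}$.

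\emph{Step 2: the rank of the normal bundle.} From (\ref{formula:rank}), $\rk\calN^{R_C}=\dim X-(\dim G+\dim Z_{R_C}^{ss}-\dim N(R_C))$. Here $\dim X=15$, $\dim G=\dim(\SL(2,\CC)\times\SL(2,\CC))=6$, $\dim Z_{R_C}^{ss}=0$, and $\dim N(R_C)=\dim H=\dim\PGL(2,\CC)=3$ (the finite factors contribute nothing to dimension). Thus $\rk\calN^{R_C}=15-(6+0-3)=12$, so the polynomial factor of the main term is $t^2+t^4+\cdots+t^{2(12-1)}=t^2+t^4+\cdots+t^{22}$.

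\emph{Step 3: conclusion.} Multiplying the two factors gives $P_t^{N(R_C)}(Z_{R_C}^{ss})(t^2+\cdots+t^{2(\rk\calN^{R_C}-1)})=\dfrac{t^2+t^4+\cdots+t^{22}}{1-t^4}$, and expanding $\frac{1}{1-t^4}=1+t^4+t^8+\cdots$ and truncating yields $t^2+t^4+2t^6+2t^8$ modulo $t^{10}$, as claimed. The only delicate point in the argument is the triviality of the $\mm$-action on $H^*(BH;\QQ)$ (which ultimately rests on $\PGL(2,\CC)$ having no outer automorphisms); the rest is a direct substitution into Kirwan's formula in Theorem \ref{thm:cohkirblow}.
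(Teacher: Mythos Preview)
Your proof is correct and follows essentially the same approach as the paper's: both reduce $P_t^{N(R_C)}(Z_{R_C}^{ss})$ to $H^*(B\PGL(2,\CC))^{\mm}$ via the central extension (\ref{extension}), argue that the $\mm$-action is trivial because the swap of factors fixes the diagonal $\PGL(2,\CC)$, and then plug $\rk\calN^{R_C}=12$ into the rank formula (\ref{formula:rank}). Your parenthetical remark about outer automorphisms is a slight overreach---the argument only needs that this particular $\mm$-action descends to the identity, which you in fact show directly.
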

\begin{proof}
	We saw in Proposition \ref{prop:blow-up} that $ Z_{R_{C}}^{ss} $ consists of a single point, and in Proposition \ref{prop:normal} the normaliser $ N(R_{C}) $ and in (\ref{formula:rank}) how to compute the rank of the normal bundle, leading to:
	\begin{center}
		$ H^*_{N(R_{C})}(Z_{R_{C}}^{ss})=H^*(BN(R_C))=H^*(B(H\rtimes \mm))=H^*(BH)^{\mm}, $\\
		$ \rk \calN^{R_{C}}= \dim X-(\dim G+\dim Z_{_{R_{C}}}^{ss} -\dim N(R_{C}))=12. $ 
	\end{center}
	
	Now we recall that $ H $ fits into the central extension (\ref{extension}):
	\[  1\rightarrow \mu_2 \times \mu_2 \rightarrow H \rightarrow \PGL(2, \CC) \rightarrow 1, \]
	hence $ H^*(BH)^{\mm}=H^*(B\PGL(2, \CC))^{\mm} $, with the induced $ \mm $-action. From the description of $ R_{C} $, we saw that this copy of $ \PGL(2, \CC) $ must be thought as diagonally embedded in $ \PGL(2, \CC)\times \PGL(2, \CC) $, and $ \mm $ simply interchanges the two factors, acting trivially on the diagonal. This means that 
	\[ H^*_{N(R_{C})}(Z_{R_{C}}^{ss})=H^*(B\PGL(2, \CC))^{\mm}=H^*(B\PGL(2, \CC))\]
	and $ P_t^{N(R_{C})}(Z_{R_{C}}^{ss})=P_t(B\PGL(2, \CC))=(1-t^4)^{-1} $.
\end{proof}

\subsubsection{D-curves} In the second step, we need to blow up the locus of \textit{D-curves}.

\begin{prop}\label{prop:mainD}
	For the group $ R_{D}\cong \CC^* $, the main term of $ A_{R_D}(t) $ is given by
	\begin{align*}
	P_t^{N(R_D)}(Z_{R_D, 1}^{ss})(t^2+...+t^{2(\rk \calN^{R_D}-1)})&=\frac{1+t^2}{1-t^2}(t^2+...+t^{14})\\
	&\equiv t^2+3t^4+5t^6+7t^8 \ \mod t^{10}.\\
	\end{align*} 
\end{prop}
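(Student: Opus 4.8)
The plan is to determine the two factors on the right‑hand side separately. Since this is the \emph{second} modification, everything takes place inside the semistable locus $X_1^{ss}$ of the first (triple‑conic) blow‑up, so $Z_{R_D}^{ss}$ is replaced by its proper transform $Z_{R_D,1}^{ss}$; the subgroup $R_D\cong\CC^*$ and its normaliser $N(R_D)=S\rtimes\mm$ of Proposition \ref{prop:normal} are unchanged, and $\dim X_1=\dim X=15$. I would first compute $\rk\calN^{R_D}$ from formula (\ref{formula:rank}) (valid at every stage by \cite[5.10]{Kir85}). By Proposition \ref{prop:blow-up}, $Z_{R_D}^{ss}=\PP^3\smallsetminus\{a=b=0,\ c=d=0\}$ is dense open in $\PP^3$, so $\dim Z_{R_D,1}^{ss}=\dim Z_{R_D}^{ss}=3$; by Proposition \ref{prop:normal}, $\dim N(R_D)=\dim S=2$; and $\dim G=6$. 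Hence $\rk\calN^{R_D}=15-(6+3-2)=8$, so the second factor is $t^2+t^4+\dots+t^{14}=t^2+\dots+t^{2(\rk\calN^{R_D}-1)}$, exactly as on the right.

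Next I would pin down the centre of the $R_D$‑blow‑up. Writing a curve in $Z_{R_D}^{ss}$ as a product of three $(1,1)$‑conics through the two base points, the triple conics lying in $Z_{R_D}^{ss}$ form the twisted cubic $\Gamma=\{(\alpha^3:3\alpha^2\beta:3\alpha\beta^2:\beta^3)\}\subset\PP^3$ with the two points on the two removed lines deleted; call this locally closed curve $\Gamma^\circ\cong\CC^*$. Since $Z_{R_D}^{ss}\not\subset G\cdot Z_{R_C}^{ss}$ and $Z_{R_D}^{ss}\cap G\cdot Z_{R_C}^{ss}=\Gamma^\circ$, the proper transform is $Z_{R_D,1}^{ss}=\Bl_{\Gamma^\circ}Z_{R_D}^{ss}$; one checks via the semistability criteria of \cite{Fed12} and \cite{CMJL14} applied to the modified linearisation that no additional semistable points are discarded. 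As $N(R_D)$ preserves $Z_{R_D}^{ss}$ and $\Gamma^\circ$, it acts on $Z_{R_D,1}^{ss}$.

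The heart of the computation is $P_t^{N(R_D)}(Z_{R_D,1}^{ss})$. Since $R_D$ fixes $Z_{R_D,1}^{ss}$ pointwise and $T=N(R_D)^0\cong R_D\times\bar T$ with $\bar T:=T/R_D\cong\CC^*$ acting effectively, one gets $H^*_T(Z_{R_D,1}^{ss})=H^*(BR_D)\otimes H^*_{\bar T}(Z_{R_D,1}^{ss})$, that is $P^T_t=(1-t^2)^{-1}P^{\bar T}_t$. The induced $\bar T$‑action on $\PP^3$ has weights $(3,1,-1,-3)$ on the coordinates, so its only positive‑dimensional fixed loci are the four coordinate points, all of which lie on the two removed lines; hence $\bar T$ acts on both $Z_{R_D}^{ss}$ and $\Gamma^\circ$ with only finite stabilisers, and $H^*_{\bar T}(-)=H^*(-/\bar T)$ on each. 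The geometric quotient $Z_{R_D}^{ss}/\bar T=\PP^3/\!\!/_{(3,1,-1,-3)}\CC^*$ is a complete (possibly singular) toric surface with four torus‑invariant curves, hence rationally $P_t=1+2t^2+t^4$; as $\bar T$ acts transitively on $\Gamma^\circ$ with image a smooth point, $Z_{R_D,1}^{ss}/\bar T$ is the blow‑up of that surface at a point, with $P_t=1+3t^2+t^4$. It then remains to take $\pi_0(N(R_D))$‑invariants: $\pi_0(N(R_D))=N(R_D)/T\cong\mm\times\mm$, generated by the class of a generalised permutation matrix in $S$ and by the ruling involution. The former inverts $R_D$, hence acts on $H^*(BR_D)=\QQ[c]$ by $c\mapsto-c$, while both generators act on $Z_{R_D,1}^{ss}/\bar T$ through the descended ruling swap, which fixes the exceptional class and interchanges the two ruling classes in $H^2$. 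Collecting invariants yields $H^*_{N(R_D)}(Z_{R_D,1}^{ss})=H^*\!\big(Z_{R_D,1}^{ss}/\bar T\big)^{\mathrm{swap}}\otimes\QQ[c^2]$, with Poincar\'e series $(1+2t^2+t^4)(1-t^4)^{-1}=(1+t^2)(1-t^2)^{-1}$, as claimed.

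Finally, multiplying the two factors and expanding $\frac{1+t^2}{1-t^2}=1+2t^2+2t^4+2t^6+\cdots$ against $t^2+t^4+\cdots+t^{14}$ gives $t^2+3t^4+5t^6+7t^8\pmod{t^{10}}$. The step I expect to be the main obstacle is the third one: correctly identifying the GIT quotient $Z_{R_D}^{ss}/\bar T$ and the smooth point blown up in it, and then tracking the $\pi_0(N(R_D))$‑action on \emph{both} tensor factors simultaneously — in particular the easily‑overlooked sign $c\mapsto-c$ on $H^*(BR_D)$, which is what makes the division by $1-t^4$ (rather than $1-t^2$) correct.
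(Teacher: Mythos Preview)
Your approach is correct and reaches the same conclusion, but it is organised quite differently from the paper's proof.

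\textbf{Comparison.} The paper works entirely at the $N$-equivariant level: it uses the equivariant blow-up formula
\[
P_t^{N}(\Bl_{C^{ss}}(\PP^3)^{ss})=P_t^{N}((\PP^3)^{ss})+t^2P_t^{N}(C^{ss}),
\]
then computes $P_t^{N}((\PP^3)^{ss})$ by running the HKKN stratification for the full disconnected group $N$ on $\PP^3$ (invoking Molien's formula for the $\pi_0 N$-action on $H^*(BT)$), and computes $P_t^{N}(C^{ss})$ by identifying the $N$-stabiliser of the point $3C$. The semistability step is handled abstractly via \cite[1.9]{Kir86} and \cite[3.2]{Kir85}: every point of $Z_{R}^{ss}$ is $N/R$-stable, hence so is every point of the blow-up, and nothing is discarded. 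By contrast, you factor $H^*_T=H^*(BR_D)\otimes H^*_{\bar T}$, pass to the \emph{geometric} quotient by $\bar T=T/R_D$ (a concrete toric surface with $P_t=1+2t^2+t^4$), blow up one point, and only then take $\pi_0 N$-invariants on both tensor factors. Your route gives a more explicit geometric picture (the toric surface and its blow-up), at the cost of having to track the $\pi_0 N$-action on two tensor factors simultaneously; the paper's route is more uniform and avoids ever naming the quotient surface.

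\textbf{Two small caveats.} First, the isomorphism you write as ``$T=R_D\times\bar T$'' is not literally a direct product (the obvious map $R_D\times(\text{antidiagonal})\to T$ has kernel $\mu_2$); what you actually use, and what is correct, is $H^*_T(Z)=H^*(BR_D)\otimes H^*_{T/R_D}(Z)$, which follows simply because $R_D$ acts trivially. Second, your key assertion that the swap ``interchanges the two ruling classes in $H^2$'' is stated without proof; it is true, but you should justify it. One clean way is to redo your HKKN computation for the enlarged group $\bar T\rtimes\langle\tau\rangle$ acting on $\PP^3$: the swap acts trivially on $H^*(\PP^3)$, by inversion on $\bar T$ (hence by $\xi\mapsto-\xi$ on $H^*(B\bar T)$), and pairs the four unstable strata into two, giving
\[
P_t^{\bar T}((\PP^3)^{ss})^{\text{swap}}=\frac{1+t^2+t^4+t^6}{1-t^4}-\frac{t^4+t^6}{1-t^2}=1+t^2+t^4,
\]
so the swap does kill one class in $H^2$, as you need. (Incidentally, both $\sigma$ and $\tau$ act on $Z_{R_D}^{ss}$ by $(a{:}b{:}c{:}d)\mapsto(d{:}c{:}b{:}a)$ and both invert $\bar T$, which confirms your claim that they induce the same map on $Z/\bar T$.) With this filled in, your argument is complete.
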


\begin{proof} For brevity, write $ R=R_D $ and $ N=N(R_D)=S\rtimes \mm $ (see Proposition \ref{prop:blow-up} and \ref{prop:normal}). Recall that $ Z_{R, 1}^{ss} $ is the strict transform of $ Z_{R}^{ss} $ in $ X_1^{ss} $ under the first blow-up. We want to give an easier to handle geometric description of $ Z_{R, 1}^{ss} $. 
	
	We saw in Proposition \ref{prop:blow-up} that $ Z_{R}^{ss}=\PP\{ ax_0^3y_1^3+bx_0^2x_1y_0y_1^2+cx_0x_1^2y_0^2y_1+dx_1^3y_0^3 \}\smallsetminus \{ a=b=0, c=d=0 \} $. The centre of the first blow-up is the orbit of the triple conic $ 3C $ which intersects $ Z_{R}^{ss} $ along the twisted cubic
	\[ G\cdot 3C \cap Z_R^{ss}= C^{ss}=\{ (u^3:3u^2v:3uv^2:v^3): (u:v)\in \PP^1, \ u, v\neq 0 \}\subset Z_{R}^{ss}, \] 
	corresponding to the union of three conics that are actually coincident. Therefore
	\[ Z_{R, 1}^{ss}=(\Bl_{C^{ss}}Z_{R}^{ss})^{ss}, \]
	because we recall that, after taking the proper transform, one should restrict only to the semistable points in $ X_1 \rightarrow X$ for the induced action of $ G $. We want to stress that the Kirwan blow-up is a blow-up, followed by a restriction to the semistable points. Nevertheless, by \cite[1.9]{Kir86}, $ Z_{R, 1}^{ss} $ is the set of semistable points for the natural action of $ N/R $ on $ \Bl_{C^{ss}}Z_{R}^{ss} $. But every point of $ Z_{R}^{ss} $ is actually stable for the action of $ N/R $ and it will remain stable after the blow up (see \cite[3.2]{Kir85}). This means that every point of $ \Bl_{C^{ss}}Z_{R}^{ss} $ is indeed stable and, a fortiori semistable for $ N/R $, hence:
	\[ Z_{R, 1}^{ss}=\Bl_{C^{ss}}Z_{R}^{ss}. \]
	In conclusion, $ Z_{R, 1}^{ss} $ is the blow-up of $ \PP^3\smallsetminus \{ a=b=0, c=d=0 \} $ along the twisted cubic $ C^{ss} $ and we need to compute $P_t^{N}(Z_{R,1}^{ss})=P_t^N(\Bl_{C^{ss}}(\PP^3)^{ss}) $. 
	According to (\ref{cohomologyblowup}), the equivariant cohomology of the blow-up is related to the centre by the formula:
	\[ P_t^N(\Bl_{C^{ss}}(\PP^3)^{ss})=P_t^N((\PP^3)^{ss})+t^2P_t^N(C^{ss}). \]
	The action of $ N $ on $ C^{ss} $ is transitive and the stabiliser of $ 3C=(1:-3:3:-1) $ in $ N $ is $ (H\cap S)\rtimes \mm $, where
	\[ H\cap S=\left \{ \left (\begin{pmatrix} 
	 \lambda & 0 \\
	0 & \lambda^{-1} 
	\end{pmatrix}, \begin{pmatrix} 
	 \pm \lambda&0 \\
	0& \pm \lambda^{-1} 
	\end{pmatrix}, 1\right ) : \lambda \in \CC^* \right \}
	\sqcup \left \{ \left (\begin{pmatrix} 
	0 &\eta \\
	-\eta^{-1} & 0 
	\end{pmatrix}, \begin{pmatrix} 
	0 & \pm \eta \\
	\mp \eta^{-1} & 0 
	\end{pmatrix}, 1\right ) : \eta \in \CC^* \right \}\]
	so $ P_t^N(C^{ss})=P_t(B(H\cap S))^{\mm} $. The natural homomorphism $ \SL(2, \CC)\times \SL(2, \CC)\rightarrow \PGL(2, \CC)\times \PGL(2, \CC) $ induces a central extension:
	\[ 1\rightarrow \mu_2 \times \mu_2 \rightarrow H\cap S \rightarrow K \rightarrow 1, \]
	where $ K\subset \PGL(2, \CC)\times \PGL(2, \CC) $ is the image of $ H\cap S $. Here $ K $ has a structure of semidirect product $ \CC^*\rtimes \mathrm{S}_2 $, where $ \mathrm{S}_2 $ acts on $ \CC^* $ by inversion. Hence
	\begin{align*}
	H^*_N(C^{ss})&=H^*(B(H\cap S))^{\mm}\\
	&=H^*(BK)^{\mm}\\
	&=(H^*(B\CC^*)^{\mathrm{S}_2})^{\mm}\\
	&=(\QQ[c]^{\mathrm{S}_2})^{\mm}=\QQ[c^2],
	\end{align*}
	because $ \mathrm{S}_2 $ acts on $ H^2(B \CC^*)=\QQ\langle c \rangle $ by $ c\leftrightarrow -c $ and $ \mm $ does trivially. This means that $ P_t^N(C^{ss})=(1-t^4)^{-1} $.
	
	Now we compute $ P_t^N((\PP^3)^{ss}) $: we consider the action of $ N $ on $ \PP^3\cong Z_{R} $ and the usual equivariantly perfect stratification (Theorem \ref{thm:strata} and \ref{thm:equi}), giving 
	\[ P_t^N((\PP^3)^{ss})=P_t^N(\PP^3)-\sum_{0\neq\beta \in \calB }t^{2 d(\beta)}P_{t}^{\Stab \beta}(Z_\beta^{ss}). \]
	Firstly we compute $ P_t^N(\PP^3) $. Notice that $ N $ is disconnected, with connected component of the identity equal to $ N^0=T $, and $ \pi_0(N)=\mm \rtimes \mm =\mm \times \mm $. Since $ \mm \times \mm $ acts by linear transformation on $ \PP^3 $, it acts trivially on cohomology $ H^*(\PP^3)=\QQ[h]/(h^4) $ and hence
	\begin{align*}
	H^*_N(\PP^3)&=(H^*(\PP^3)\otimes H^*(BT))^{\mm \times \mm}\\
				&=\QQ[h]/(h^4)\otimes  \QQ[c_1, c_2]^{\mm \times \mm},
	\end{align*}
	where $ \deg(c_1)=\deg(c_2)=2 $ and the action of $ \mm \times \mm $ on $ H^2(BT)=\QQ\langle c_1, c_2 \rangle $ is represented by the matrices
	\[ \begin{pmatrix} 
	-1 & 0 \\
	0 & -1 
	\end{pmatrix} \ \text{and} \ \begin{pmatrix} 
	0 & 1 \\
	1 & 0 
	\end{pmatrix}. \]
	By Molien's formula, we find that $ P_t^N(\PP^3)=(1+t^2+t^4+t^6)(1-t^4)^{-2} $.
	
	Since the action of $ T $ on $ \PP^3 $ has weights
	\[ (3, -3), \ (1, -1), \ (-1, 1), \ (-3, 3), \]
	 which correspond to the weights on the antidiagonal of the \textit{Hilbert diagram} (Figure \ref{fig:1}) in the Lie algebra $ \mathfrak{t} $, the indexing set of this stratification is $ \calB=\{ (0,0), (1, -1), (3, -3) \} $. The real codimension of the strata are $ 2d((1, -1))=4 $ and $ 2d((3, -3))=6 $, while for both indices $ Z_{\beta}=Z_{\beta}^{ss}=\PP^0 $ and 
	\[ \Stab \beta =  \langle T, \iota \rangle \cong (\CC^*)^2\rtimes \mm, \] 
	as in Table \ref{tab:1}, so that by Molien's formula $ P_{t}^{\Stab \beta}(Z_\beta^{ss})=P_t(B\Stab \beta)=(1-t^2)^{-1}(1-t^4)^{-1} $.
	In conclusion, putting everything together, we get:
	\[ P_t^N(\Bl_{C^{ss}}(\PP^3)^{ss})=\frac{1+t^2+t^4+t^6}{(1-t^4)^2}-\frac{t^4+t^6}{(1-t^2)(1-t^4)}+\frac{t^2}{1-t^4}=\frac{1+t^2}{1-t^2}. \]
The result follows by computing the rank $ \rk \calN^R $, via the formula (\ref{formula:rank}).
\end{proof}
\subsubsection{A-curves} In the last step we need to blow up the locus of \textit{A-curves}. Recall that this locus remains unaltered after the first two resolutions.
\begin{prop}
	For the group $ R_{A}\cong \CC^* $, the main term of $ A_{R_A}(t) $ is given by
	\begin{align*}
	P_t^{N(R_{A})}(Z_{R_{A}}^{ss})(t^2+...+t^{2(\rk \calN^{R_{A}}-1)})&=\frac{t^2+...+t^{18}}{1-t^4}\\
	&\equiv  t^2+t^4+2t^6+2t^8 \ \mod t^{10}.\\
	\end{align*} 
\end{prop}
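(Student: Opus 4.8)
The plan is to follow the template of Propositions~\ref{prop:main3C} and~\ref{prop:mainD}. By Proposition~\ref{prop:blow-up} the locus $Z_{R_A}^{ss}=\PP\{ax_0^3y_0y_1^2+bx_1^3y_0^2y_1\}\smallsetminus\{a=b=0\}$ is a projective line with two points deleted, so $Z_{R_A}^{ss}\cong\CC^*$, and by Proposition~\ref{prop:normal} we have $N:=N(R_A)=S$. Since, again by Proposition~\ref{prop:blow-up}, the orbit $G\cdot Z_{R_A}^{ss}$ is disjoint from the centres of the first two blow-ups, this locus and its normal bundle in $X_2^{ss}$ coincide with those in the original $X^{ss}$, so formula~(\ref{formula:rank}) applies with the original data:
\[ \rk\calN^{R_A}=\dim X-\bigl(\dim G+\dim Z_{R_A}^{ss}-\dim N\bigr)=15-(6+1-2)=10, \]
which accounts for the polynomial factor $t^2+t^4+\dots+t^{18}$.

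The substantive step is the identity $P_t^{N}(Z_{R_A}^{ss})=(1-t^4)^{-1}$. First I would observe that $S$ acts transitively on $Z_{R_A}^{ss}\cong\CC^*$: the one-parameter subgroup $\{(\diag(\lambda,\lambda^{-1}),I,1):\lambda\in\CC^*\}$ already scales the ratio $[a:b]$ by an arbitrary element of $\CC^*$ (the two monomials carry the weights $(-3,1)$ and $(3,-1)$, cf.~(\ref{eq:weight})). Hence $Z_{R_A}^{ss}$ is a single $S$-orbit, so writing $\Gamma:=\Stab_S(C_{2A_5})$ for the stabiliser of the point $[1:1]$ we get $H^*_N(Z_{R_A}^{ss})=H^*(B\Gamma;\QQ)=H^*(BR_A;\QQ)^{\pi_0(\Gamma)}$, since $\Gamma^0=R_A$. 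A direct check then shows $\pi_0(\Gamma)\cong\mm\times\mm$, generated by the class of $(I,-I,1)\in G^0$ --- which is central in $G^0$, hence acts trivially on $R_A$ and on $H^*(BR_A)$ --- and by the class of a generator $\sigma$ of $\pi_0(N)$ lying in $\Gamma$, which conjugates $T$, and therefore $R_A\subset T$, by inversion, so that it sends the degree-$2$ generator $c\in H^*(BR_A)$ to $-c$. Consequently the invariant ring is $\QQ[c^2]$ and $P_t^{N}(Z_{R_A}^{ss})=(1-t^4)^{-1}$.

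Multiplying the two contributions gives the main term $\dfrac{t^2+t^4+\dots+t^{18}}{1-t^4}$, and a routine power-series expansion reduces it to $t^2+t^4+2t^6+2t^8$ modulo $t^{10}$, as claimed. The step I expect to demand the most care is the second one: identifying the $S$-action on $Z_{R_A}^{ss}\cong\CC^*$ precisely enough to confirm that the identity component of the point-stabiliser is exactly $R_A=T_2$ --- rather than some other one-parameter subgroup --- and that the remaining components act on $H^*(BR_A)$ as stated, since this is what forces the collapse to $\QQ[c^2]$. Everything else is the same Molien-type bookkeeping already carried out for the triple conic and the $D$-curves.
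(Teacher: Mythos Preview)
Your proof is correct and follows essentially the same strategy as the paper: both arguments exploit transitivity of the $N(R_A)$-action on $Z_{R_A}^{ss}$ to reduce $P_t^{N}(Z_{R_A}^{ss})$ to a $\pi_0$-invariant calculation on $H^*(B\CC^*)=\QQ[c]$, concluding that the relevant involution sends $c\mapsto -c$ and hence the invariant ring is $\QQ[c^2]$. The only cosmetic difference is that the paper packages this via Kirwan's identity $H^*_N(Z_{R_A}^{ss})=(H^*(Z_{R_A}^{ss}/\!\!/N^0)\otimes H^*(BR_A))^{\pi_0 N}$ with $\pi_0 N(R_A)=\mm$, whereas you pass directly through the point-stabiliser $\Gamma$ and its larger component group $\pi_0(\Gamma)\cong\mm\times\mm$; since the extra central factor $(I,-I,1)$ acts trivially, the two computations coincide.
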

\begin{proof}
	 To compute $ P_t^{N(R_{A})}(Z_{R_{A}}^{ss}) $, we use the equality \cite[1.17]{Kir86}:
	\[ H_{N}^*(Z_{R_{A}}^{ss})=(H^*(Z_{R_{A}}/\!\!/ N^0(R_{A}))\otimes H^*(BR))^{\pi_0 N(R_{A})}. \]
	In our case the action of $ N(R_{A})^0=T $ on $Z_{R_{A}}^{ss} $ is transitive, hence
	\[ H^*(Z_{R_{A}}^{ss}/\!\!/T)=H^*(\text{point})=\QQ. \]
	Moreover $ \pi_0 N(R_{A})=\mm $ acts on $ R\cong \CC^* $ by inversion, so that
	\begin{align*}
	 H^*_{N(R_{A})}(Z_{R_{A}}^{ss})&= (H^*(Z_{R_{A}}^{ss}/\!\!/T)\otimes H^*(B\CC^*))^{\mm} \\
	 &=(\QQ\otimes \QQ[c])^{\mm}\\
	 &=\QQ[c^2],
	\end{align*}
	where $ \deg(c)=2 $ and the $ \mm $ operates on $ \QQ[c] $ by $ c\leftrightarrow -c $. Hence $ P_t^{N(R_{A})}(Z_{R_{A}}^{ss})=(1-t^4)^{-1} $. The result follows by computing the rank $ \rk \calN^{R_{A}} $, via formula (\ref{formula:rank}).
\end{proof}
\subsection{Extra terms}

To complete the computation of the contribution $ A_R(t) $, we need to calculate the extra terms, as stated in Theorem \ref{thm:cohkirblow}. The crucial point is to analyse for each $ R\in \calR $ the representation $ \rho: R \rightarrow \Aut (\calN_x^R) $ on the normal slice to the orbit $ G\cdot Z_R^{ss} $ at a generic point $ x\in Z_R^{ss} $. Since here we are dealing only with a local geometry around $ x $, we can restrict to consider the normal slice to the orbit $ G^0\cdot Z_R^{ss} $, which is the connected component of $ G\cdot Z_R^{ss} $ at $ x $.

\subsubsection{Tangent space to orbits for hypersurfaces in $ \PP^1 \times \PP^1 $}
If $ F\in H^0(\PP^1\times \PP^1, \calO_{\PP^1\times \PP^1}(d, d)) $ is a bihomogeneous form of bidegree $ (d,d) $, it will define a hypersurface $ V(F)\subset \PP^1\times \PP^1 $. We wish to describe the tangent space to the orbit $ \GL(2, \CC)\times \GL(2, \CC)\cdot F $. We are actually interested in the normal space to the orbit: 
$$ \SL(2, \CC)\times \SL(2, \CC)\cdot \{ V(F) \} \subset \PP H^0(\PP^1\times \PP^1, \calO_{\PP^1\times \PP^1}(d, d)). $$ 
However, since the normal space of any submanifold $ Y $ in a projective space $ \PP(W) $ can, via the Euler sequence, be identified with the normal space to its cone $ C(Y) \subset W $, we can alternatively study the $ \GL(2, \CC)\times \GL(2, \CC) $-orbit of $ F $ in $ H^0(\PP^1\times \PP^1, \calO_{\PP^1\times \PP^1}(d, d)) $, rather than the $ \SL(2, \CC)\times \SL(2, \CC) $-orbit of $ V(F) $ in $ \PP H^0(\PP^1\times \PP^1, \calO_{\PP^1\times \PP^1}(d, d)) $.

The strategy to compute the tangent space to the $ \GL(2, \CC)\times \GL(2, \CC) $-orbit of $ F $ is to work with the Lie algebra $ \mathfrak{gl}(2, \CC)\times \mathfrak{gl}(2, \CC) $ and use the exponential map $ \exp: \mathfrak{gl}(2, \CC)\times \mathfrak{gl}(2, \CC)\rightarrow \GL(2, \CC)\times \GL(2, \CC) $. Given an element $ e\in \mathfrak{gl}(2, \CC)\times \mathfrak{gl}(2, \CC) $, the derivative $ \frac{d}{dt}(\exp(te)F)_{t=0} $ gives a vector in the tangent space to the orbit $ \GL(2, \CC)\times \GL(2, \CC)\cdot F $. If we take a basis of $ \mathfrak{gl}(2, \CC)\times \mathfrak{gl}(2, \CC) $, we then obtain generators for the tangent space to the orbit $ \GL(2, \CC)\times \GL(2, \CC)\cdot F $. In practice, we choose the elementary matrices $ e_{ij}^1=(\delta_{ij})_{i,j=1,2 } $ as a basis of $ \mathfrak{gl}(2, \CC)\times 0 $ and $ e_{ij}^2=(\delta_{ij})_{i,j=1,2}$ for $ 0\times \mathfrak{gl}(2, \CC) $, then we indicate 
\[ (DF)^k_{ij}:=\frac{d}{dt}(\exp(te_{ij}^k)F)|_{t=0}, \quad 1\leq i, j, k\leq 2.\]
In conclusion, the tangent space to the orbit $ \GL(2, \CC)\times \GL(2, \CC)\cdot F $ is spanned by the entries of the matrix $DF=((DF)_{ij}^1|(DF)_{ij}^2)_{i,j=1,2} $.

Coming back to our situation, we carry this procedure out for the equations of strictly polystable hypersurfaces of $ \PP^1\times \PP^1 $ of bidegree $ (3,3) $. Indeed the tangent space to the orbit $ \GL(2, \CC)\times \GL(2, \CC)\cdot F $ is given by the span of the entries of the following matrices:
\begin{enumerate}[(i)]
	\item For $ F=ax_0^3y_1^3+bx_0^2x_1y_0y_1^2+cx_0x_1^2y_0^2y_1+dx_1^3y_0^3 $, the matrix $ DF=(DF^1|DF^2) $ is given by
	\begin{equation}\label{normal1D}
	DF^1=\left( 
	\begin{array}{cc}
	3ax_0^3y_1^3+2bx_0^2x_1y_0y_1^2+cx_0x_1^2y_0^2y_1 & 3ax_0^2x_1y_1^3+2bx_0x_1^2y_0y_1^2+cx_1^3y_0^2y_1\\
	bx_0^3y_0y_1^2+2cx_0^2x_1y_0^2y_1+3dx_0x_1^2y_0^3&
	bx_0^2x_1y_0y_1^2+2cx_0x_1^2y_0^2y_1+3dx_1^3y_0^3\\
	\end{array} \right),
	\end{equation}
	\begin{equation}\label{normal2D}
	DF^2=\left( 
	\begin{array}{cc}
	bx_0^2x_1y_0y_1^2+2cx_0x_1^2y_0^2y_1+3dx_1^3y_0^3&
	bx_0^2x_1y_1^3+2cx_0x_1^2y_0y_1^2+3dx_1^3y_0^2y_1\\
	3ax_0^3y_0y_1^2+2bx_0^2x_1y_0^2y_1+cx_0x_1^2y_0^3&
	3ax_0^3y_1^3+2bx_0^2x_1y_0y_1^2+cx_0x_1^2y_0^2y_1
	\end{array} \right).
	\end{equation}
	The set of linear relations satisfied by the entries of $ DF $ is
	\begin{equation}\label{eqD}
	\begin{cases}
	(DF)^1_{11}=(DF)^2_{22};\\
	(DF)^1_{22}=(DF)^2_{11}.
	\end{cases}
	\end{equation}
	\item For $ F=F_{3C} $, which corresponds to $ (a:b:c:d)=(1:-3:3:-1) $ in the above equations, the set of linear relations satisfied by the entries of $ DF_{3C} $ consists of the previous ones with the two further relations:
	\begin{equation}\label{eq3C}
	\begin{cases}
	(DF_{3C})^1_{12}+(DF_{3C})^2_{12}=0;\\
	(DF_{3C})^1_{21}+(DF_{3C})^2_{21}=0.
	\end{cases}
	\end{equation}
	\item For $ F=F_{C_{2A_5}}=y_0y_1(x_0^3y_1+x_1^3y_0), $ the matrix $ DF_{C_{2A_5}} $ is given by
	\begin{equation}\label{normal2A5}
	DF_{C_{2A_5}}=\left( 
	\begin{array}{cc|cc}
	3x_0^3y_0y_1^2 & 3x_0^2x_1y_0y_1^2 & x_0^3y_0y_1^2+2x_1^3y_0^2y_1&x_0^3y_1^3+2x_1^3y_0y_1^2 \\
	3x_0x_1^2y_0y_1^2 & 3x_1^3y_0^2y_1 &2x_0^3y_0^2y_1+x_1^3y_0^3& 2x_0^3y_0y_1^2+x_1^3y_0^2y_1
	\end{array}
	\right).
	\end{equation}
	The set of linear relations satisfied by the entries of $ DF_{C_{2A_5}} $ is
	\begin{equation}\label{eq2A5}
	\begin{cases}
	(DF)^1_{11}+(DF)^1_{22}=(DF)^2_{11}+(DF)^2_{22};\\
	(DF)^1_{22}-(DF)^1_{11}=3(DF)^2_{11}-3(DF)^2_{22}.
	\end{cases}
	\end{equation} 
\end{enumerate}

\subsubsection{Triple conic} We compute the extra contribution coming from the blow up of the triple conic.
\begin{prop}\label{prop:extra3C}
	For the group $ R_{C}\cong \SL(2, \CC) $ the extra term of $ A_{R_C}(t) $ is given by
	\begin{align*}
		\sum_{0\neq \beta'\in\calB(\rho)}\frac{1}{w(\beta', R_{C}, G)}t^{2d(\PP\calN^{R_{C}}, \beta')}P_t^{N(R_{C})\cap \Stab\beta'}(Z_{\beta', R_{C}}^{ss})&=\frac{t^{12}(1+t^2+t^4+t^6+t^8)}{1-t^2}\\
		&\equiv 0 \ \mod t^{10}.
	\end{align*} 
\end{prop}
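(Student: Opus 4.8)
The strategy is to run the recipe for the extra term in Theorem \ref{thm:cohkirblow} for $R = R_C \cong \SL(2,\CC)$: identify the representation $\rho$ of $R_C$ on the normal slice $\calN_x^{R_C}$ at $x = 3C$, read off the index set $\calB(\rho)$ of the induced $R_C$-action on $\PP\calN_x^{R_C}$, and then collect the codimensions $d(\PP\calN^{R_C},\beta')$, the fibres $Z_{\beta',\rho}$, the equivariant series $P_t^{N(R_C)\cap\Stab\beta'}$ and the multiplicities $w(\beta', R_C, G)$.

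First I would determine $\calN_x^{R_C}$ as an $R_C$-module. Since $(A,A)\cdot F_{3C} = (\det A)^3 F_{3C} = F_{3C}$ for $A\in\SL(2,\CC)$, the subgroup $R_C$ fixes $F_{3C}$ and therefore acts linearly on $\CC^{16} = H^0(\PP^1\times\PP^1,\calO_{\PP^1\times\PP^1}(3,3)) = \mathrm{Sym}^3(\CC^2)^\vee\otimes\mathrm{Sym}^3(\CC^2)^\vee$, which by Clebsch--Gordan decomposes under the diagonal $\SL(2,\CC)$ as the multiplicity-free module $\mathrm{Sym}^6\oplus\mathrm{Sym}^4\oplus\mathrm{Sym}^2\oplus\mathrm{Sym}^0$. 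As in \S\ref{subsec:settingblowup}, $\calN_x^{R_C}$ is the quotient of $\CC^{16}$ by the tangent space to the $\GL(2,\CC)\times\GL(2,\CC)$-orbit of $F_{3C}$; this tangent space is an $R_C$-submodule, and by the relations (\ref{eqD}) and (\ref{eq3C}) (equivalently, $\dim(\GL_2\times\GL_2) - \dim\Stab F_{3C} = 8-4$) it is $4$-dimensional. Because $\CC^{16}$ is multiplicity-free, its only $4$-dimensional submodule is $\mathrm{Sym}^2\oplus\mathrm{Sym}^0$, whence $\calN_x^{R_C}\cong\mathrm{Sym}^6(\CC^2)\oplus\mathrm{Sym}^4(\CC^2)$ and $\rk\calN^{R_C} = 12$, consistently with Proposition \ref{prop:main3C}.

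Next I would stratify $\PP\calN_x^{R_C} = \PP^{11}$ under $R_C$. The $T_{R_C}$-weights of $\calN_x^{R_C}$ form the multiset $\{6, 4^2, 2^2, 0^2, (-2)^2, (-4)^2, (-6)\}$; as $R_C$ has rank one, the nonzero elements of $\calB(\rho)$ are exactly the positive weights $\beta'_1, \beta'_2, \beta'_3$ corresponding to $2, 4, 6$. For each $i$, $Z_{\beta'_i,\rho}$ is the projectivisation of the corresponding weight space, so $Z_{\beta'_1,\rho} = Z_{\beta'_2,\rho} = \PP^1$ and $Z_{\beta'_3,\rho} = \PP^0$; since $T_{R_C}$ acts on each with a single weight, $Z_{\beta'_i,\rho}^{ss} = Z_{\beta'_i,\rho}$, so Remark \ref{rmk:cohextra} applies. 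The codimension is computed from (\ref{codim}) as $d(\PP\calN^{R_C},\beta'_i) = n(\beta'_i) - \dim R_C/P_{\beta'_i} = n(\beta'_i) - 1$, with $n(\beta'_i)$ the number of weights strictly below $\beta'_i$; this gives $6, 8, 10$, i.e. factors $t^{12}, t^{16}, t^{20}$.

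Finally I would evaluate the cohomological factors. Since $Z_{R_C}^{ss} = \{3C\}$ is a single point, the fibration of \S\ref{subsec:settingblowup} has $Z_{\beta'_i,R_C}^{ss} = Z_{\beta'_i,\rho}$, and Remark \ref{rmk:cohextra} gives $P_t^{N(R_C)\cap\Stab\beta'_i}(Z_{\beta'_i,R_C}^{ss}) = P_t\big(B(N(R_C)\cap\Stab\beta'_i)\big)\,P_t(Z_{\beta'_i,\rho})$. The group $N(R_C)\cap\Stab\beta'_i$ is independent of $i$: each $\beta'_i$ lies on the line $\mathrm{Lie}(T_{R_C}) = \mathrm{Lie}(R_D)$, whose $G$-stabiliser has identity component the maximal torus and finite part generated by the swap involution and by $(I,-I)$, all of which act trivially on $H^*(BR_D) = \QQ[c]$; hence $P_t(B(N(R_C)\cap\Stab\beta'_i)) = (1-t^2)^{-1}$. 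Moreover the $W(G) = D_8$-orbit of the diagonal direction meets $\mathrm{Lie}(T_{R_C})$ in a single $W(R_C)$-orbit, so $w(\beta'_i, R_C, G) = 1$. Assembling, the extra term equals $(1-t^2)^{-1}\big((1+t^2)t^{12} + (1+t^2)t^{16} + t^{20}\big) = t^{12}(1+t^2+t^4+t^6+t^8)/(1-t^2)$, which is $\equiv 0 \bmod t^{10}$. I expect the only delicate point to be the last paragraph: pinning down the finite part of $N(R_C)\cap\Stab\beta'_i$ and its trivial action on cohomology, and fixing the normalisation of $\beta'$ so that the codimension count yields the stated powers of $t$; the identification of $\rho$ is essentially forced by multiplicity-freeness and should cause no trouble.
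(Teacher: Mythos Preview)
Your proposal is correct and follows the same route as the paper: determine the weights of $\rho$ on $\calN_x^{R_C}$, read off $\calB(\rho)=\{2,4,6\}$, compute $d(\PP\calN^{R_C},\beta')=n(\beta')-1$ via (\ref{codim}), identify $N(R_C)\cap\Stab\beta'$ (the paper writes it as $\hat{T_1}\rtimes\mm$, exactly your $R_D$ together with $(I,-I)$ and the swap, all acting trivially on $H^*(BR_D)$), and set $w(\beta',R_C,G)=1$. The one pleasant variation is your use of Clebsch--Gordan and multiplicity-freeness to pin down $\calN_x^{R_C}\cong\mathrm{Sym}^6\oplus\mathrm{Sym}^4$ directly, which replaces the paper's explicit weight-subtraction in Lemma~\ref{lem:weights3C} and makes the weight list $\{(\pm6)^1,(\pm4)^2,(\pm2)^2,0^2\}$ immediate.
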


Firstly this lemma describes the weights of the representation $ \rho : R_{C} \rightarrow \Aut(\calN_x^{R_C}) $, where $ x=3C $.

\begin{lemma}\label{lem:weights3C}
	For $ R_{C}\cong \SL(2, \CC) $, $ \dim \calN_x^{R_{C}}=12 $, the weights of the representation $ \rho $ of $ R_{C} $ on $ \calN_x^{R_{C}} $ are as follows with the respective multiplicities
	\[ (\pm 6)\times 1, (\pm 4)\times 2, (\pm 2)\times 2, (0)\times 2. \]
\end{lemma}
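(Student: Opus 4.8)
The plan is to compute the weights of the maximal torus $T_{R_C}=R_D$ of $R_C\cong\SL(2,\CC)$ on the normal slice $\calN_x^{R_C}$, where $x=3C$; since a finite-dimensional $\SL(2,\CC)$-representation is determined up to isomorphism by the multiset of weights of its maximal torus, this is enough (and in fact it will pin down $\rho$ itself). First I would identify the slice explicitly. As $Z_{R_C}^{ss}=\{3C\}$ is a single point, $\calN_x^{R_C}$ is the normal space at $[F_{3C}]$ to the orbit $G^0\cdot[F_{3C}]\subset\PP^{15}$, which, via the Euler-sequence reduction set up above, I would identify with the quotient $W/T_{F_{3C}}\mathscr{O}$, where $W:=H^0(\PP^1\times\PP^1,\calO_{\PP^1\times\PP^1}(3,3))\cong\CC^{16}$, $\mathscr{O}:=\GL(2,\CC)\times\GL(2,\CC)\cdot F_{3C}$, and $T_{F_{3C}}\mathscr{O}$ is the span of the entries of the matrix $DF_{3C}$ of \eqref{normal1D}--\eqref{normal2D} evaluated at $(a,b,c,d)=(1,-3,3,-1)$, subject to the relations \eqref{eqD} and \eqref{eq3C}. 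Since the binary form $x_0y_1-x_1y_0$ is $\SL(2,\CC)$-invariant, the \emph{vector} $F_{3C}$, not merely the point $[F_{3C}]$, is fixed by $R_C$; the four relations then cut $T_{F_{3C}}\mathscr{O}$ down to dimension $8-4=4$, so $\dim\calN_x^{R_C}=16-4=12$, recovering the rank of $\calN^{R_C}$ computed in Proposition~\ref{prop:main3C}. Moreover $T_{R_C}$ is reductive, so $W\cong T_{F_{3C}}\mathscr{O}\oplus\calN_x^{R_C}$ as $T_{R_C}$-modules, and hence the weight multiset of $\calN_x^{R_C}$ is that of $W$ with the weight multiset of $T_{F_{3C}}\mathscr{O}$ removed.

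The second step is to read off these two multisets. Restricting the weight assignment \eqref{eq:weight} to the diagonal torus $T_{R_C}$, the monomial $x_0^ix_1^{3-i}y_0^jy_1^{3-j}$ has weight $(3-2i)+(3-2j)=6-2(i+j)$; letting $(i,j)$ range over $\{0,1,2,3\}^2$ gives the weight multiset of $W$: $\pm 6$ with multiplicity $1$, $\pm 4$ with multiplicity $2$, $\pm 2$ with multiplicity $3$, and $0$ with multiplicity $4$. For $T_{F_{3C}}\mathscr{O}$ I would use that the linear map $\mathfrak{gl}(2,\CC)\times\mathfrak{gl}(2,\CC)\to W$, $e\mapsto e\cdot F_{3C}$, is $T_{R_C}$-equivariant for the adjoint action on the source (again because $F_{3C}$ is $R_C$-fixed), together with the fact that the basis vectors $e_{ij}^k$ have adjoint $T_{R_C}$-weights $0$ (for $e_{11}^k,e_{22}^k$), $+2$ (for $e_{12}^k$) and $-2$ (for $e_{21}^k$). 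Imposing \eqref{eqD} and \eqref{eq3C}: the weight-$0$ part of $T_{F_{3C}}\mathscr{O}$ is spanned by $(DF_{3C})^1_{11}$ and $(DF_{3C})^1_{22}$, which are linearly independent (the former involves the monomial $x_0^3y_1^3$, the latter $x_1^3y_0^3$), while \eqref{eqD} identifies $(DF_{3C})^2_{11},(DF_{3C})^2_{22}$ with these, so this part has dimension $2$; relations \eqref{eq3C} give $(DF_{3C})^1_{12}=-(DF_{3C})^2_{12}$ and $(DF_{3C})^1_{21}=-(DF_{3C})^2_{21}$, so the weight-$(+2)$ and weight-$(-2)$ parts are $1$-dimensional each; no other weights occur. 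Thus $T_{F_{3C}}\mathscr{O}$ has weight multiset $\{0,0,+2,-2\}$.

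Finally, deleting $\{0,0,+2,-2\}$ from the weight multiset of $W$ leaves $\pm 6$ with multiplicity $1$, $\pm 4$ with multiplicity $2$, $\pm 2$ with multiplicity $2$, and $0$ with multiplicity $2$, which is the asserted list. As a cross-check this identifies $\rho$ with $\mathrm{Sym}^6\oplus\mathrm{Sym}^4$, which also drops out of the Clebsch--Gordan decomposition $W\cong\mathrm{Sym}^3\otimes\mathrm{Sym}^3\cong\mathrm{Sym}^6\oplus\mathrm{Sym}^4\oplus\mathrm{Sym}^2\oplus\mathrm{Sym}^0$ of $W$ as an $R_C$-module, after removing the tangent summand $\mathrm{Sym}^2\oplus\mathrm{Sym}^0$. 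The only step requiring actual care --- everything else being read off the Hilbert diagram --- is confirming the three weight-space dimensions of $T_{F_{3C}}\mathscr{O}$ after the relations \eqref{eqD} and \eqref{eq3C} are imposed, i.e.\ the linear-independence claims above; I expect this small verification to be the main obstacle.
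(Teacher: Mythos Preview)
Your proof is correct and follows essentially the same approach as the paper: compute the $T_{R_C}$-weights on $W\cong\CC^{16}$, determine the weight multiset of the tangent space to the $\GL(2,\CC)\times\GL(2,\CC)$-orbit via the generators $(DF_{3C})^k_{ij}$ together with the relations \eqref{eqD} and \eqref{eq3C}, and subtract. Your Clebsch--Gordan cross-check ($\mathrm{Sym}^3\otimes\mathrm{Sym}^3\cong\mathrm{Sym}^6\oplus\mathrm{Sym}^4\oplus\mathrm{Sym}^2\oplus\mathrm{Sym}^0$, with the tangent summand $\mathrm{Sym}^2\oplus\mathrm{Sym}^0$) is a pleasant addition not present in the paper, but the core argument is the same.
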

\begin{proof}
	The maximal torus $T_1=\{ (\diag(t, t^{-1}), \diag(t, t^{-1}), 1) \}$ in $ R_{C} $ acts on the coordinates $ ((x_0:x_1), (y_0:y_1)) $ diagonally. Thus  each monomial is an eigenspace for the action of $ T_1 $. Hence $ H^0(\calO_{\PP^1\times \PP^1}(3,3))=\CC^{16} $ decomposes as a sum of one-dimensional representations of $ T_1 $ with the following multiplicities of weights
	\[ (\pm 6)\times 1, (\pm4)\times 2, (\pm2)\times 3, (0)\times 4. \]
	The tangent space to the orbit $ G\cdot C_{3C} $ is generated by the entries of the matrices (\ref{normal1D}) and (\ref{normal2D}) at $ 3C $. Each polynomial spans an eigenspace for the action of $ T_1 $ with weight equal to
	\[ (\pm 2) \times 2, (0)\times 4. \]
	Now the relations (\ref{eqD}) are among the weight $ 0 $ generators, thus we may drop two of them in forming a basis of the tangent space. The two further relations (\ref{eq3C}) are among generators of weights $ 2 $ and $ -2 $, respectively, so we can drop one generator of weight $ 2 $ and $ -2 $. In total, the weights on the tangent space to the orbit are given by
	\[ (\pm 2) \times 1, (0)\times 2. \]
	By subtracting the weights of the representation of the tangent space to the orbit from the weights of the representation of $ T_1 $ on $ \CC^{16} $, we obtain the weights of the action on the normal space.
\end{proof}
\begin{proof}[Proof of Proposition \ref{prop:extra3C}]
	From the description of the weights of $ \rho $ in the Lemma \ref{lem:weights3C}, we see that we can take $ \calB(\rho)=\{ 0, 2, 4, 6 \} $. We can compute the codimension of the strata $ Z_{\beta', R_{C}}^{ss} $ by means of the formula (\ref{codim}):
	\[ d(\PP\calN_x^{R_{C}}, \beta')=n(\beta')-\dim(R_{C}/P_{\beta'}), \]
	where $ n(\beta') $ is the number of weights less than $ \beta' $ and $ P_{\beta'} $ is the associated parabolic subgroup of dimension 2. After noticing that for every weight, $ w(\beta', R_{C}, G)=1 $ and $N(R_{C})\cap \Stab\beta'=\hat{T_1}\rtimes \mm$, where $ \hat{T_1}:=\{ (\diag(t, t^{-1}), \diag(\pm t, \pm t^{-1}), 1): t\in \CC^* \} $ is a double cover of $ T_1 $, the result follows.
\end{proof}
\subsubsection{D-curves} We compute the extra contribution coming from the blow up of the \textit{D-curves}.
\begin{prop}\label{prop:extraD}
	For the group $ R_{D}\cong \CC^* $, the extra term of $ A_{R_D}(t) $ is given by
	\begin{align*}
	\sum_{0\neq \beta'\in\calB(\rho)}\frac{1}{w(\beta', R_{D}, G)}t^{2d(\PP\calN^{R_{D}}, \beta')}P_t^{N(R_{D})\cap \Stab\beta'}(Z_{\beta', R_{D}}^{ss})&= \frac{(1+t^2)^2}{1-t^2}(t^8+t^{10}+t^{12}+t^{14})\\
	&\equiv t^8 \ \mod t^{10}.
	\end{align*}
\end{prop}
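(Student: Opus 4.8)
The plan is to follow the pattern of the triple conic (Lemma~\ref{lem:weights3C} and Proposition~\ref{prop:extra3C}): first identify the representation $\rho\colon R_D\to\Aut(\calN_x^{R_D})$ on the normal slice at a generic \textit{D-curve} $x$, then read off the indexing set $\calB(\rho)$, the codimensions $d(\PP\calN^{R_D},\beta')$, the projective slices $Z_{\beta',\rho}$, the Weyl multiplicities $w(\beta',R_D,G)$ and the series $P_t^{N(R_D)\cap\Stab\beta'}(Z_{\beta',R_D}^{ss})$, and feed all of this into the \textit{Extra term} of Theorem~\ref{thm:cohkirblow}; recall $\rk\calN^{R_D}=8$ from Proposition~\ref{prop:mainD}. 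Write $N=N(R_D)=S\rtimes\mm$.

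I would first prove a weight lemma. A generic point of $Z_{R_D,1}^{ss}$ lies off the exceptional divisor of the first blow-up, so its normal slice is isomorphic as an $R_D$-representation to the normal slice to $G^0\cdot Z_{R_D}^{ss}$ in $X^{ss}$ at a generic \textit{D-curve} $F=ax_0^3y_1^3+bx_0^2x_1y_0y_1^2+cx_0x_1^2y_0^2y_1+dx_1^3y_0^3$. Since $R_D=T_1$ acts diagonally on the monomial basis, $\CC^{16}$ splits into $T_1$-eigenspaces with weight multiplicities $(\pm6)\times1$, $(\pm4)\times2$, $(\pm2)\times3$, $(0)\times4$. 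The tangent space to $G^0\cdot F$ is spanned by the entries of the matrices~(\ref{normal1D}),~(\ref{normal2D}) modulo the relations~(\ref{eqD}); together with the $4$-dimensional weight-$0$ tangent space to $Z_{R_D}^{ss}$ it contributes, beyond that $\CC^4$, only the off-diagonal generators $(DF)^1_{12},(DF)^2_{12}$ of weight $-2$ and $(DF)^1_{21},(DF)^2_{21}$ of weight $+2$, and these are independent for generic $F$. Subtracting $(0)\times4$ and $(\pm2)\times2$ from the weights on $\CC^{16}$ leaves $\dim\calN_x^{R_D}=8$ with weights $(\pm6)\times1$, $(\pm4)\times2$, $(\pm2)\times1$.

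From this I would take $\calB(\rho)=\{0,2,4,6\}$. As $R_D\cong\CC^*$ is abelian, $P_{\beta'}=R_D$ and~(\ref{codim}) gives $d(\PP\calN^{R_D},\beta')=n(\beta')$, the number of weights strictly below $\beta'$, so $2d(2)=8$, $2d(4)=10$, $2d(6)=14$. The slice $Z_{\beta',\rho}$ is the projectivised $\beta'$-eigenspace, hence $Z_{2,\rho}=Z_{6,\rho}\cong\PP^0$ and $Z_{4,\rho}\cong\PP^1$; since $R_D$ acts on each eigenspace through a single character it acts trivially projectively, so $Z_{\beta',\rho}^{ss}=Z_{\beta',\rho}$ and Remark~\ref{rmk:cohextra} degenerates the fibration $Z_{\beta',R_D}^{ss}\to Z_{R_D,1}^{ss}$: $P_t^{N\cap\Stab\beta'}(Z_{\beta',R_D}^{ss})=P_t^{N\cap\Stab\beta'}(Z_{R_D,1}^{ss})\cdot P_t(Z_{\beta',\rho})$. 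One checks $w(\beta',R_D,G)=1$, since the $D_8$-orbit of $\beta$ meets the line $\mathrm{Lie}(R_D)$, hence $\calB(\rho)$, in a single point. The only genuinely new input is the group $N\cap\Stab\beta'$ and its equivariant cohomology of $Z_{R_D,1}^{ss}$: the non-identity component of $S$ sends $\beta'$ to $-\beta'$ while the $\mm$-factor fixes it, so $N\cap\Stab\beta'=T\rtimes\mm$, a \emph{proper} subgroup of $N$. Rerunning the equivariantly perfect stratification of $(\PP^3)^{ss}$ as in Proposition~\ref{prop:mainD}, but now with $\Stab_{T\rtimes\mm}\beta=T$ for $\beta\in\{(1,-1),(3,-3)\}$, yields $P_t^{T\rtimes\mm}((\PP^3)^{ss})=\tfrac{1+t^4}{(1-t^2)^2}-\tfrac{t^4+t^6}{(1-t^2)^2}=\tfrac{1+t^2+t^4}{1-t^2}$; and since the stabiliser of $3C$ in $T\rtimes\mm$ is a copy of $\hat{T_1}\rtimes\mm$ with $\hat{T_1}$ a double cover of $R_D$, $P_t^{T\rtimes\mm}(C^{ss})=(1-t^2)^{-1}$. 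Hence $P_t^{T\rtimes\mm}(Z_{R_D,1}^{ss})=P_t^{T\rtimes\mm}((\PP^3)^{ss})+t^2P_t^{T\rtimes\mm}(C^{ss})=\tfrac{(1+t^2)^2}{1-t^2}$, and summing over $\beta'\in\{2,4,6\}$ gives $\tfrac{(1+t^2)^2}{1-t^2}\bigl(t^8+t^{10}(1+t^2)+t^{14}\bigr)=\tfrac{(1+t^2)^2}{1-t^2}(t^8+t^{10}+t^{12}+t^{14})$.

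The main obstacle is not the weight bookkeeping (though one must check that only the off-diagonal $DF$-generators add new directions), but the subtlety that the group relevant to the extra term is the \emph{proper} subgroup $N\cap\Stab\beta'=T\rtimes\mm$, not $N$ itself: its equivariant cohomology of $Z_{R_D,1}^{ss}$ is the \emph{larger} $\tfrac{(1+t^2)^2}{1-t^2}$ rather than the $\tfrac{1+t^2}{1-t^2}$ of Proposition~\ref{prop:mainD}, and one must correctly track the smaller stabilisers ($T$, and $\hat{T_1}\rtimes\mm$) occurring in the auxiliary stratification of $(\PP^3)^{ss}$ and in the orbit structure of $C^{ss}$. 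The term $t^{12}$ in the answer is precisely the class of $H^2$ of the line $Z_{4,\rho}\cong\PP^1$.
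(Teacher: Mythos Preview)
Your approach matches the paper's, and your explicit derivation of $P_t^{T\rtimes\mm}(Z_{R_D,1}^{ss})=\frac{(1+t^2)^2}{1-t^2}$ fleshes out a step the paper only sketches by analogy with Proposition~\ref{prop:mainD}. There is, however, a slip in your treatment of $\calB(\rho)$ and $w(\beta',R_D,G)$ that you should fix, even though the final answer survives.

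Because $R_D\cong\CC^*$ has \emph{trivial} Weyl group, its positive Weyl chamber is all of $\mathrm{Lie}(R_D)$, so the HKKN indexing set for the $R_D$-action on $\PP\calN_x^{R_D}$ is $\calB(\rho)=\{0,\pm2,\pm4,\pm6\}$, not $\{0,2,4,6\}$; the paper uses this convention both here and for $R_A$ (see the proof of Proposition~\ref{prop:extra2A5}). The restriction to non-negative values is legitimate only for $R_C\cong\SL(2,\CC)$, whose Weyl group $\mm$ genuinely halves the line. Moreover, your claim that the $D_8$-orbit of a nonzero $\beta'$ on the line $\mathrm{Lie}(R_D)$ meets that line in a single point is false: the $180^\circ$ rotation lies in $D_8$, so the orbit contains both $\pm\beta'$ and hence $w(\beta',R_D,G)=2$. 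Your two slips cancel --- summing over the six nonzero $\beta'\in\{\pm2,\pm4,\pm6\}$ with $w=2$ equals, by the evident $\beta'\leftrightarrow-\beta'$ symmetry of the codimensions, the slices $Z_{\beta',\rho}$ and the stabilisers, your sum over $\{2,4,6\}$ with $w=1$ --- so the formula $\frac{(1+t^2)^2}{1-t^2}(t^8+t^{10}+t^{12}+t^{14})$ stands, but the justification for $w$ should be corrected.
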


This lemma describes the weights of the representation $ \rho : R_{D} \rightarrow \Aut(\calN_x^{R_{D}}) $. Here $ x\in Z_{R_D}^{ss} $ is a general point: for our purposes it is enough to take it away from the locus of triple conics, but to fix an explicit point we consider $ x=V(F':= x_0^3y_1^3+x_1^3y_0^3) $.

\begin{lemma}\label{lem:weightsD}
	For $ R_{D}\cong \CC^* $, $ \dim \calN_x^{R_{C}}=8 $, the weights of the representation $ \rho $ of $ R_{D} $ on $ \calN_x^{R_D} $ are
	\[  (\pm 6)\times 1, (\pm 4)\times 2, (\pm2)\times 1. \]
\end{lemma}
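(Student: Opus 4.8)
The plan is to imitate, step for step, the computation carried out for the triple conic in Lemma \ref{lem:weights3C}. Since $R_D=T_1\cong\CC^*$ is itself a torus acting diagonally on the homogeneous coordinates $((x_0:x_1),(y_0:y_1))$, every monomial $x_0^ix_1^{3-i}y_0^jy_1^{3-j}$ is an $R_D$-eigenvector, of weight $2(i+j)-6$. First I would record that this makes $\CC^{16}=H^0(\PP^1\times\PP^1,\calO_{\PP^1\times\PP^1}(3,3))$ a direct sum of one-dimensional $R_D$-representations with weight multiplicities $(\pm6)\times1,\ (\pm4)\times2,\ (\pm2)\times3,\ (0)\times4$ — the same list as in the triple conic case, as it must be, since the acting torus is the same.

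Next, via the Euler sequence (as in the paragraph preceding the strictly polystable computations) I would identify $\calN_x^{R_D}$ with the quotient of $\CC^{16}$ by the tangent space, at $F':=x_0^3y_1^3+x_1^3y_0^3$, to the affine cone over $G^0\cdot Z_{R_D}^{ss}$. That tangent space is the sum of two contributions: the tangent space to the $\GL(2,\CC)\times\GL(2,\CC)$-orbit of $F'$, spanned by the entries of the matrix $DF'$, and the tangent space to the cone over $Z_{R_D}^{ss}$ itself. By Proposition \ref{prop:blow-up} the latter cone is precisely the four-dimensional weight-$0$ subspace $\langle x_0^3y_1^3,\ x_0^2x_1y_0y_1^2,\ x_0x_1^2y_0^2y_1,\ x_1^3y_0^3\rangle$, so it contributes exactly those four weight-$0$ coordinates and nothing else.

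I would then compute the entries of $DF'$ by substituting $(a:b:c:d)=(1:0:0:1)$ into the matrices \eqref{normal1D} and \eqref{normal2D}. Each entry is a scalar multiple of a single monomial, hence again an $R_D$-eigenvector, and reading off the exponents one finds the eigenvalues $(0)\times4,\ (2)\times2,\ (-2)\times2$; moreover the two relations \eqref{eqD} hold among the weight-$0$ entries, so those span only the two-dimensional space $\langle x_0^3y_1^3,\ x_1^3y_0^3\rangle$, which is already contained in the cone over $Z_{R_D}^{ss}$. Consequently the tangent space to the cone over $G^0\cdot Z_{R_D}^{ss}$ at $F'$ has $R_D$-weights $(0)\times4,\ (\pm2)\times2$; its dimension is $8$, consistent with $\rk\calN^{R_D}=\dim X-\dim G\cdot Z_{R_D}^{ss}=15-7=8$ from \eqref{formula:rank}. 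Subtracting these weights from the weights of $\CC^{16}$ leaves $(\pm6)\times1,\ (\pm4)\times2,\ (\pm2)\times1$, which is the assertion.

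The only delicate point is the bookkeeping at the end: one has to be sure that the weight-$0$ directions produced by $DF'$ are \emph{not} counted on top of the weight-$0$ directions of the cone over $Z_{R_D}^{ss}$, i.e. that they already lie inside $\langle x_0^3y_1^3,\dots,x_1^3y_0^3\rangle$, so that the weight-$0$ part of the orbit tangent space is exactly four-dimensional rather than larger and the dimension count $8+8=16$ closes up. Once the explicit entries of $DF'$ are written out this is immediate from \eqref{eqD}, so I do not expect any genuine obstacle here.
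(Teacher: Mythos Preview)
Your argument is correct and follows essentially the same route as the paper: decompose $\CC^{16}$ into $R_D$-weight spaces, compute the weights on the tangent space to $G\cdot Z_{R_D}^{ss}$ at $F'$ via the entries of $DF'$ together with the tangent directions along $Z_{R_D}^{ss}$, and subtract. The only cosmetic difference is bookkeeping: the paper first isolates the six-dimensional orbit tangent $(\pm2)\times2,\ (0)\times2$ and then adjoins the two missing weight-$0$ directions $x_0^2x_1y_0y_1^2$ and $x_0x_1^2y_0^2y_1$ by varying $A,B$ in the family $F_{A,B}$, whereas you take the whole four-dimensional weight-$0$ cone over $Z_{R_D}^{ss}$ at once and observe that the weight-$0$ part of $DF'$ already sits inside it; both yield the same eight-dimensional tangent space and hence the same normal weights.
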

\begin{proof}
	The vector space $ H^0(\calO_{\PP^1 \times \PP^1}(3, 3))=\CC^{16} $ decomposes as a sum of one-dimensional representations of $ R_{D} $ with the same multiplicities of weights as in the previous case:
	\[ (\pm 6)\times 1, (\pm4)\times 2, (\pm2)\times 3, (0)\times 4. \]
	The tangent space to the orbit $ \GL(2, \CC)\times \GL(2, \CC)\cdot F' $ is generated by the entries of the matrices (\ref{normal1D}) and (\ref{normal2D}), with $ a,d=1 $ and $ b, c=0 $. Each polynomial spans an eigenspace for the action of $ R_D $ with weights equal to
	\[ (\pm 2) \times 2, (0)\times 4. \]
	Now the relations (\ref{eqD}) are among the weight 0 generators, thus we may drop two of them in forming a basis of the tangent space. In total, the weights for $ R_D $ on the tangent space to the orbit $ \GL(2, \CC)\times \GL(2, \CC)\cdot F' $ are given by
	\[ (\pm 2) \times 2, (0)\times 2. \]
	However, we are interested in the normal space $ \calN_x^{R_{D}} $ to the orbit $ G\cdot Z_{R_D}^{ss} $. We know that $ Z_{R_D}^{ss}/\!\!/N $ is two-dimensional, thus the tangent space $ T_x(G\cdot Z_{R_D}^{ss}) $, when lifted to $ \CC^{16} $, is the sum of $ T_{F'}(\GL(2, \CC)\times \GL(2, \CC)\cdot F') $ together with two tangent vectors representing the direction along $ Z_{R_D}^{ss}/\!\!/N $. This two further vectors can be thought as coming from varying $ A $ and $ B $ around $ 0 $ in the equation (cf. \cite[p. 5658]{Fed12}):
	\[ F_{A,B}=x_0^3y_1^3+Ax_0^2x_1y_0y_1^2+Bx_0x_1^2y_0^2y_1+x_1^3y_0^3. \]
	The derivatives in these directions are $ \frac{d}{dA}F_{A, B}=x_0^2x_1y_0y_1^2 $ and $ \frac{d}{dB}F_{A, B}=x_0x_1^2y_0^2y_1 $, which, as expected, are of weight 0 and do not lie in the span of the weight-0 space of the orbit. Thus the lift to $ \CC^{16} $ of the tangent space to the orbit $ G\cdot Z_{R_D}^{ss} $ is given by a space with weights
	\[ (\pm 2) \times 2, (0)\times 4. \]
	By subtracting the weights of the representation of the tangent space to the orbit from the weights of the representation of $ R_D $ on $ \CC^{16} $, we obtain the weights of the action on the normal space.
\end{proof}
\begin{proof}[Proof of Proposition \ref{prop:extraD} ]
	From the description of the weights of $ \rho $ in the Lemma \ref{lem:weightsD}, we see that we can take $ \calB(\rho)=\{ \pm 6, \pm 4, \pm 2, 0 \} $. We can compute the codimension of the strata $ Z_{\beta', R_{D}}^{ss} $ via the formula (\ref{codim}).
	\[ d(\PP\calN_x^{R_{D}}, \beta')=n(\beta')-\dim(R_{D}/P_{\beta'}), \]
	where $ n(\beta') $ is the number of weights $ \alpha $ such that $ \alpha \cdot \beta'<||\beta'||^2 $ and $ P_{\beta'} $ is the associated parabolic subgroup. Due to the symmetry, the coefficient for every weight is $ w(\beta', R_D, G)=2 $ and, according to Remark \ref{rmk:cohextra}
	\[ P_t^{N(R_{D})\cap \Stab\beta'}(Z_{\beta', R_{D}}^{ss})=P_t^{N(R_{D})\cap \Stab\beta'}(Z_{R_D,1}^{ss})P_t(Z_{\beta', \rho}). \]
	because $ Z_{\beta', \rho}=Z_{\beta', \rho}^{ss} $ is either $ \PP^0 $ or $ \PP^1 $. One can easily compute the stabiliser $ \Stab\beta'=T\rtimes \mm \subset N(R_D)$, where the semidirect product is induced from $ G $. Arguing analogously to the main term of $ R_D $ (see Proposition \ref{prop:mainD}), one finds that:
	\[ P_t^{T\rtimes \mm}(Z_{R_D,1}^{ss})=\frac{(1+t^2)^2}{1-t^2}, \]
	completing the proof.
\end{proof}
\subsubsection{A-curves} We compute the extra contribution coming from the blow up of the \textit{A-curves}.
\begin{prop}\label{prop:extra2A5}
	For the group $ R_{A}\cong \CC^* $, the extra term of $ A_{R_A}(t) $ is given by
	\begin{align*}
	\sum_{0\neq \beta'\in\calB(\rho)}\frac{1}{w(\beta', R_{A}, G)}t^{2d(\PP\calN^{R_{A}}, \beta')}P_t^{N(R_{A})\cap \Stab\beta'}(Z_{\beta', R_{A}}^{ss})&=\frac{t^{10}+t^{12}+t^{14}+t^{16}+t^{18}}{1-t^2}\\
	&\equiv 0 \ \mod t^{10}.
	\end{align*}
	
\end{prop}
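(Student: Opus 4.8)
The plan is to follow the scheme of Proposition~\ref{prop:extraD} for the \textit{D-curves}, whose only genuinely new input is the representation $\rho\colon R_{A}\to\Aut(\calN^{R_{A}}_{x})$ of $R_{A}\cong\CC^{*}$ on the normal slice to the orbit $G\cdot Z_{R_{A}}^{ss}$ at a general point, which we take to be $x=C_{2A_5}$. Parallel to Lemma~\ref{lem:weightsD}, I would first prove that $\dim\calN^{R_{A}}_{x}=10$ and that the weights of $\rho$ on $\calN^{R_{A}}_{x}$ are
\[ (\pm 4)\times 1,\quad (\pm 6)\times 1,\quad (\pm 8)\times 1,\quad (\pm 10)\times 1,\quad (\pm 12)\times 1. \]

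For this I would argue exactly as in Lemma~\ref{lem:weightsD}. With respect to $R_{A}=\{(\diag(t,t^{-1}),\diag(t^{3},t^{-3}),1)\}$ the space $H^{0}(\PP^{1}\times\PP^{1},\calO_{\PP^{1}\times\PP^{1}}(3,3))\cong\CC^{16}$ decomposes into one-dimensional weight spaces, the monomial $x_{0}^{i}x_{1}^{3-i}y_{0}^{j}y_{1}^{3-j}$ carrying weight $12-2i-6j$; over $i,j\in\{0,\dots,3\}$ this gives the weights $(\pm 12)\times 1,(\pm 10)\times 1,(\pm 8)\times 1,(\pm 6)\times 2,(\pm 4)\times 1,(\pm 2)\times 1,(0)\times 2$. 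Expanding the entries of $DF_{C_{2A_5}}$ from (\ref{normal2A5}) one finds that they span $R_{A}$-eigenspaces of weights $(0)\times 4,(\pm 2)\times 1,(\pm 6)\times 1$, and the two relations (\ref{eq2A5}) are both among the four weight-$0$ entries, so a basis of the tangent space to the $\GL(2,\CC)\times\GL(2,\CC)$-orbit of $F_{C_{2A_5}}$ has weights $(0)\times 2,(\pm 2)\times 1,(\pm 6)\times 1$. In contrast with the \textit{D-curve} case there are no further directions to append: $N(R_{A})^{0}=T$ acts transitively on $Z_{R_{A}}^{ss}$, hence $G\cdot Z_{R_{A}}^{ss}=G\cdot C_{2A_5}$, and this six-dimensional space is already the lift to $\CC^{16}$ of the full tangent space to the orbit. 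Subtracting its weights from those of $\CC^{16}$ leaves the claimed weights, and in particular $\dim\calN^{R_{A}}_{x}=16-6=10$, as predicted by (\ref{formula:rank}).

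The proposition then follows by the bookkeeping of Proposition~\ref{prop:extraD}. Since $R_{A}$ is a torus, $\calB(\rho)$ consists of $0$ together with the weights of $\rho$, namely $\{0,\pm 4,\pm 6,\pm 8,\pm 10,\pm 12\}$; for each nonzero $\beta'$ the slice $Z_{\beta',\rho}^{ss}=Z_{\beta',\rho}=\PP^{0}$ (every weight of $\rho$ has multiplicity one) and $P_{\beta'}=R_{A}$, so by (\ref{codim}) the stratum of $\PP\calN^{R_{A}}$ indexed by $\beta'$ has real codimension $2d(\PP\calN^{R_{A}},\beta')=2n(\beta')$, with $n(\beta')$ the number of weights $\alpha$ with $\alpha\cdot\beta'<\|\beta'\|^{2}$; reading these from the weight list gives $2n(\pm 4)=10$, $2n(\pm 6)=12$, $2n(\pm 8)=14$, $2n(\pm 10)=16$, $2n(\pm 12)=18$. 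The weight set being symmetric and the non-identity component of $N(R_{A})$ acting on $R_{A}$ by inversion (so fixing no nonzero $\beta'\in\mathrm{Lie}(R_{A})$), one gets $w(\beta',R_{A},G)=2$ for each nonzero $\beta'$, so the $+\beta'$ and $-\beta'$ contributions add up into one term, and moreover $N(R_{A})\cap\Stab \beta'=T$. As $Z_{\beta',\rho}^{ss}=Z_{\beta',\rho}$, Remark~\ref{rmk:cohextra} yields
\[ P_{t}^{N(R_{A})\cap\Stab \beta'}(Z_{\beta',R_{A}}^{ss})=P_{t}^{T}(Z_{R_{A}}^{ss})\cdot P_{t}(\PP^{0})=\frac{1}{1-t^{2}}, \]
where $P_{t}^{T}(Z_{R_{A}}^{ss})=P_{t}(BR_{A})=\frac{1}{1-t^{2}}$ because $T$ acts transitively on $Z_{R_{A}}^{ss}$ with connected stabiliser $R_{A}$. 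Summing $\frac{1}{2}\bigl(t^{2n(\beta')}+t^{2n(-\beta')}\bigr)\frac{1}{1-t^{2}}$ over $\beta'\in\{4,6,8,10,12\}$ then produces $\dfrac{t^{10}+t^{12}+t^{14}+t^{16}+t^{18}}{1-t^{2}}$.

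The delicate point is the weight lemma: one must check that the relations (\ref{eq2A5}) lie entirely in the weight-$0$ part of the orbit tangent space — so that exactly two weight-$0$ directions are removed — and that $G\cdot Z_{R_{A}}^{ss}$ is the single orbit $G\cdot C_{2A_5}$, so that (unlike for the \textit{D-curves}, where $Z_{R_{D}}^{ss}/\!\!/N$ is a surface) no extra weight-$0$ normal direction is introduced. A slip in the multiplicity of the weight $0$ in $\calN^{R_{A}}_{x}$ would shift the stratum codimensions and spuriously create a $t^{8}$ term; the symmetry of the final weight set $(\pm 4,\pm 6,\pm 8,\pm 10,\pm 12)$ is the built-in sanity check.
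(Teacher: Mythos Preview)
Your proof is correct and follows essentially the same approach as the paper's own proof (which packages the weight computation into Lemma~\ref{lem:weights2A5} and then proceeds just as you do). One small imprecision: the stabiliser of a point of $Z_{R_A}^{ss}$ in $T$ is not literally $R_A$ but $R_A\cup R_A\cdot(I,-I)$; since the extra component is central it acts trivially on $H^*(BR_A)$, so your conclusion $P_t^{T}(Z_{R_A}^{ss})=(1-t^2)^{-1}$ stands.
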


 The proof of the proposition will consists of showing that the codimension $ d(\PP\calN^{R_A}, \beta') $ of any stratum $ S_{\beta'}(\rho) $ for $ 0\neq \beta'\in \calB(\rho) $ is at least 5. Firstly this lemma describes the weights of the representation $ \rho : R_{A} \rightarrow \Aut(\calN_x^{R_{A}}) $.

\begin{lemma}\label{lem:weights2A5}
	For $ R_{A}\cong \CC^* $, $ \dim \calN_x^{R_{C}}=10 $, the weights of the representation $ \rho $ of $ R_{A} $ on $ \calN_x^{R_A} $ are
	\[ (\pm 12)\times 1, (\pm 10)\times 1, (\pm 8)\times 1, (\pm 6)\times 1, (\pm 4)\times 1. \]
\end{lemma}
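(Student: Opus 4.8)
The plan is to follow the template of Lemmas~\ref{lem:weights3C} and~\ref{lem:weightsD}: compute the weights of $R_A$ on $\CC^{16}=H^0(\PP^1\times\PP^1,\calO_{\PP^1\times\PP^1}(3,3))$, compute the weights of the tangent space to the orbit $G\cdot Z_{R_A}^{ss}$ at $x=C_{2A_5}$ (lifted to $\CC^{16}$ by passing to affine cones, as in the paragraph preceding \eqref{normal1D}), and subtract. Since $R_A=\{(\diag(t,t^{-1}),\diag(t^3,t^{-3}),1)\}$ acts diagonally on the homogeneous coordinates, each monomial $x_0^ix_1^{3-i}y_0^jy_1^{3-j}$ is an eigenvector; specialising the weight correspondence \eqref{eq:weight} to the one-parameter subgroup $(a,b)=(t,t^3)$ assigns it the weight $(3-2i)+3(3-2j)=12-2i-6j$, and letting $0\le i,j\le3$ one obtains the multiset
\[
(\pm12)\times1,\ (\pm10)\times1,\ (\pm8)\times1,\ (\pm6)\times2,\ (\pm4)\times1,\ (\pm2)\times1,\ (0)\times2 .
\]

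Next I would compute the orbit-tangent space. Its lift to $\CC^{16}$ is spanned by the eight entries of $DF_{C_{2A_5}}$ in \eqref{normal2A5}; each such entry is a sum of monomials of a single $R_A$-weight, hence an eigenvector, and reading off the weights (with the $(2,1)$-entry of the first block being $3x_0x_1^2y_0^2y_1$, of weight $-2$) one finds four generators of weight $0$ together with one generator each of weights $\pm2$ and $\pm6$. The two relations \eqref{eq2A5} hold among the four weight-$0$ generators only, so the weight-$0$ part of the orbit-tangent space has dimension exactly $2$; thus this $6$-dimensional space carries the weights $(0)\times2,\ (\pm2)\times1,\ (\pm6)\times1$. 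In contrast with the $D$-curve case, no further tangent directions need to be added: all semistable $A$-curves are projectively equivalent to $C_{2A_5}$, so $Z_{R_A}^{ss}$ is a single $N(R_A)$-orbit and $G\cdot Z_{R_A}^{ss}=G\cdot C_{2A_5}$ is a single $G$-orbit, whose tangent space at $x$ lifts to exactly the $6$-dimensional space just computed (the directions $x_0^3y_0y_1^2$ and $x_1^3y_0^2y_1$ spanning the cone over $Z_{R_A}^{ss}$ already occur among the diagonal entries of $DF_{C_{2A_5}}$). Subtracting the orbit weights from the $\CC^{16}$-weights then leaves precisely
\[
(\pm12)\times1,\ (\pm10)\times1,\ (\pm8)\times1,\ (\pm6)\times1,\ (\pm4)\times1 ,
\]
a $10$-dimensional representation, which is the claim; in particular $\dim\calN_x^{R_A}=10$.

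The only genuinely delicate step is the middle one: correctly tracking the $R_A$-weights of the entries of \eqref{normal2A5} and checking that the relations \eqref{eq2A5} collapse the weight-$0$ summand of the orbit-tangent space from dimension $4$ to dimension $2$. The observation that here $Z_{R_A}^{ss}$ is a single $G$-orbit --- unlike for the $D$-curves, where one must add two extra weight-$0$ directions coming from the two-dimensional family of $D$-curves --- is what makes the final subtraction come out symmetric and clean.
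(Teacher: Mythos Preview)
Your proof is correct and follows the same approach as the paper: compute the $R_A$-weights on $\CC^{16}$, compute the weights on the tangent space to the orbit $G\cdot C_{2A_5}$ via the entries of $DF_{C_{2A_5}}$ modulo the relations \eqref{eq2A5}, and subtract. You also make explicit the point (which the paper states more briefly) that no extra directions beyond the orbit need to be added since $G\cdot Z_{R_A}^{ss}=G\cdot C_{2A_5}$ is a single orbit. Note in passing that your parenthetical correction of the $(2,1)$-entry of the first block of \eqref{normal2A5} to $3x_0x_1^2y_0^2y_1$ (weight $-2$) is right---the displayed entry $3x_0x_1^2y_0y_1^2$ in the paper is a typo, as one sees by computing $x_0\partial_{x_1}F_{C_{2A_5}}$; with this correction the weight list $(\pm6)\times1,\ (\pm2)\times1,\ (0)\times4$ for the generators comes out as stated.
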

\begin{proof}
	Recall that $ x $ is a general point of $ Z_{R_{A}}^{ss} $, but since $ G\cdot Z_{R_{A}}^{ss}=G\cdot C_{2A_5}$ we can take $ x=C_{2A_5} $. Hence to describe $ \calN_x^{R_A} $, we must simply describe the normal space to the orbit $ G\cdot C_{2A_5} $ at $C_{2A_5} $.
	
	The vector space $H^0(\calO_{\PP^1\times \PP^1}(3,3))= \CC^{16} $ decomposes as a sum of one-dimensional representation of $ R_{A} $ with the following multiplicities of weights
	\[ (\pm 12)\times 1, (\pm 10)\times 1, (\pm 8)\times 1, (\pm 6)\times 2, (\pm 4)\times 1, (\pm 2) \times 1, (0)\times 2. \]
	The tangent space to the orbit $ G\cdot C_{2A_5} $ is generated by the entries of the matrix (\ref{eq2A5}). Each polynomial spans an eigenspace for the action of $ R_{A} $ with weight equal to
	\[ (\pm 6)\times 1, (\pm 2) \times 1, (0)\times 4. \]
	Now the relations (\ref{eq2A5}) are among the weight 0 generators, thus we may drop two of them in forming a basis of the tangent space. In total, the weights for $ R_{A} $ on the tangent space to the orbit are given by
	\[ (\pm 6)\times 1, (\pm 2) \times 1, (0)\times 2. \]
	By subtracting the weights of the representation of the tangent space to the orbit from the weights of the representation of $ R_A $ on $ \CC^{16} $, we obtain the weights of the action on the normal space.
\end{proof}
\begin{proof}[Proof of Proposition \ref{prop:extra2A5} ]
	From the description of the weights of $ \rho $ in the Lemma \ref{lem:weights2A5}, we see that we can take $ \calB(\rho)=\{ \pm 12, \pm 10, \pm 8, \pm 6, \pm 4, 0 \} $. We can calculate the codimension via (\ref{codim})
	\[ d(\PP\calN_x^{R_A}, \beta')=n(\beta')-\dim(R_{A}/P_{\beta'}), \]
	where $ n(\beta') $ is the number of weights $ \alpha $ such that $ \alpha \cdot \beta'<||\beta'||^2 $ and $ P_{\beta'} $ is the associated parabolic subgroup, in this case equal to $ R_{A} $ since $ R_{A} $ is a torus. After noticing that for every non-zero weight, $ w(\beta', R_{A}, G)=2 $ and $N(R_{A})\cap \Stab\beta'=T$, the result follows.
\end{proof}
\subsection{Cohomology of $ \widetilde{M} $} We complete the proof of Theorem \ref{thm:cohoblow}.
\begin{proof}[Proof of Theorem \ref{thm:cohoblow}]
	From Theorem \ref{thm:cohkirblow}, we need to put all the previous results together to find the Betti numbers of the Kirwan partial desingularization $ \widetilde{M} $. For the sake of readability, we report only the polynomials modulo $t^{10}$, but one can double-check the result with the entire Hilbert-Poincar\'{e} series and observe that Poincar\'{e} duality effectively holds.
	\begin{align*}
	P_t(\widetilde{M})&=P_t^G(\widetilde{X}^{ss})\equiv   \\
	&1+t^2+2t^4+2t^6+4t^8 \tag{Semistable locus}\\
	&+t^2+t^4+2t^6+2t^8-0 \tag{Error term for \textit{triple conic}} \\
	&+t^2+3t^4+5t^6+7t^8-t^8 \tag{Error term for \textit{$ D $-curves}}\\
	&+t^2+t^4+2t^6+2t^8-0\tag{Error term for \textit{$ A$-curves}}\\
	&\equiv 1+4t^2+7t^4+11t^6+14t^8 \ \mod t^{10}.
	\end{align*}
\end{proof}

\section{Intersection cohomology of the moduli space $ M $}
\label{sec:intersection}

In this Section, we compute the intersection cohomology of $M$ descending from $\widetilde{M}$, and thus prove the following:

\begin{theorem}\label{thm:intM}
	The intersection Hilbert-Poincar\'{e} polynomial of $ M $ is
	$$ IP_t(M)=1+t^2+2t^4+2t^6+3t^8+3t^{10}+2t^{12}+2t^{14}+t^{16}+t^{18}.$$
\end{theorem}

In the first part of the Section, we recall Kirwan's procedure to compare the cohomology of $ \widetilde{X}/\!\!/G $ and the intersection cohomology of $ X/\!\!/G $, as explained in \cite{Kir86}. This is in turn an application of the \textit{Decomposition Theorem} by Be\u{\i}linson, Bernstein, Deligne and Gabber (cf. \cite{BBD82}).

In the second part of the Section, instead of applying the Decomposition Theorem directly to the blow-down map $ \widetilde{M}\rightarrow M $ at the level of GIT quotients, we follow Kirwan's results (see \cite{Kir86}) and study the variation of the intersection Betti numbers at the level of the parameter spaces $ X^{ss} $ and $ \widetilde{X}^{ss}$, under each stage of the resolution.
 
\subsection{General setting}
We start with the general setting, as in Section \S \ref{subsec:blow} and \S\ref{subsec:settingblowup}, of a projective manifold $ X $ acted on by a reductive group $ G $. We suppose to have performed all the stages of the modification $ \widetilde{X}^{ss}\rightarrow X^{ss} $, indexed by the set $ \calR $, so that the \textit{Kirwan blow-up} $ \widetilde{X}/\!\!/G\rightarrow X/\!\!/G$ is obtained by blowing-up successively the (proper transforms of the) subvarieties $ Z_R^{ss}/\!\!/N(R) $.  Since the partial desingularization $ \widetilde{X}/\!\!/G $ has only finite quotient singularities, its intersection cohomology $ IH^*(\widetilde{X}/\!\!/G) $ with rational coefficients is isomorphic to the corresponding rational cohomology $ H^*(\widetilde{X}/\!\!/G) $, and so by the above results we know the Betti numbers of its intersection cohomology. Eventually, we will be able to find the intersection Betti numbers of $ X/\!\!/G $, by means of the following:

\begin{theorem}\cite[3.1]{Kir86}\label{thm:blowdown}
	In the above setting, the intersection Hilbert-Poincar\'{e} polynomial of the GIT quotient $ X/\!\!/G $ is related to that of the Kirwan blow-up via the equality
	\[ IP_t(X /\!\!/ G)=P_t(\widetilde{X}/\!\!/G)-\sum_{R\in\calR} B_R(t),\]
	where the error term is given by:
	\[ B_R(t)=\sum_{p+q=i}t^i \dim[H^p(\hat{Z}_{R}/\!\!/N^0(R))\otimes IH^{\hat{q}_R}(\PP \calN_x^R /\!\!/R)]^{\pi_0 N(R)}, \]
	where the integer $ \hat{q}_R=q-2 $ for $ q\leq \dim \PP\calN_x^R/\!\!/R $ and $ \hat{q}_R=q $ otherwise. The subvariety $ \hat{Z}_{R} $ is the strict transform of $ Z_R^{ss} $ in the appropriate stage of the resolution, while $ N(R)\subset G $ denotes the normaliser of $ R $. The GIT quotient $ \PP \calN_x^R /\!\!/R $ is constructed from the induced action of $ R $ on the normal slice $ \calN_x^R $ to the orbit $ G\cdot Z_R^{ss} $ in $ X^{ss} $ at a general point $ x\in Z_R^{ss} $.
\end{theorem}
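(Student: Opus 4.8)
The plan is to prove the identity by analysing the blow-down map one stage at a time, applying the Decomposition Theorem at each stage. Write the partial desingularisation as a composition of proper birational morphisms of GIT quotients
\[ \widetilde{X}/\!\!/G = M_{\ell}\to M_{\ell-1}\to\cdots\to M_1\to M_0 = X/\!\!/G, \]
where $M_j\to M_{j-1}$ is the blow-up of $M_{j-1}$ along the (strict transform of the) image of a single centre $Z_{R_j}^{ss}$, the subgroups $R_j\in\calR$ being taken in order of decreasing dimension. Since $M_\ell=\widetilde{X}/\!\!/G$ has only finite quotient singularities, $IP_t(M_\ell)=P_t(M_\ell)$, so the theorem follows by induction and telescoping once we prove, for each stage, $IP_t(M_{j-1})=IP_t(M_j)-B_{R_j}(t)$. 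It is therefore enough to treat a single blow-down $\pi\colon\widehat{M}\to M'$ whose centre is the subvariety $Y:=\hat Z_R^{ss}/\!\!/N(R)$ and which restricts to an isomorphism over $M'\smallsetminus Y$.

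First I would describe the local structure of $\pi$ near $Y$ by means of Luna's \'etale slice theorem. Near a point $x\in Z_R^{ss}$, whose stabiliser is $R$ up to finite index, the quotient $M'$ is \'etale-locally a product of a smooth transverse slice to $Y$ with the affine GIT quotient $\calN_x^R/\!\!/R$ of the normal slice to the orbit $G\cdot Z_R^{ss}$; this affine quotient is a cone over $\PP\calN_x^R/\!\!/R$. Blowing up upstairs and passing to the induced quotient replaces this local model by the product of the same transverse slice with the Kirwan-type modification of that cone, whose exceptional fibre over the cone point is $\PP\calN_x^R/\!\!/R$; correspondingly the exceptional divisor $E\subset\widehat{M}$ fibres over $Y\cong\hat Z_R^{ss}/\!\!/N(R)$ with fibre $\PP\calN_x^R/\!\!/R$, and $\pi|_E$ is this fibration. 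The entire picture is equivariant for the residual finite group $\pi_0 N(R)=N(R)/N^0(R)$, which acts on the slice, on $\calN_x^R$ and on $\PP\calN_x^R/\!\!/R$; this is the source of the $\pi_0 N(R)$-invariants in the formula for $B_R(t)$.

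The core of the argument is then the Decomposition Theorem applied to $R\pi_*IC_{\widehat{M}}$. Since $\pi$ is birational and an isomorphism over a dense open set, its only perverse summand supported on all of $M'$ is $IC_{M'}$, and the remaining summands are supported on $Y$ (and, should $Y$ itself be singular, on its strata). The local product description reduces the determination of these supported summands --- equivalently, of the surplus $IP_t(\widehat{M})-IP_t(M')=B_R(t)$ --- to the discrepancy along the cone point between the local intersection cohomology of $\calN_x^R/\!\!/R$ and the ordinary cohomology of its modification; by the cone formula for intersection cohomology this discrepancy is carried by the intersection cohomology of $\PP\calN_x^R/\!\!/R$ with its degrees re-indexed according to the rule $q\mapsto\hat q_R$ (namely $\hat q_R=q-2$ below the complex dimension of $\PP\calN_x^R/\!\!/R$ and $\hat q_R=q$ above it). Spreading this contribution over the base along the fibration $\pi|_E$ by a degenerating Leray-type spectral sequence (compare Remark \ref{rmk:cohextra}) introduces the tensor factor $H^p(\hat Z_R/\!\!/N^0(R))$, and taking $\pi_0 N(R)$-invariants produces the stated expression for $B_R(t)$. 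Summing over $R\in\calR$ and telescoping the induction gives the identity.

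The step I expect to be the main obstacle is pinning down the output of the Decomposition Theorem precisely enough: one must rule out unexpected simple summands beyond $IC_{M'}$ and those forced by the local cone model, which genuinely requires understanding the singularities of the centres $Z_R^{ss}/\!\!/N(R)$ (these need not be smooth, so their own intersection cohomology enters) and checking that the strict transforms formed at the later stages still admit the slice description at a general point, so that the normal slice $\calN_x^R$ computed inside the original $X^{ss}$ remains the relevant one throughout. A secondary and more delicate bookkeeping issue is keeping the $\pi_0 N(R)$-actions --- on the slice, on $\PP\calN_x^R/\!\!/R$, on the cone and on its modification --- mutually compatible, so that invariants are taken in the right place, and confirming that the cone formula for the local intersection cohomology of $\calN_x^R/\!\!/R$, combined with the cohomology of the modified cone, is precisely what produces the shift $\hat q_R$.
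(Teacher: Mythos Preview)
The paper does not give its own proof of this theorem: it is quoted verbatim from Kirwan \cite[3.1]{Kir86} and used as a black box in the computations that follow. There is therefore nothing in the paper to compare your proposal against.

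That said, your sketch is an accurate outline of Kirwan's original argument in \cite{Kir86}: the stage-by-stage reduction, the local model via Luna's slice theorem identifying a neighbourhood of the centre with a cone over $\PP\calN_x^R/\!\!/R$, the Decomposition Theorem applied to each blow-down, and the cone formula for intersection cohomology producing the shift $\hat q_R$. The difficulties you flag---controlling which summands appear in the decomposition and tracking the $\pi_0 N(R)$-equivariance---are precisely the content of Kirwan's paper, so your outline is faithful to the source even though it goes well beyond what the present paper does.
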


\begin{remark}
	If $ \hat{Z}_{R}/\!\!/N^0(R) $ is simply connected, which is always the case in our situation, then the action of $ \pi_0 N(R) $ on the tensor product splits \cite[\S 2]{Kir86}, thus the error term for the subgroup $ R $ is
	\[ B_R(t)=\sum_{p+q=i}t^i\dim H^p(\hat{Z}_{R}/\!\!/N(R))\cdot \dim IH^{\hat{q}_R}(\PP \calN_x^R/\!\!/R)^{\pi_0 N(R)}.\]
\end{remark}

\subsection{Cohomology of blow-downs for $ (3, 3) $ curves in $ \PP^1 \times \PP^1 $} We want to apply Theorem \ref{thm:blowdown} to compute the intersection Betti numbers of the moduli space of non-hyperelliptic Petri-general curves of genus 4.
Now we will follow backwards the steps of the blow-down operations of \textit{A-curves}, then \textit{D-curves}, and eventually triple conics.

\subsubsection{A-curves} In the first step we need to blow-down the locus of \textit{A-curves}.
\begin{prop}
	For the group $ R_{A}\cong \CC^* $, we have 
	\begin{enumerate}[(i)]
		\item $ Z_{R_{A}}/\!\!/N(R_{A}) $ is a point.
		\item $ IP_t(\PP \calN_x^{R_{A}}/\!\!/R_{A})=1+2t^2+3t^4+4t^6+5t^8+4t^{10}+3t^{12}+2t^{14}+t^{16}. $
	\end{enumerate}
	The term $ B_{R_{A}}(t) $ is given by
	\begin{align*}
	B_{R_{A}}(t)&=t^2+t^4+2t^6+2t^8+2t^{10}+2t^{12}+t^{14}+t^{16}\\
	&\equiv A_{R_{A}}(t) \ \mod t^{10}.
	\end{align*}
	
\end{prop}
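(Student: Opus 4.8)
The plan is to apply Theorem~\ref{thm:blowdown} to the final blow-down, which at the level of moduli spaces is a (weighted) blow-up of the single point corresponding to $A$-curves. This requires three inputs: the base $\hat Z_{R_A}/\!\!/N^0(R_A)$, the fibre-quotient $\PP\calN_x^{R_A}/\!\!/R_A$ together with its intersection Poincar\'e polynomial, and the action of $\pi_0 N(R_A)=\mm$ on the latter. For (i): the $A$-curve locus is untouched by the first two modifications, so $\hat Z_{R_A}=Z_{R_A}^{ss}$, which by Proposition~\ref{prop:blow-up} is $\PP^1$ minus two points; since $N^0(R_A)=T$ acts on it transitively (as already used in computing the main term for $R_A$), the quotient $\hat Z_{R_A}/\!\!/N^0(R_A)$ is a point, and stays so after dividing by $\pi_0 N(R_A)$. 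Hence $H^p(\hat Z_{R_A}/\!\!/N(R_A))$ is $\QQ$ for $p=0$ and vanishes otherwise, so only the $p=0$ summand of the error term contributes.

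For (ii): by Lemma~\ref{lem:weights2A5} the slice is $\calN_x^{R_A}\cong\CC^{10}$ with $R_A\cong\CC^*$ acting with weights $\pm4,\pm6,\pm8,\pm10,\pm12$, each of multiplicity one, so $\PP\calN_x^{R_A}\cong\PP^9$. Since $0$ is not among these weights, every semistable point is stable and has finite stabiliser, hence $\PP\calN_x^{R_A}/\!\!/R_A$ is an $8$-dimensional $\QQ$-homology manifold and $IH^*=H^*$. I would compute $H^*$ by the HKKN stratification of Theorems~\ref{thm:strata}--\ref{thm:equi} for this $\CC^*$-action: the non-minimal strata are the $S_{\pm w}$ for $w\in\{4,6,8,10,12\}$, where $S_w$ is spanned by the coordinates of weight $>w$ --- hence of codimension $9-\#\{w'>w\}$, i.e.\ $5,6,7,8,9$ --- and $P_t^{\CC^*}(S_{\pm w})=(1-t^2)^{-1}$ by Remark~\ref{rmk:unstable}. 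Subtracting these from $P_t^{\CC^*}(\PP^9)=(1-t^{20})(1-t^2)^{-2}$ gives $1+2t^2+3t^4+4t^6+5t^8+4t^{10}+3t^{12}+2t^{14}+t^{16}$, which is palindromic (Poincar\'e duality) and is the asserted $IP_t(\PP\calN_x^{R_A}/\!\!/R_A)$.

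For (iii): a generator of $\mm=\pi_0 N(R_A)=\pi_0 S$ can be represented by $\sigma_0=\left(\begin{pmatrix}0&1\\-1&0\end{pmatrix},\begin{pmatrix}0&1\\-1&0\end{pmatrix},1\right)\in S$, which fixes $C_{2A_5}$, inverts $R_A$, and interchanges the weight-$w$ and weight-$(-w)$ coordinates on $\calN_x^{R_A}$ for each $w$. Thus on the stratification above it swaps $S_w\leftrightarrow S_{-w}$, while on $P_t^{\CC^*}(\PP^9)=H^*(\PP^9)\otimes H^*(B\CC^*)$ it fixes $H^*(\PP^9)$ and acts by $c\mapsto-c$ on $H^*(B\CC^*)=\QQ[c]$. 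Taking $\mm$-invariants of the equivariantly perfect stratification then yields $IP_t(\PP\calN_x^{R_A}/\!\!/R_A)^{\mm}=(1-t^{20})[(1-t^2)(1-t^4)]^{-1}-(1-t^2)^{-1}(t^{10}+t^{12}+t^{14}+t^{16}+t^{18})=1+t^2+2t^4+2t^6+3t^8+2t^{10}+2t^{12}+t^{14}+t^{16}$. Feeding (i), (ii) and this into Theorem~\ref{thm:blowdown}, and using that the base is a point so that only $p=0$ survives, the error term becomes $B_{R_A}(t)=\sum_q t^{q}\dim IH^{\hat q_{R_A}}(\PP\calN_x^{R_A}/\!\!/R_A)^{\mm}$ with $\hat q_{R_A}=q-2$ for $q\le 8$ and $\hat q_{R_A}=q$ for $q>8$; reading off the $\mm$-invariant Betti numbers under this rule (which never hits the middle degree $8$) produces $B_{R_A}(t)=t^2+t^4+2t^6+2t^8+2t^{10}+2t^{12}+t^{14}+t^{16}$, whose truncation modulo $t^{10}$ recovers $A_{R_A}(t)$ from Section~\ref{sec:cohomology}.

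The routine parts are (i), which is immediate, and (ii), a standard $\CC^*$-quotient computation once the normal weights are known. I expect the main obstacle to be (iii): pinning down precisely how $\sigma_0$ acts --- both on the weights of $\rho$ and on the cohomology of the stratified quotient --- and then handling the perversity shift $\hat q_{R_A}$ in Theorem~\ref{thm:blowdown} carefully enough that the middle-degree intersection class $IH^8$ (of $\mm$-invariant dimension $3$) is correctly \emph{excluded} from $B_{R_A}(t)$.
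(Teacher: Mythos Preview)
Your argument is correct and follows the same route as the paper: transitivity of $N^0(R_A)=T$ on $Z_{R_A}^{ss}$ for (i), the HKKN stratification of the $\CC^*$-action with weights $\pm4,\pm6,\pm8,\pm10,\pm12$ on $\PP^9$ for (ii), and the $\mm$-invariants via the involution $\sigma_0$ inverting $R_A$ and swapping opposite-weight strata for (iii). Your treatment of the perversity shift $\hat q_{R_A}$ and the exclusion of $IH^8$ is in fact more explicit than the paper's, which simply asserts that the final statement follows from the definition of $B_R(t)$.
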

\begin{proof} For brevity we write $ R=R_{A} $, $ N=N(R_{A}) $ and $\PP^9 \cong \PP \calN_x^{R_{A}} $. (1) follows from the fact that $ N $ acts transitively on $ Z_{R}^{ss} $.
	
	In Lemma \ref{lem:weights2A5} the weights of the representation $\rho: R\rightarrow \Aut(\calN_x^R) $ were computed. It follows that there are no strictly-semistable points in $ \PP^9 $, so that the GIT quotient $ \PP^9/\!\!/R $ is a projective toric variety of dimension 8 with at worst finite quotient singularities. Thus $ IP_t(\PP^9/\!\!/R)=P_t(\PP^9/\!\!/R)=P_t^{R}((\PP^9)^{ss}) $ and using the usual $ R $-equivariantly perfect stratification (see Theorem \ref{thm:strata} and \ref{thm:equi}) we obtain:
	\begin{align*}
	P_t^{R}((\PP^9)^{ss})&=P_t(\PP^9)P_t(BR)-\sum_{0\neq \beta'\in \calB(\rho)}t^{2d(\beta')}P_t^R(S_{\beta'})\\
	&=\frac{1+...+t^{18}}{1-t^2}-2\frac{t^{10}+...+t^{18}}{1-t^2}\\
	&=1+2t^2+3t^4+4t^6+5t^8+4t^{10}+3t^{12}+2t^{14}+t^{16}.
	\end{align*}
	Now we need to know the dimensions $ \dim IH^{\hat{q}}(\PP^9/\!\!/R)^{\pi_0 N} $, where the action is induced by an action of $ \pi_0 N $ on $ \PP^9/\!\!/R $. We have seen that $ \pi_0 N \cong \mm $ acts on $ \PP^9/\!\!/R $ via permutation of the coordinates $ ((x_0:x_1), (y_0:y_1))\leftrightarrow ((x_1:x_0), (y_1:y_0)) $. Thus the action on the cohomology of $ \PP^9 $ is trivial, while $ \mm $ acts on the torus $ \CC^* $ via $ \lambda \leftrightarrow \lambda^{-1} $, hence in cohomology $ H^*(B\CC^*)=\QQ[c]$ by $ c\leftrightarrow -c $, and on the strata interchanging the positive-indexed ones with the negative-indexed ones. Eventually
	\begin{align*}
	IP_t(\PP^9/\!\!/R)^{\pi_0 N}&= \frac{1+...+t^{18}}{1-t^4}-\frac{t^{10}+...+t^{18}}{1-t^2}\\
	&= 1+t^2+2t^4+2t^6+3t^8+2t^{10}+2t^{12}+t^{14}+t^{16}.
	\end{align*}
	Now the final statement easily follows from the definition of $ B_R(t) $.
\end{proof}

\subsubsection{D-curves} In the second step, we need to blow-down the locus of \textit{D-curves}.
\begin{prop}
	For the group $ R_{D}\cong \CC^* $ with the notation as in the proof of Proposition \ref{prop:mainD}, we have 
	\begin{enumerate}[(i)]
		\item $ Z_{R_{D},1}/\!\!/N(R_{D}) $ is a simply connected surface and $ P_t(Z_{R_{D},1}/\!\!/N(R_{D}))=1+2t^2+t^4 $.
		\item $ IP_t(\PP \calN_x^{R_D}/\!\!/R_D)=1+2t^2+3t^4+4t^6+3t^8+2t^{10}+t^{12}. $
	\end{enumerate}
	The term $ B_{R_{D}}(t) $ is equal to
	\[ B_{R_D}(t)=t^2+3t^4+5t^6+7t^8+7t^{10}+5t^{12}+3t^{14}+t^{16}. \]
\end{prop}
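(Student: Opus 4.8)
The plan is to prove the two auxiliary statements (i) and (ii) and then read off $B_{R_D}(t)$ from Theorem \ref{thm:blowdown}, reducing everything either to the equivariant computations already done for the main term in Proposition \ref{prop:mainD} or to a single GIT quotient of $\PP^7$ by a torus.

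For (i) I would start from the description obtained in the proof of Proposition \ref{prop:mainD}: $Z_{R_D,1}^{ss}=\Bl_{C^{ss}}(\PP^3)^{ss}$, where $\PP^3\cong Z_{R_D}$, the centre $C^{ss}$ is the twisted cubic cut out by the triple conics, and $R_D\cong\CC^*$ acts trivially on $Z_{R_D,1}^{ss}$ while $N(R_D)/R_D$ acts with finite stabilisers. Hence $Z_{R_D,1}/\!\!/N(R_D)=Z_{R_D,1}^{ss}/(N(R_D)/R_D)$ is a projective rational surface with at worst finite quotient singularities, and it is simply connected (as required by the Remark): $\Bl_{C^{ss}}(\PP^3)^{ss}$ is the blow-up along a smooth curve of the simply connected complement of two disjoint lines in $\PP^3$, and one checks the quotient introduces no fundamental group. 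Its rational cohomology is therefore concentrated in even degrees and obeys Poincar\'{e} duality, so $P_t(Z_{R_D,1}/\!\!/N(R_D))=1+b_2t^2+t^4$. To pin down $b_2$ I would combine the value $P_t^{N(R_D)}(Z_{R_D,1}^{ss})=\tfrac{1+t^2}{1-t^2}$ already computed in Proposition \ref{prop:mainD} with the splitting $H^*_{N(R_D)}(Z_{R_D,1}^{ss})=\big(H^*(Z_{R_D,1}^{ss}/\!\!/N^0(R_D))\otimes H^*(BR_D)\big)^{\pi_0 N(R_D)}$ (cf. \cite[1.17]{Kir86}) and the fact that $\pi_0 N(R_D)$ acts on $H^*(BR_D)=\QQ[c]$ through the Weyl involution $c\mapsto -c$, so that $H^*(BR_D)^{\pi_0 N(R_D)}=\QQ[c^2]$; comparing graded pieces, and using that $\pi_0 N(R_D)$ acts trivially on $H^0$ and on the top class $H^4$, forces $b_2=2$.

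For (ii), Lemma \ref{lem:weightsD} gives $\PP\calN_x^{R_D}\cong\PP^7$ with $R_D\cong\CC^*$ acting by the weights $\pm 6,(\pm 4)^{\times 2},\pm 2$. Since $0$ is not among these weights, every semistable point of $\PP^7$ is stable, so $\PP^7/\!\!/R_D$ is a six-dimensional projective toric variety with at worst finite quotient singularities and hence $IP_t(\PP^7/\!\!/R_D)=P_t(\PP^7/\!\!/R_D)=P_t^{R_D}((\PP^7)^{ss})$. I would then run the $R_D$-equivariantly perfect HKKN stratification of $\PP^7$ (Theorems \ref{thm:strata} and \ref{thm:equi}), whose indexing set is $\calB(\rho)=\{0,\pm 2,\pm 4,\pm 6\}$: the strata at $\pm 2$ and $\pm 6$ have $Z^{ss}\cong\PP^0$ and real codimensions $8$ and $14$, and the strata at $\pm 4$ have $Z^{ss}\cong\PP^1$ with $R_D$ acting trivially and real codimension $10$ (these are exactly the numbers $2d(\PP\calN^{R_D},\beta')$ used in the proof of Proposition \ref{prop:extraD}). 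Subtracting, $P_t^{R_D}((\PP^7)^{ss})=\tfrac{1+t^2+\cdots+t^{14}}{1-t^2}-\tfrac{2t^8+2t^{10}(1+t^2)+2t^{14}}{1-t^2}$, which simplifies to $(1+t^2+t^4+t^6)^2=1+2t^2+3t^4+4t^6+3t^8+2t^{10}+t^{12}$, the asserted polynomial.

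Finally, to obtain $B_{R_D}(t)$ I would substitute (i) and (ii) into Theorem \ref{thm:blowdown}. Because $Z_{R_D,1}/\!\!/N(R_D)$ is simply connected the Remark applies, so $B_{R_D}(t)$ is the product of $P_t(Z_{R_D,1}/\!\!/N(R_D))=1+2t^2+t^4$ with $\sum_q t^q\dim IH^{\hat q}(\PP^7/\!\!/R_D)^{\pi_0 N(R_D)}$, where $\hat q=q-2$ for $q\le 6$ and $\hat q=q$ otherwise. The one remaining input is the $\pi_0 N(R_D)\cong\mm\times\mm$-invariant part of $IH^*(\PP^7/\!\!/R_D)$; here $\pi_0 N(R_D)$ is generated by the ruling-swap $x_i\leftrightarrow y_i$ coming from the $\mm$-extension of $G$ and by the antidiagonal Weyl element, which inverts $R_D$ and therefore reverses the weights on $\calN_x^{R_D}$, interchanging the weight-$k$ and weight-$(-k)$ coordinate lines. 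Using the matrices (\ref{normal1D})--(\ref{normal2D}) and Lemma \ref{lem:weightsD} I would make this action explicit on the homogeneous coordinates of $\PP^7$, record how it permutes the $\pm\beta'$ strata and acts by $c\mapsto -c$ on each factor $H^*(B\CC^*)$, re-run the equivariantly perfect stratification while tracking the $\mm\times\mm$-representations, and take invariants; this should yield $IP_t(\PP^7/\!\!/R_D)^{\pi_0 N(R_D)}=1+t^2+2t^4+2t^6+2t^8+t^{10}+t^{12}$, whence $\sum_q t^q\dim IH^{\hat q}(\PP^7/\!\!/R_D)^{\pi_0 N(R_D)}=t^2+t^4+2t^6+2t^8+t^{10}+t^{12}$ and $B_{R_D}(t)=(1+2t^2+t^4)(t^2+t^4+2t^6+2t^8+t^{10}+t^{12})=t^2+3t^4+5t^6+7t^8+7t^{10}+5t^{12}+3t^{14}+t^{16}$. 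The main obstacle is exactly this last step: correctly tracking the $\pi_0 N(R_D)$-action through the HKKN stratification of $\PP^7$ and extracting the invariant intersection Betti numbers of the toric quotient; the rest follows from the equivariant cohomology already computed for the main terms.
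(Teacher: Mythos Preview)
Your proposal is correct and follows essentially the same route as the paper: for (i) you invoke \cite[1.17]{Kir86} and the $\pi_0 N(R_D)$-action on $H^*(BR_D)$ to extract $P_t(Z_{R_D,1}/\!\!/N)$ from the already-computed $P_t^{N}(Z_{R_D,1}^{ss})$, for (ii) you run the HKKN stratification on $\PP^7$ with the weights of Lemma~\ref{lem:weightsD}, and for $B_{R_D}(t)$ you track the $\pi_0 N(R_D)$-action through that stratification to obtain the invariant intersection Betti numbers before multiplying---exactly as the paper does. The only substantive difference is the justification of simple connectivity: the paper dispatches this by observing that $Z_{R_D,1}/\!\!/N$ is a rational surface with finite quotient singularities and citing \cite[Theorem~7.8]{Kol93}, which is cleaner than your direct topological sketch.
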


\begin{proof} For brevity we write $ R=R_{D} $, $ N=N(R_{D}) $ and $ \PP^7\cong \PP \calN_x^{R_D} $. The GIT quotient $  Z_{R,1}/\!\!/N\cong Z_{R,1}/\!\!/(N/R) $ is a rational surface with finite quotient singularities, hence simply connected by \cite[Theorem 7.8]{Kol93}. Its cohomology can be computed by means of the equality \cite[1.17]{Kir86}:
	\[ H_{N}^*(Z_{R,1}^{ss})=(H^*(Z_{R,1}/\!\!/N^0)\otimes H^*(BR))^{\pi_0 N}. \]
	The action of $ \pi_0 N $ splits on the tensor product, because also $ Z_{R,1}/\!\!/N^0 $ is simply connected, giving:
	\[ H^*_N(Z_{R_{D},1}^{ss})=H^*(Z_{R_{D},1}^{ss}/\!\!/N)\otimes H^*(BR)^{\pi_0 N}.\]
	Recall that $ \pi_0 N=N/T=\mm \rtimes \mm =\mm \times \mm $: the first factor acts on $ R\cong \CC^* $ by inversion, while the second one acts trivially. Therefore
	\[ H^*(BR)^{\pi_0 N}=\QQ[c]^{\mm \times \mm}=\QQ[c^2], \ \deg(c)=2. \]
	In the proof of Proposition \ref{prop:mainD}, we have already computed $ P_t^N(Z_{R_{D},1}^{ss}) $, thus
	\[ P_t(Z_{R_{D},1}^{ss}/\!\!/N)=\frac{1+t^2}{1-t^2}(1-t^4)=1+2t^2+t^4, \]
	completing the proof of (i). 
	
	In Lemma \ref{lem:weightsD} the weights of the representation $\rho: R\rightarrow \Aut(\calN_x^R) $ were carried out. It follows that there are no strictly-semistable points in $ \PP^7 $, so that the GIT quotient $ \PP^7/\!\!/R $ is a projective variety of dimension 6 with at worst finite quotient singularities. Thus $ IP_t(\PP^7/\!\!/R)=P_t(\PP^7/\!\!/R)=P_t^{R}((\PP^7)^{ss}) $ and using the usual $ R $-equivariantly perfect stratification (see Theorem \ref{thm:strata} and \ref{thm:equi}) we obtain: 
	\begin{align*}
	P_t^{R}((\PP^7)^{ss})&=P_t(\PP^7)P_t(BR)-\sum_{0\neq \beta'\in \calB(\rho)}t^{2d(\beta')}P_t^R(S_{\beta'})\\
	&=\frac{1+...+t^{14}}{1-t^2}-2\frac{t^8+t^{10}(1+t^2)+t^{14}}{1-t^2}\\
	&=1+2t^2+3t^4+4t^6+3t^8+2t^{10}+t^{12}.
	\end{align*}
	Now we need to know the dimensions $ \dim IH^{\hat{q}}(\PP^7/\!\!/R)^{\pi_0 N} $. We have seen that $ \pi_0 N\cong \mm \times \mm$ acts on $ \PP^7/\!\!/R $ as follows: the first $ \mm $ factor via permutation of the coordinates $ ((x_0:x_1), (y_0:y_1))\leftrightarrow ((x_1:x_0), (y_1:y_0)) $, while the second one by interchanging the rulings of $ \PP^1 \times \PP^1 $. Thus the action on the cohomology of $ \PP^7 $ is trivial, while the first factor of $ \mm \times \mm $ acts on the torus $ \CC^* $ via $ \lambda \leftrightarrow \lambda^{-1} $, hence in cohomology $ H^*(B\CC^*)=\QQ[c] $ by $ c\leftrightarrow -c $, and the second factor does trivially. Moreover $ \pi_0 N $ acts on the strata interchanging the positive-indexed ones with the negative-indexed ones:
	\begin{align*}
	IP_t(\PP^7/\!\!/R)^{^{\pi_0 N}} &=\frac{1+...+t^{14}}{1-t^4}-\frac{t^8+...+t^{14}}{1-t^2} \\
	&= 1+t^2+2t^4+2t^6+2t^8+t^{10}+t^{12}.
	\end{align*}
	Now the final statement easily follows from the definition of $ B_R(t) $.
\end{proof}
\subsubsection{Triple conic}The last step is blowing-down the triple conics.
\begin{lemma}\label{lem:intPN3C}
	The intersection cohomology of the GIT quotient $ \PP \calN_x^{R_{C}}/\!\!/R_{C} $ is 
	\[ IP_t(\PP \calN_x^{R_{C}}/\!\!/R_{C})= 1+t^2+2t^4+2t^6+2t^8+2t^{10}+2t^{12}+t^{14}+t^{16}. \]
\end{lemma}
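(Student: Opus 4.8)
The plan is to apply Kirwan's machinery a second time, now to the $8$-dimensional GIT quotient $Y:=\PP\calN_x^{R_{C}}/\!\!/R_{C}$. By Lemma \ref{lem:weights3C} the character of the $T_1$-action on $\calN_x^{R_{C}}$ is $t^{6}+2t^{4}+2t^{2}+2+2t^{-2}+2t^{-4}+t^{-6}$, which is exactly the character of the $\SL(2,\CC)$-representation $V_{6}\oplus V_{4}$, where $V_{n}$ denotes the irreducible of dimension $n+1$; since a representation of $\SL(2,\CC)$ is determined by its character, $\calN_x^{R_{C}}\cong V_{6}\oplus V_{4}$ and hence $Y\cong\PP(V_{6}\oplus V_{4})/\!\!/\SL(2,\CC)$. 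The first step is to compute $P_t^{\SL(2,\CC)}\bigl((\PP^{11})^{ss}\bigr)$ from Theorem \ref{thm:equi}, using the HKKN stratification of $\PP^{11}=\PP(V_{6}\oplus V_{4})$: here the Hilbert diagram is one-dimensional, with weights $(\pm6)\times1$, $(\pm4)\times2$, $(\pm2)\times2$, $(0)\times2$, and starting from $P_t^{\SL(2,\CC)}(\PP^{11})=\frac{1+t^{2}+\dots+t^{22}}{1-t^{4}}$ one subtracts, by the same procedure used for Table \ref{tab:1}, the contributions of the finitely many unstable strata (indexed by the positive weights $2,4,6$), each governed by a residual action of $T_1$ on a projective space of dimension at most $1$.

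The second step is the partial desingularisation of $Y$, following Section \ref{sec:cohomology}. Since $V_{6}\oplus V_{4}$ has no trivial summand, $\SL(2,\CC)$ fixes no point of $\PP^{11}$, so up to conjugacy the only reductive subgroup relevant to Kirwan's process is the maximal torus $T_1\cong\CC^{*}$. One checks via the Hilbert--Mumford criterion that its fixed locus $Z_{T_1}^{ss}$ is the whole projectivised weight-$0$ line $\PP^{1}$, whose generic point has stabiliser exactly $T_1$, whereas the two special points $[x^{3}y^{3}]$ and $[x^{2}y^{2}]$ of this $\PP^{1}$ have stabiliser the full normaliser $N(T_1)$, with $\pi_0N(T_1)=\mm$ acting on $Z_{T_1}^{ss}\cong\PP^{1}$ through the involution $[a:b]\mapsto[-a:b]$ and on $T_1$ by inversion. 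One then forms the single Kirwan blow-up $\widetilde Y$ of $Y$ along the centre $Z_{T_1}^{ss}/\!\!/N(T_1)$; it has only finite quotient singularities, so $IP_t(\widetilde Y)=P_t(\widetilde Y)$, and by Theorem \ref{thm:cohkirblow} one has $P_t(\widetilde Y)=P_t^{\SL(2,\CC)}\bigl((\PP^{11})^{ss}\bigr)+A_{T_1}(t)$. The main part of $A_{T_1}(t)$ uses $P_t^{N(T_1)}(Z_{T_1}^{ss})=\frac{1+t^{2}}{1-t^{4}}=\frac{1}{1-t^{2}}$ and $\rk\calN^{T_1}=11-\bigl(\dim\SL(2,\CC)+1-\dim N(T_1)\bigr)=8$ by (\ref{formula:rank}); the extra part is governed by the $T_1$-representation on the normal slice $\calN_y^{T_1}$ at a generic $y\in Z_{T_1}^{ss}$, whose weights are obtained by deleting from the weights of $\CC^{12}$ the weights $0,0,2,-2$ of the tangent space to $\SL(2,\CC)\cdot Z_{T_1}^{ss}$ at $y$, leaving $(\pm6)\times1$, $(\pm4)\times2$, $(\pm2)\times1$, in particular with no zero weight.

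The third step is the Decomposition Theorem in the form of Theorem \ref{thm:blowdown}, now applied to the blow-down $\widetilde Y\to Y$. Its error term $B(t)$ is built from the centre $Z_{T_1}^{ss}/\!\!/N(T_1)\cong\PP^{1}/\mm\cong\PP^{1}$, which is simply connected with $P_t=1+t^{2}$, and from the GIT quotient $\PP\calN_y^{T_1}/\!\!/T_1$, which is a simplicial projective toric variety of dimension $6$ with no strictly semistable points (all weights of $\calN_y^{T_1}$ being nonzero); hence $IP_t(\PP\calN_y^{T_1}/\!\!/T_1)=P_t^{\CC^{*}}\bigl((\PP^{7})^{ss}\bigr)$ is read off from the HKKN stratification of $\PP^{7}$. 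Assembling $B(t)$ out of these data and their $\pi_0N(T_1)=\mm$-invariant parts, and using $IP_t(Y)=P_t(\widetilde Y)-B(t)$, one arrives at the asserted polynomial $1+t^{2}+2t^{4}+2t^{6}+2t^{8}+2t^{10}+2t^{12}+t^{14}+t^{16}$.

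The main obstacle is the careful bookkeeping required by this nested application of the whole method. The two most delicate points are: first, verifying that the entire weight-$0$ $\PP^{1}$ is semistable and that no $\SL(2,\CC)$-orbit meeting $(\PP^{11})^{ss}$ outside it carries a positive-dimensional stabiliser, so that $T_1$ really is the only centre of the blow-up; and second, propagating the $\mm$-actions coherently through all three computations (equivariant cohomology of the semistable locus, Kirwan blow-up, and blow-down), since it is precisely the passage to $\mm$-invariant parts (inversion on $T_1$, hence $c\mapsto-c$ on $H^{*}(B\CC^{*})$, together with the interchange of the two halves of each HKKN stratification) that trims the naive contributions down to the stated intersection Betti numbers.
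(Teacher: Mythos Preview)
Your proposal is correct and follows essentially the same route as the paper: compute $P_t^{\SL(2,\CC)}((\PP^{11})^{ss})$ via the HKKN stratification, perform the single Kirwan blow-up along the $T_1$-fixed locus to obtain $A_{T_1}(t)$, and then blow down via Theorem \ref{thm:blowdown} to get $B_{T_1}(t)$, arriving at the stated polynomial. Your explicit identification $\calN_x^{R_C}\cong V_6\oplus V_4$ and description of $Z_{T_1}^{ss}$ as the weight-$0$ $\PP^1$ with its $\mm$-action are additional helpful details that the paper leaves implicit, but the argument is structurally the same.
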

\begin{proof} For brevity we write $ R=R_{C}\cong \SL(2, \CC) $ and $ \PP^{11}=\PP \calN_x^{R_{C}} $. From the weights of the slice representation (Lemma \ref{lem:weights3C}) and the usual $ R $-equivariantly perfect stratification (see Theorem \ref{thm:strata} and \ref{thm:equi}) one can compute the equivariant Poincar\'{e} series of the semistable locus
	\[P_t^{R}((\PP^{11})^{ss})=\frac{1+...+t^{22}}{1-t^4}-\frac{t^{12}(1+t^2)+t^{16}(1+t^2)+t^{20}}{1-t^2}. \]
	Unfortunately, the space $ \PP^{11}/\!\!/R $ is not rationally smooth, thus $P_t^R((\PP^{11})^{ss})  $ is a priori neither $ P_t(\PP^{11}/\!\!/R) $ nor $ IP_t( \PP^{11}/\!\!/R) $. The remedy for this is first to blow-up the orbit associated to the subgroup $ T_1:=\{ \diag(t, t^{-1}): t\in \CC^* \}\subset R$, which fixes strictly polystable points. Using the same procedure as before, we obtain a partial desingularization $ \widetilde{\PP^{11}}/\!\!/R $, whose cohomology is related to the $ R $-equivariant cohomology of $ (\PP^{11})^{ss} $ by the error term (see Theorem \ref{thm:cohkirblow})
	\[ A_{T_1}(t)=\frac{1+t^2}{1-t^4}(t^2+...+t^{14})-\frac{1+t^2}{1-t^2}(t^8+t^{10}(1+t^2)+t^{14}). \]
	Hence the cohomology of the Kirwan blow-up is given by:
	\begin{align*}
	P_t(\widetilde{\PP^{11}}/\!\!/R)&= P_t^{R}((\PP^{11})^{ss})+A_{T_1}(t)\\
	&=1+2t^2+4t^4+5t^6+6t^8+5t^{10}+4t^{12}+2t^{14}+t^{16}.
	\end{align*}
	Now by the blowing-down procedure (see Theorem \ref{thm:blowdown}), we need to subtract the error term
	\[ B_{T_1}(t)=t^2+2t^4+3t^6+4t^8+3t^{10}+2t^{12}+t^{14}. \]
	Now the statement follows from $ IP_t( \PP^{11}/\!\!/R)=P_t(\widetilde{\PP^{11}}/\!\!/R)-B_{T_1}(t) $.
\end{proof}
\begin{prop}
	For the group $ R_{C}\cong \SL(2, \CC) $, the error term $ B_{R_{C}}(t) $ is given by
	\begin{align*}
	B_{R_{C}}(t)&= t^2+t^4+2t^6+2t^8+2t^{10}+2t^{12}+t^{14}+t^{16}\\
	&\equiv A_{R_{C}}(t) \ \mod t^{10}.
	\end{align*}
	
\end{prop}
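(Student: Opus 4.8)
The plan is to apply the blow-down formula of Theorem \ref{thm:blowdown} to the last contraction, which collapses the exceptional divisor over the point of the GIT quotient corresponding to triple conics. First I would observe that, since $R_{C}$ has the largest dimension among the subgroups in $\calR$, it is the first locus to be blown up, so ``the appropriate stage of the resolution'' in Theorem \ref{thm:blowdown} is $X^{ss}$ itself and $\hat{Z}_{R_{C}}=Z_{R_{C}}^{ss}=\{3C\}$. As $N^{0}(R_{C})=R_{C}$ fixes $3C$ (Proposition \ref{prop:blow-up}), the quotient $\hat{Z}_{R_{C}}/\!\!/N^{0}(R_{C})$ is a point, hence simply connected, and by the Remark following Theorem \ref{thm:blowdown} the error term collapses to
\[ B_{R_{C}}(t)=\sum_{q}t^{q}\,\dim IH^{\hat{q}_{R_{C}}}\bigl(\PP\calN_{x}^{R_{C}}/\!\!/R_{C}\bigr)^{\pi_{0}N(R_{C})}, \]
where, since $\dim\calN_{x}^{R_{C}}=12$ (Lemma \ref{lem:weights3C}) gives $\dim\PP\calN_{x}^{R_{C}}/\!\!/R_{C}=11-3=8$, one has $\hat{q}_{R_{C}}=q-2$ for $q\le 8$ and $\hat{q}_{R_{C}}=q$ otherwise. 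The intersection Poincar\'{e} polynomial $IP_{t}(\PP\calN_{x}^{R_{C}}/\!\!/R_{C})=1+t^{2}+2t^{4}+2t^{6}+2t^{8}+2t^{10}+2t^{12}+t^{14}+t^{16}$ is already available from Lemma \ref{lem:intPN3C}, so everything hinges on identifying the action of the finite group $\pi_{0}N(R_{C})$ on $IH^{*}(\PP\calN_{x}^{R_{C}}/\!\!/R_{C})$.

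The next step is to show that this action is trivial. By Proposition \ref{prop:normal}, $N(R_{C})=H\rtimes\mm$ with $H^{0}=R_{C}$, so $\pi_{0}N(R_{C})\cong\mm\times\mm$, generated by the class of $((I,-I),1)\in H$ and the class of the ruling-interchanging involution $\tau$. The first generator acts on $H^{0}(\PP^{1}\times\PP^{1},\calO_{\PP^{1}\times\PP^{1}}(3,3))\cong\CC^{16}$ as multiplication by $(-1)^{3}=-1$, hence trivially on the projectivisation of the linear slice $\calN_{x}^{R_{C}}\subset\CC^{16}$ and so trivially on $\PP\calN_{x}^{R_{C}}/\!\!/R_{C}$. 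For $\tau$, the essential observation is that it centralises the diagonally embedded $R_{C}=\SL(2,\CC)$ elementwise and in particular fixes its maximal torus $T_{1}$; therefore $\tau$ is compatible with, and acts trivially on the cohomology of, every object entering the proof of Lemma \ref{lem:intPN3C}: it acts linearly on $\PP^{11}$ and on each linear section $Z_{\beta'}$ of the $R_{C}$-equivariantly perfect stratification, trivially on $H^{*}(BR_{C})$ and on each $H^{*}(B\Stab\beta')$ since it centralises $R_{C}$, and it preserves the auxiliary $T_{1}$-orbit that is blown up to produce $\widetilde{\PP^{11}}/\!\!/R_{C}$. Propagating $\tau$ through the auxiliary blow-up error $A_{T_{1}}(t)$ and blow-down error $B_{T_{1}}(t)$ then shows that $\tau$ acts trivially on $IH^{*}(\PP^{11}/\!\!/R_{C})$.

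Granting this, $IH^{\hat{q}_{R_{C}}}(\PP\calN_{x}^{R_{C}}/\!\!/R_{C})^{\pi_{0}N(R_{C})}=IH^{\hat{q}_{R_{C}}}(\PP\calN_{x}^{R_{C}}/\!\!/R_{C})$, and substituting the values from Lemma \ref{lem:intPN3C} into the displayed formula — the degrees $q=2,4,6,8$ contributing the shifted terms $t^{2}+t^{4}+2t^{6}+2t^{8}$ and the degrees $q\ge 10$ contributing $2t^{10}+2t^{12}+t^{14}+t^{16}$ — gives $B_{R_{C}}(t)=t^{2}+t^{4}+2t^{6}+2t^{8}+2t^{10}+2t^{12}+t^{14}+t^{16}$. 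Reducing modulo $t^{10}$ leaves $t^{2}+t^{4}+2t^{6}+2t^{8}$, which matches $A_{R_{C}}(t)\bmod t^{10}$ as found in Propositions \ref{prop:main3C} and \ref{prop:extra3C}, proving the final assertion.

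The hard part will be the triviality of the $\pi_{0}N(R_{C})$-action on $IH^{*}(\PP\calN_{x}^{R_{C}}/\!\!/R_{C})$: because this intersection cohomology is not obtained directly but only through the further Kirwan desingularisation of $\PP^{11}/\!\!/R_{C}$ along the subtorus $T_{1}\subset R_{C}$, one must carry the finite-group action along that two-step procedure and check it stays trivial on each cohomological contribution (the semistable stratification of $\PP^{11}$, the $T_{1}$-blow-up, and its blow-down). Everything else is bookkeeping with the degree shift $\hat{q}_{R_{C}}$ of Theorem \ref{thm:blowdown}.
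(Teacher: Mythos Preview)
Your proposal is correct and follows the same approach as the paper: both reduce to the fact that $Z_{R_{C}}/\!\!/N(R_{C})$ is a point and that $\pi_{0}N(R_{C})$ acts trivially on $IH^{*}(\PP\calN_{x}^{R_{C}}/\!\!/R_{C})$, then read off the answer from Lemma~\ref{lem:intPN3C} via the degree shift $\hat{q}_{R_{C}}$. The paper dispatches the triviality of the $\pi_{0}N(R_{C})$-action in one line by pointing back to Proposition~\ref{prop:main3C} (where it was observed that the ruling swap fixes the diagonal $\PGL(2,\CC)$), whereas you spell out both generators of $\pi_{0}N(R_{C})\cong\mm\times\mm$ and trace their action through the auxiliary desingularisation of Lemma~\ref{lem:intPN3C}; this is more careful but not a different method.
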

\begin{proof} The result easily follows from the definition of $ B_{R_{C}}(t) $, after noticing that $ Z_{R_{C}}/\!\!/N(R_{C}) $ is a point and the group $ \pi_0 N(R_{C}) $ acts trivially on $ IH^*(\PP \calN_x^{R_{C}}/\!\!/R) $ (cf. Proposition \ref{prop:main3C}), which we computed in Lemma \ref{lem:intPN3C}.
\end{proof}
\subsection{Intersection cohomology of $ M $} We complete the proof of Theorem \ref{thm:intM}.

\begin{proof}[Proof of Theorem \ref{thm:intM}]
From Theorem \ref{thm:blowdown} putting all the previous results together, we obtain that the intersection Hilbert-Poincar\'{e} polynomial of the moduli space of non-hyperelliptic Petri-general genus 4 curves $ M=X/\!\!/G $ is 
\begin{align*}
IP_t(M)&=P_t(\widetilde{M})-\sum_{R\in \calR} B_R(t)\\
&=P_t^G(X^{ss})+\sum_{R\in \calR}(A_R(t)-B_R(t))\\
&\equiv 1+t^2+2t^4+2t^6+4t^8+0-t^8+0 \ \mod t^{10}\\
&\equiv 1+t^2+2t^4+2t^6+3t^8 \ \mod t^{10}.
\end{align*}
\end{proof}

Together with Theorem \ref{thm:cohoblow}, this also completes the proof of the main Theorem \ref{thm:main}.

\begin{remark}\label{rmk:coho}(cf. \cite[3.4]{Kir86})
	As a by-product of our result, we are able to determine the ordinary Betti numbers
	\[ H^i(X/\!\!/G)=IH^i(X/\!\!/G) \ \mathrm{for} \ 12 \leq i\leq 18  \] 
	and
	\[ H^i(X^{s}/G)=IH^i(X/\!\!/G) \ \mathrm{for} \ 0 \leq i\leq 6  \] 
	where $ X^{s}/G=X/\!\!/G\smallsetminus \bigcup_{R\in \calR} Z_R/\!\!/N(R) $ is the orbit space of GIT-stable curves.
\end{remark}
\subsection{Geometric interpretation} In conclusion, we give a geometric interpretation of some Betti numbers of the compactification $ M $, by describing the classes of curves generating the cohomology spaces.

Let $ U\subset M $ be the affine open subset corresponding to smooth non-hyperelliptic Petri-general curves of genus four. Tommasi (\cite[Theorem 1.2]{Tom05}) computed the rational cohomology of $ U $, as geometric quotient of the complement of a discriminant, namely
\[ H^i(U)=\begin{cases}
1 \quad i=0, 5\\
0 \quad \mathrm{otherwise.}
\end{cases} \]
We now consider the Gysin long exact sequence (cf. \cite[\S 19.1 (6)]{Ful98}) associated to the inclusion $ U \hookrightarrow M $:
\begin{equation}\label{gysin}
... \rightarrow H_{k+1}(U)\rightarrow H_{k}(M \smallsetminus U) \rightarrow H_k(M) \rightarrow H_k(U) \rightarrow ...
\end{equation}
where $ H_* $ denotes the rational Borel-Moore homology theory (cf. \cite[Example 19.1.1]{Ful98}).
As $ U $ has at most finite quotient singularities, by Poincar\'{e} duality $ \dim H_{k+1}(U)=1 $ for $ k=12, 17 $ and vanishes in all other degrees. 

The dimensions of $ H_{k}(M\smallsetminus U)\cong H_{k}(X^s/G \smallsetminus U) $, for $ k \geq 12 $, can be also computed from Remark \ref{rmk:coho} via the Gysin sequence related to the inclusion $ U \hookrightarrow X^s/G $. 
Therefore, the geometry of the curves in $ M\smallsetminus U $ suggests the following geometric interpretation of the Betti numbers:
\begin{itemize}
	\item $ H_{18}(M) $ is obviously generated by the fundamental class of $ M $;
	\item $ H_{16}(M) $ is generated by the fundamental class of $ M\smallsetminus U $, i.e. the locus of singular curves;
	\item $ H_{14}(M) $ is generated by the fundamental classes of the following subvarieties of $ M\smallsetminus U $: the closure of the locus of curves with at least two nodes and the closure of the locus of curves with a cusp;
	\item $ H_{12}(M) $ is generated by the fundamental classes of the following subvarieties of $ M\smallsetminus U $: the closure of the locus of curves with at least three points in general position, the locus of reducible curves with a line as component and the closure of the locus of curves with at least a node and cusp. These three classes generate $ H_{12}(M\smallsetminus U)\cong \QQ^3 $, but are linearly dependent in $ H_{12}(M)\cong \QQ^2 $ and the space of relations con be identified with $ H_{13}(U)\cong \QQ $. 
\end{itemize}

Similar (but dual) considerations can be applied to the Betti numbers of the stable quotient $ X^s/G $. The geometric interpretation explained above hence confirms the results about $ IH^i(M) $ for $ i\leq 6 $.

\bibliographystyle{alpha}
\bibliography{References}

\end{document}